\numberwithin{equation}{section}
\newtheorem{proposition}{Proposition}[section]
\newtheorem{definition}{Definition}[section]
\newtheorem{lemma}{Lemma}[section]
\newtheorem{theorem}{Theorem}[section]
\newtheorem{corollary}{Corollary}[section]
\newtheorem{remark}{Remark}[section]
\let\pa=\partial
\let\al=\alpha
\def\C{\mathop{\mathbb C\kern 0pt}\nolimits}
\def\DD{\mathop{\bf D\kern 0pt}\nolimits}
\def\K{\mathop{\bf K\kern 0pt}\nolimits}
\def\N{\mathop{\bf N\kern 0pt}\nolimits}
\def\Q{\mathop{\bf Q\kern 0pt}\nolimits}
\newcommand{\la}{\lambda}
\newcommand{\beq}{\begin{equation}}
\newcommand{\eeq}{\end{equation}}
\newcommand{\ben}{\begin{eqnarray}}
\newcommand{\een}{\end{eqnarray}}
\newcommand{\beno}{\begin{eqnarray*}}
\newcommand{\eeno}{\end{eqnarray*}}
\def\pasdegrille{\let\grille = \pasgrille}
\def\aat#1#2#3{
\divide \dimen1 by 48 \dimen3=\dimen1 \multiply \dimen1 by #1
\advance \dimen1 by -\dimen3 \divide \dimen1 by 101 \multiply
\dimen1 by 100 \divide \dimen2 by \count11 \multiply \dimen2 by #2
\setbox0=\hbox{#3}\ht0=0pt\dp0=0pt
  \rlap{\kern\dimen1 \vbox to0pt{\kern-\dimen2\box0\vss}}\dimen1= \wd1
\dimen2=\ht1}
\def\pasgrille{
\count12= \dimen1 \divide \count12 by 50 \divide \dimen2 by \count12
\count11 =\dimen2 \ \divide \dimen1 by 48
\setlength{\unitlength}{\dimen1} \smash{\rlap{\ }} \dimen1= \wd1
\dimen2=\ht1 }
\def\grille{
\count12= \dimen1 \divide \count12 by 50 \divide \dimen2 by \count12
\count11 =\dimen2 \ \divide \dimen1 by 48
\setlength{\unitlength}{\dimen1}
\smash{\rlap{\graphpaper[1](0,0)(50, \count11)}} \dimen1= \wd1
\dimen2=\ht1 }
\newcommand{\Extend}[5]{\ext@arrow0099{\arrowfill@#1#2#3}{#4}{#5}}
\begin{document}

\title[Scattering for $\dot H^\frac12$-critical wave-Hartree equation]
{Scattering theory for the radial $\dot H^\frac12$-critical wave Equation with a cubic convolution}
\author{Changxing Miao}
\address{Institute of Applied Physics and Computational Mathematics, P. O. Box 8009,\ Beijing,\ China,\ 100088;}
\email{miao\_changxing@iapcm.ac.cn}

\author{Junyong Zhang}
\address{Department of Mathematics and Beijing Key Laboratory on MCAACI, Beijing Institute of Technology, Beijing 100081}
\email{zhang\_junyong@bit.edu.cn}

\author{Jiqiang Zheng}
\address{Universit\'e Nice Sophia-Antipolis, 06108 Nice Cedex 02, France}
\email{zhengjiqiang@gmail.com,  zheng@unice.fr}

\maketitle
\begin{abstract}
In this paper, we study the global well-posedness and scattering for
the wave  equation with a cubic convolution $\partial_{t}^2u-\Delta
u=\pm(|x|^{-3}\ast|u|^2)u$ in dimensions $d\geq4$. We prove that if
the radial solution $u$ with  life-span  $I$ obeys $(u,u_t)\in
L_t^\infty(I;\dot H^{1/2}_x(\R^d)\times\dot H^{-1/2}_x(\R^d))$, then
$u$ is global and scatters.  By the strategy derived from
concentration compactness, we show that the proof of the
global well-posedness and scattering is reduced to disprove the
existence of two scenarios:  soliton-like solution and high to low
frequency cascade.  Making use of the No-waste Duhamel formula and
double Duhamel trick, we
 deduce that these two scenarios enjoy the additional
 regularity by the bootstrap argument of \cite{DL}. This  together with virial analysis  implies
the energy of such  two scenarios is zero and so we get a
contradiction.
\end{abstract}

\begin{center}
 \begin{minipage}{120mm}
   { \small {\bf Key Words}: wave-Hartree equation;  scattering theory; concentration compactness
      {}
   }\\
    { \small {\bf AMS Classification:}
      { Primary 35P25.~ Secondary 35B40, 35Q40.}
      }
 \end{minipage}
 \end{center}

\section{Introduction}
\setcounter{section}{1}\setcounter{equation}{0} This paper is
devoted to the study of the Cauchy problem of the energy-subcritical
wave equation with a cubic convolution
\begin{align} \label{equ1.1}
\begin{cases}    \partial_{t}^2u-\Delta u+f(u)=0,  \qquad  (t,x) \in
\mathbb{R}\times\mathbb{R}^d,~~d\geq4\\
(u,u_t)(t_0,x)=(u_0,u_1)(x)\in \dot H^{1/2}(\R^d)\times\dot
H^{-1/2}(\R^d),\end{cases}
\end{align}
where $f(u)=\mu(V(\cdot)\ast|u|^2)u$ with $V(x)=|x|^{-3}$, $\mu=\pm
1$ with $\mu=1$ known as the defocusing case and $\mu=-1$ as the
focusing case. Here $u$ is a real-valued function defined in
$\mathbb{R}^{d+1}$, $\Delta$ is the Laplacian in $\mathbb{R}^{d}$,
$V(x)$ is called the potential, and $*$ denotes the spatial
convolution in $\mathbb{R}^{d}$. Especially for $d=5$,
$V(x)=|x|^{-3}$ is the Newtonian potential.

If the solution $u$ of \eqref{equ1.1} has sufficient decay at
infinity and smoothness, it conserves energy:
\begin{equation*}
E(u,u_t)=\frac12\int_{\mathbb{R}^d}\big(|\pa_tu|^2+|\nabla
u|^2\big)\mathrm{d}x+\frac\mu4\iint_{\mathbb{R}^d\times\mathbb{R}^d}\frac{|u(t,x)|^2|u(t,y)|^2}{|x-y|^{3}}\mathrm{d}x\mathrm{d}y=E(u_0,u_1).
\end{equation*}

The  equation \eqref{equ1.1} is  invariant under the scaling
\begin{equation}\label{scale}
(u,u_t)(t,x)\mapsto \big(\la^{\frac{d-1}2}\ u(\la t,\la
x),\la^{\frac{d+1}2}\ u_t(\la t,\la x)\big), ~~\forall~\la>0.
\end{equation}
One can verify that the only homogeneous $L^2_x$-based Sobolev space
that is left invariant under \eqref{scale} is  $\dot
H^\frac12(\R^d)\times\dot H^{-\frac12}(\R^d)$. And so, it is natural
to consider the Cauchy problem with initial data $(u_0,u_1)(x)\in
 \dot{H}^\frac12(\R^d)\times\dot H^{-\frac12}(\R^d)$. We will prove that any
radial maximal-lifespan solution that remains uniformly bounded in
 $\dot H^\frac12(\R^d)\times\dot H^{-\frac12}(\R^d)$ must be global and
scatter.

First, we introduce some definitions and background materials.
\begin{definition}[Strong solution]\label{def1.1}  Let $I$ be a nonempty time interval including $t_0$.
 A function $u:~I\times\R^d\to\R$ is a strong solution to problem \eqref{equ1.1},
if
$$(u,u_t)\in C_t^0(J;\dot H^\frac12(\R^d)\times\dot H^{-\frac12}(\R^d), \;\text{and}\; u\in
L_{t,x}^\frac{2(d+1)}{d-1}(J\times\R^d)$$
for any compact $J\subset
I$ and for each $t\in I$ such that
\begin{equation}\label{duhamel}
{u(t)\choose \dot{u}(t)} = V_0(t-t_0){u_0(x) \choose u_1(x)}
-\int^{t}_{t_0}V_0(t-s){0 \choose f(u(s))} \mathrm{d}s,
\end{equation}
where
\begin{equation}\label{v0tdefin}
V_0(t) = {\dot{K}(t)\quad K(t) \choose \ddot{K}(t)\quad \dot{K}(t)},
\quad K(t)=\frac{\sin(t\omega)}{\omega},\quad
\omega=\big(-\Delta\big)^{1/2},
\end{equation} and the dot denotes the time derivative.
 We refer to the interval $I$ as
the lifespan of $u$. We say that $u$ is a maximal-lifespan solution
if the solution cannot be extended to any strictly large interval.
We say that $u$ is a global solution if $I=\R.$
\end{definition}

The solution lies in the space $
L_{t,x}^\frac{2(d+1)}{d-1}(I\times\R^d)$ locally in time is natural
since by Strichartz estimate in  Lemma \ref{strichartzest} below,
the linear flow always lies in this space.  We define the scattering
size of a solution $u$ to problem \eqref{equ1.1} on a time interval
$I$ by
\begin{equation}\label{scattersize1.1}
 \|u\|_{S(I)}:=\|u\|_{
L_{t,x}^\frac{2(d+1)}{d-1}(I\times\R^d)}=:S_I(u).
\end{equation}
Standard arguments show that if a solution $u$ of problem \eqref{equ1.1} is
global, with $S_{\R}(u)<+\infty$, then it scatters, i.e.  there
exist unique
$(v_0^\pm,v_1^\pm)\in\dot H^\frac12(\R^d)\times\dot H^{-\frac12}(\R^d)$ such
that
\begin{equation}\label{1.2.1}
\bigg\|{u(t)\choose \dot{u}(t)}-V_0(t){v_0^\pm\choose
v_1^\pm}\bigg\|_{\dot H^\frac12(\R^d)\times\dot H^{-\frac12}(\R^d)}
\longrightarrow 0,\quad \text{as}\quad t\longrightarrow \pm\infty.
\end{equation}

\vskip0.1cm

 The notion closely associated with
scattering is the definition of blowup:
\begin{definition}[Blowup]\label{def1.2.1} We call that a  maximal-lifespan solution
$u:I\times\R^d\to\mathbb{R}$ of problem \eqref{equ1.1} blows up forward in time if there
exists a time $\tilde{t}_0\in I$ such that $\|u\|_{S([\tilde{t}_0,\sup
I))}=+\infty$. Similarly, $u(t,x)$ blows up backward in time if there
exists a time $\tilde t_0\in I$ such that
$\|u\|_{S(\inf I,\tilde{t}_0]}=+\infty$.
\end{definition}

Now we state our main result.

\begin{theorem}\label{theorem}
Assume that $d\geq4$. Let $u:~I\times\R^d\to\mathbb{R}$ be a radial
maximal-lifespan solution to problem \eqref{equ1.1} such that
\begin{equation}\label{assume1.1}
\big\|(u,u_t)\big\|_{L_t^\infty(I;
\dot H^\frac12(\R^d)\times\dot H^{-\frac12}(\R^d))}<+\infty.
\end{equation} Then the solution $u$ is global and scatters in the sense of \eqref{1.2.1}.
\end{theorem}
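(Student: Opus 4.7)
The plan is to run the concentration-compactness and rigidity program of Kenig-Merle in the critical space $\dot H^{\frac12}\times\dot H^{-\frac12}$. First I would set up small-data theory and long-time perturbation for \eqref{equ1.1} using the Strichartz estimate (Lemma~\ref{strichartzest}) together with Hardy-Littlewood-Sobolev to control the Hartree nonlinearity $f(u)=\mu(|x|^{-3}\ast|u|^2)u$. This yields a scattering criterion: global existence with $S_{\mathbb{R}}(u)<\infty$ is equivalent to scattering. Assuming the theorem fails, one obtains a critical value of the $L_t^\infty(\dot H^{\frac12}\times\dot H^{-\frac12})$-norm below which the result holds and at which a nonzero minimal counterexample $u_c$ exists. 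Via a radial linear profile decomposition for the free wave propagator and a Palais-Smale compactness argument, $u_c$ can be taken to have precompact trajectory modulo the scaling symmetry \eqref{scale}; radiality kills the translation parameter, leaving only a frequency function $N(t)$.

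After excluding the finite-time/self-similar scenario (using boundedness of $N(t)$ on compact time intervals from the local theory), I would reduce the remaining enemies to the two listed in the abstract: a soliton-like $u_c$ with $N(t)\equiv 1$, and a high-to-low cascade with $N(t_n)\to 0$ along some $t_n\to\sup I$. For each, almost periodicity forces the no-waste Duhamel identity
\[
{u(t)\choose \dot u(t)}=-\int_t^{\sup I}V_0(t-s){0\choose f(u(s))}\,\mathrm{d}s
\]
as a weak limit, and the symmetric version holds backward in time. Pairing these two representations (the double Duhamel trick) and estimating the resulting double integral by Strichartz inequalities combined with Hardy-Littlewood-Sobolev bounds on the potential yields an initial $\varepsilon$-regularity gain. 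Feeding this gain back into the nonlinear estimates and iterating via the Dodson-Lawrie bootstrap~\cite{DL}, I would upgrade $(u_c,\partial_t u_c)$ through a chain of Sobolev spaces until it lies in the energy space $\dot H^1\times L^2$, so that $E(u_c,\partial_t u_c)$ is well-defined and finite.

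The concluding rigidity step combines this regularity with virial/Morawetz-type identities. Differentiating $\int x\cdot\nabla u\,\partial_t u\,\mathrm{d}x$ and exploiting the compactness of the trajectory to eliminate boundary terms, one extracts that the kinetic and potential energy of $u_c$ both vanish, forcing $u_c\equiv 0$ and contradicting the nontriviality of the critical element. The principal technical obstacle is the double Duhamel step: the nonlocal factor $|x|^{-3}\ast|u|^2$ intertwines frequencies in a way that a purely local nonlinearity does not, so the bilinear spacetime estimate one must prove in order for the bootstrap to close all the way up to $\dot H^1\times L^2$ requires a careful interplay between the dispersive decay of the wave propagator, the Hardy-Littlewood-Sobolev inequality, and the radial assumption. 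Once this estimate is in hand, the remaining steps follow the template of~\cite{DL} without essential new difficulty.
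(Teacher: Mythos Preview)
Your outline matches the paper's strategy almost exactly: local theory and stability, reduction via profile decomposition to a radial almost-periodic critical element, the two-enemy classification (soliton-like and high-to-low cascade), the no-waste/double Duhamel regularity bootstrap \`a la Dodson--Lawrie, and a virial identity to conclude. Two points, however, need sharpening.

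First, for the soliton-like scenario your rigidity step assumes you can ``exploit the compactness of the trajectory to eliminate boundary terms'' in the virial computation. But the trajectory is a priori precompact only in $\dot H^{1/2}\times\dot H^{-1/2}$; the cutoff virial identity (Lemma~\ref{vir3drw}) produces error terms controlled by tails of $\int_{|x|\geq R}(|u_t|^2+|u_r|^2)\,r^{d-1}\,dr$, and to make these small uniformly in $t$ you need precompactness in $\dot H^{1}\times L^{2}$, not merely a uniform bound there. The paper obtains this (Proposition~\ref{propgdzzx}) by running the double-Duhamel bootstrap \emph{one step further}, proving a uniform $\dot H^{2}\times\dot H^{1}$ bound, and then interpolating with the $\dot H^{1/2}$-compactness. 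Your plan stops the bootstrap at $\dot H^{1}\times L^{2}$, which is not quite enough.

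Second, the virial output is not that kinetic and potential energy vanish separately, but that the total conserved energy $E(u,u_t)$ equals zero. In the defocusing case this forces $u\equiv 0$ immediately; in the focusing case one must invoke the separate fact (Proposition~\ref{propbnpe}) that nontrivial solutions with $E\leq 0$ blow up in finite time in both directions, contradicting $I=\mathbb{R}$. The same mechanism handles the cascade: the quantitative bound $\|(u,u_t)\|_{\dot H^1\times L^2}\lesssim N(t)^{1/3}$ from Theorem~\ref{regular} drives the energy to zero along a subsequence, and conservation plus Proposition~\ref{propbnpe} finishes it.
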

\begin{remark} We here focus on the $\dot H^{\frac12}$-critical problem, which is corresponding to $V(x)=|x|^{-\gamma}$ with $\gamma=3$.
It is a natural assumption on the dimension $d>\gamma=3$ for the
convolution in the nonlinearity. This is why we are restricted to
$d\geq 4$. The radial assumption comes from the technique issue
improving the regularity in Section 4 below.
\end{remark}

The impetus to consider this problem stems from a series of recent
works for the energy-critical, energy-supercritical and
energy-subcritical nonlinear wave equation. To be more precise, let us recall the results for the
 nonlinear wave equation(NLW)
 \begin{equation}\label{waveequ}
u_{tt}-\Delta u+\mu|u|^pu=0,~(t,x)\in\R\times\R^d.
 \end{equation}
 The equation \eqref{waveequ} is
called energy-critical if $p=\tfrac4{d-2}$, energy-supercritical if
$p>\tfrac4{d-2}$ and energy-subcritical if $p<\tfrac4{d-2}$. The
energy-critical equations have been the most widely studied
instances of NLW, since the rescaling
\begin{equation}
(u,u_t)(t,x)\mapsto~\big(\la^\frac2pu(\la t,\la
x),\la^{\frac2p+1}u_t(\la t,\la x)\big), \;\;p=\frac4{d-2}
\end{equation}
leaves invariant the energy of solutions, which is defined by
$$E(u,u_t)=\frac12\int_{\R^d}\big(|\pa_tu|^2+|\nabla
u|^2\big)\mathrm{d}
x+\frac{\mu}{p+2}\int_{\R^d}|u|^{p+2}\mathrm{d}x,$$ and is a
conserved quantity for equation\eqref{waveequ}.  For the defocusing
energy-critical NLW \eqref{waveequ}, Grillakis \cite{Gri90} proved that the Cauchy problem of equation \eqref{waveequ} with the  $\dot
H^1(\mathbb R^3)\times L^2(\mathbb R^3)$ intial data is  global well posedness, we  refer the readers to \cite{BG},  \cite{Kapi94},
\cite{ShaStr94} and  \cite{T07} for the scattering theory and the high dimensional case. In particular, Tao
derived a exponential type spacetime bound in \cite{T07}. In the
above papers, their methods rely heavily on the classical finite speed of
propagation (i.e. the monotonic local energy estimate on the light
cone)
\begin{align}\label{}
\int_{|x|\leq R-t}e(t,x)dx\leq \int_{|x|\leq R }e(0,x)dx,~~t>0
\end{align}where\begin{align}\label{}
  e(t,x):=\frac12  | u_t
 |^2  + \frac12  | \nabla u  |^2   + \frac{d-2}{2d} |
u|^\frac{2d}{d-2},
\end{align}
and Morawetz estimate
\begin{eqnarray}\label{morawetzes}
\int_{\R}\int_{\R^d}\frac{|u|^{\frac{2d}{d-2}}}{|x|} dxdt\leq
C\big(E(u_0,u_1)\big).
\end{eqnarray}
However, the Morawetz estimate fails for the focusing energy-critical NLW.
 Kenig and Merle \cite{KM1} first employed
sophisticated ``concentrated compactness + rigidity method" to
obtain the dichotomy-type result under the assumption that $E (u_0,
u_1) < E (W, 0)$, where $W$ denotes the ground state of the elliptic
equation
$$\Delta W+|W|^\frac4{d-2}W=0.$$ The
analogs for the  focusing energy-critical nonlinear Schr\"{o}dinger
equation in the radial case for dimensions $3$ and $4$ have also been
established by Kenig and Merle \cite{KM}. Thereafter,  Bulut et.al \cite{BCLPZ}
extended the above result in \cite{KM1} to higher dimensions. While we
refer the readers to Killip and Visan \cite{KV} for the focusing energy-critical
Schr\"odinger equation in high dimensions. This was proven by making use of minimal counterexamples derived from the
concentration-compactness approach to induction on energy.

For the $\dot H^{s_c}$-critical NLW \eqref{waveequ} with
$p\neq\tfrac4{d-2}$, both  the Morawetz estimate and energy
conservation fails. It is hard to prove  global well posedness and
scattering of equation \eqref{waveequ}
 in  the space $\dot H^{s_c}\times \dot{H}^{s_c-1}$. Up to now, we do not know  how to
treat the large-data case  since there does not exist  any a priori control of a
critical norm. The first result in this
direction is due to Kenig-Merle \cite{KM2010}, where they studied
the $\dot H^\frac12$-critical Schr\"odinger equation in $\R^3$. For
the defocusing energy-supercritical NLW in odd dimensions, Kenig and
Merle \cite{KM2011,KM2011D} proved that if the radial solution $u$
is apriorily bounded in the critical Sobolev space, that is
$$(u,u_t)\in L_t^\infty(I; \dot{H}^{s_c}_x(\R^d)\times
\dot{H}^{s_c-1}_x(\R^d)), \quad s_c:=\tfrac{d}2-\tfrac2p>1,$$ then $u$
is global and scatters.  Later, Killip and Visan \cite{KV} showed
the result in $\R^3$ for the non-radial solutions by making use of
Huygens principal and so called ``localized double Duhamel trick".
We refer to \cite{Blut2012,KV2011,MWZ} for some high dimensional
cases.
 Recently, Duyckaerts, Kenig and Merle \cite{DKM} obtain such result
 for the focusing energy-supercritical NLW with radial solution
in three dimension. Their proof relies on the compactness/rigidity
method, pointwise estimates on compact solutions obtained in
\cite{KM2011}, and the channels of energy method  developed in \cite{DKM2011}.
Furthermore, by exploiting the double Duhamel trick, Dodson and
Lawrie\cite{DL14} extend the result in \cite{DKM} to dimension five.
We also refer reader to \cite{KV2010,MMZ,Mur} for the defocusing
energy-supercritical Schr\"odinger equation.

 The methods employed in the energy-supercritical NLW
 also lead to the study of the energy-subcritical
NLW. In fact, using  the channels of energy method pioneered in
\cite{DKM2012,DKM}, Shen \cite{Shen} proved the analog result of
\cite{KM2011} for $2<p<4$ with $d=3$ in both defocusing and focusing
case. However, the channels of energy  method does not work so
effectively for $p\leq2$. Recently, by virial based rigidity
argument and improving addition regularity for the minimal
counterexamples,  Dodson and Lawrie \cite{DL} extended the result of
\cite{Shen} to $\sqrt{2}<p\leq2$. The problem of having monotonicity
formulae at a different regularity to the critical conservation laws
is a difficulty intrinsic to the nonlinear wave equation. In order
to enable to utilize the monotonicity formulae, one need improve the
regularity for the almost periodic solutions. In \cite{DL,Shen},
they use the double Duhamel trick to show that almost periodic
solutions belong to energy space $\dot H^1_x(\R^3)\times
L^2_x(\R^3)$. The main difficult is that the decay rate of the
linear solution is not enough to guarantee the double Duhamel
formulae converges. However, the weighted decay available from
radial Sobolev embedding can supply the additional decay to
guarantee the double Duhamel formulae converges. Thus, one need the
radial assumption in \cite{DL,Shen}.
 This is different
 from cubic Schr\"odinger equation \cite{KM2010}, where no radial assumption is made. This is due to the Lin-Strauss Morawetz inequality
$$\int_{I}\int_{\R^{3}}\frac{|u(t,x)|^4}{|x|}\;dx\;dt\lesssim\sup_{t\in I}\|u(t,x)\|_{\dot H^{\frac12}_x(\R^3)}^2,$$
which is scaling critical with the cubic Schr\"odinger equation.

For the Cauchy problem of the nonlocal NLW \eqref{equ1.1} with
$V(x)=|x|^{-\gamma}$, making use of  the argument developed by  Strauss \cite{St81a,St81b}
and Pecher \cite{Pe85}, Mochizuki \cite{Mo89} showed that if $d \geq
3$, $2\leq \gamma < \min\{d, 4\}$, then the global well-posedness
and scattering results with small initial data hold in the energy
space $H^1(\R^d)\times L^2(\R^d)$. For the large initial data, it is difficult since
 there are  the absence of the
classical finite speed of propagation for  \eqref{equ1.1}  and the positive properties $G(u)>0$ with
$$G(u)=f(u){\bar u}-2\int_0^{|u|}f(r)dr, \; f(u)=(V*|u|^2)u, \; V(x)=|x|^{-\gamma}.$$
It plays an important role in establishing the classical
 Morawetz-type estimates
 in \cite{Na99b}.
Hence, one can not utilize
the classical methods in \cite{Gri90, Kapi94,ShaStr94} to prove the
GWP  scattering for the defocusing energy-critical wave equation \eqref{equ1.1} with $\gamma=4$.
 While, inspired by the strategy derived from
concentration compactness \cite{KM,KM1} and the new-type  Morawetz-type estimate in
 \cite{Na99e}, the authors in
\cite{MZZ} showed GWP and scattering result simultaneously for the
defocusing energy-critical wave-Hartree equation \eqref{equ1.1} with
$V(x)=|x|^{-4}$.

 In this paper, we continue the investigations carried out in \cite{MZZ} concerning the long-time behavior of the
 solution of wave-Hartree, but for $\dot H^{1/2}$-critical wave-Hartree (that is, $\gamma=3$) in both defocusing and focusing case.
  Compared with \cite{MZZ},
the $\dot H^{1/2}$-critical problem is much more difficult due to
the failure of energy conservation law and the Morawetz estimate. By
using compactness approach, modifying the argument of improving
addition regularity as in \cite{DL} and employing the symmetries of
the non-local nonlinearity $(|x|^{-3}\ast|u|^2)u$, we prove that if
the radial solution $u$ with life-span  $I$ satisfies $(u,u_t)\in
L_t^\infty(I;\dot H^\frac12(\R^d)\times\dot H^{-\frac12}(\R^d))$, then
$u$ is global and scatters. \vspace{0.2cm}

Now, let us turn to an outline of the arguments establishing Theorem
\ref{theorem}.

\vskip 0.2in

\noindent{\bf $\bullet$ The outline of the proof of Theorem \ref{theorem}.}
Before we can address the global-in-time theory for the problem
\eqref{equ1.1}, we need to have a good local-in-time theory in
place. In particular, we have the following

\begin{theorem}[Local well-posedness]\label{local}
Let $d\geq4$. Assume that  $(u_0,u_1)\in \dot H^\frac12(\R^d)\times\dot H^{-\frac12}(\R^d)$ and $t_0\in\R$, then there
exists a unique maximal-lifespan solution $u: I\times\R^d\to\R$ to problem \eqref{equ1.1} with initial data
$\big(u_0,u_1\big)$. This solution also
has the following properties:
\begin{enumerate}
\item {\rm(Local existence)}\; $I$ is an open neighborhood of $t_0$.

\item {\rm(Blow up criterion)}\; If\ $\sup (I)$ is finite, then $u$ blows up
forward in time in the sense of Definition \ref{def1.2.1}. If $\inf
(I)$ is finite, then $u$ blows up backward in time.

\item {\rm(Scattering)}\; If $\sup (I)=+\infty$ and $u$ does not blow up
forward in time, then $u$ scatters forward in time in the sense
\eqref{1.2.1}. Conversely, given $(v_0^+,v_1^+)\in \dot H^\frac12(\R^d)\times\dot H^{-\frac12}(\R^d)$ there is a unique solution
to problem \eqref{equ1.1} in a neighborhood of infinity such that
\eqref{1.2.1} holds.

\item There
exists a $\delta=\delta(d,\|(u_0,u_1)\|_{\dot{H}^{\frac12}\times
\dot{H}^{-\frac12}})$ such that if
$$\big\|\dot{K}(t-t_0)u_0 +
K(t-t_0)u_1\big\|_{S(I)}<\delta,$$ then, there exists a unique
solution $u:~I\times\mathbb{R}^d\to\R$ to the equation in \eqref{equ1.1} with initial data $(u(t_0),\dot{u}(t_0))$ such that
$(u,\dot{u})\in C(I;\dot H^\frac12(\R^d)\times\dot H^{-\frac12}(\R^d)),$ and
\begin{equation*}
\|u\|_{S(I)}\leq 2\delta,~ \|u\|_{S^\frac12(I)}<+\infty,
\end{equation*}
where $\|u\|_{S(I)}$ is defined in \eqref{scattersize1.1} and
$$\|u\|_{S^\frac12(I)}:=\sup_{(q,r)\in\Lambda_0}\big\{\big\||\nabla|^su\big\|_{L_t^q(I;L_x^r)}:~
\tfrac1q+\tfrac{d}r=\tfrac{d}2-\big(\tfrac12-s\big),~0\leq
s\leq\tfrac12\big\},$$ with $\Lambda_0$ being defined in Definition
2.1 below.

\item {\rm(Small data implying scattering)}\; If
$\big\|(u_0,u_1)\big\|_{\dot{H}^{\frac12}\times \dot{H}^{-\frac12}}$ is
sufficiently small, then $u$ is a global solution which does not
blow up either forward or backward in time,  and satisfies that
$$\|u\|_{S(\R)}\lesssim\big\|(u_0,u_1)\big\|_{\dot{H}^{\frac12}\times \dot{H}^{-\frac12}}.$$
\end{enumerate}
\end{theorem}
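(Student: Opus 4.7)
The plan is to derive Theorem~\ref{local} from a standard Strichartz-based contraction mapping applied to the Duhamel reformulation \eqref{duhamel}, with all analytic content concentrated in a single nonlinear estimate on the cubic-convolution nonlinearity $f(u) = \mu(|x|^{-3}\ast|u|^2)u$. The scaling-critical wave-Strichartz pair $(q,r) = \bigl(\tfrac{2(d+1)}{d-1},\tfrac{2(d+1)}{d-1}\bigr)$ is admissible precisely when $d\ge 4$, which is where the dimensional restriction enters.

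First I would prove item (4), from which (5) is an immediate corollary: the Strichartz estimate bounds $\|\dot K(t-t_0)u_0+K(t-t_0)u_1\|_{S(\R)}$ by the $\dot H^{1/2}\times\dot H^{-1/2}$ norm of the data, so smallness of the data forces smallness of the free evolution and (4) applies with $I=\R$. For (4) itself, I would define the solution map
$$\Phi(u)(t) = \dot K(t-t_0)u_0 + K(t-t_0)u_1 - \int_{t_0}^t K(t-s)\,f(u(s))\,ds,$$
and on the complete metric ball
$$X = \bigl\{u : \|u\|_{S(I)} \le 2\delta,\ \|u\|_{S^{1/2}(I)} \le C\|(u_0,u_1)\|_{\dot H^{1/2}\times\dot H^{-1/2}}\bigr\}$$
equipped with the $\|\cdot\|_{S(I)}$-distance, show that $\Phi$ is a contraction once $\delta$ is small, and take the unique fixed point.

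The engine of the whole theory is a trilinear estimate of the schematic form
$$\bigl\||\nabla|^s f(u)\bigr\|_{\text{(dual Strichartz)}} \lesssim \|u\|_{S(I)}^2\, \bigl\||\nabla|^s u\bigr\|_{S(I)}, \qquad 0\le s \le \tfrac12,$$
proved by applying Hardy--Littlewood--Sobolev to the factor $|x|^{-3}\ast|u|^2$ (which forces $d>3$ through the HLS exponent constraint $\tfrac1q-\tfrac1r=\tfrac{d-3}{d}$), distributing the remaining $u$ factors by H\"older in space, and moving $|\nabla|^s$ onto a single factor of $u$ via a fractional Leibniz rule. A parallel bilinear difference estimate bounds $f(u)-f(v)$ by $\bigl(\|u\|_{S(I)}^2+\|v\|_{S(I)}^2\bigr)\|u-v\|_{S(I)}$ and yields the contraction, while the $s=\tfrac12$ case delivers the $S^{1/2}(I)$ bound for the fixed point.

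Items (1)--(3) then follow by now-standard manipulations. For (1), I would partition any open $I\ni t_0$ into finitely many subintervals on which $\|\dot K(t-t_0)u_0+K(t-t_0)u_1\|_{S}<\delta$, apply (4) on each, and concatenate; the maximal-lifespan solution is the union of all such local solutions through $t_0$, and is open by construction. Item (2) is the contrapositive of continuation: if $\|u\|_{S([\tilde t_0,\sup I))}<\infty$, then the free evolution of $(u(T),\dot u(T))$ has small $S$-norm for $T$ near $\sup I$, permitting extension past $\sup I$ and contradicting maximality. For (3), once $\|u\|_{S([t_0,\infty))}<\infty$, a Strichartz plus nonlinear-estimate computation shows that $V_0(-t)(u(t),\dot u(t))$ is Cauchy in $\dot H^{1/2}\times\dot H^{-1/2}$ as $t\to+\infty$, supplying the scattering state; conversely a prescribed $(v_0^+,v_1^+)$ is realized by running (4) backwards on $[T,\infty)$ with $T$ so large that $\|V_0(t)(v_0^+,v_1^+)\|_{S([T,\infty))}<\delta$. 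The main obstacle throughout is the nonlinear estimate of the preceding paragraph: one must simultaneously match the HLS exponents on the convolution, H\"older exponents for the product, and wave-admissibility for the Strichartz pair, all while accommodating $|\nabla|^{1/2}$ via a fractional Leibniz rule; should the target dual pair fail to be sharp admissible, a Christ--Kiselev argument with an off-diagonal admissible pair must be invoked. This exponent bookkeeping is the only real technicality and is ultimately what forces the restriction $d\ge 4$.
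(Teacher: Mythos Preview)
Your proposal is correct and takes the same approach the paper indicates: the paper simply states that Theorem~\ref{local} ``follows from Strichartz estimate in Lemma~\ref{strichartzest} below and the standard fixed argument in \cite{cwI},'' and your contraction-mapping sketch with the Hardy--Littlewood--Sobolev/H\"older/Leibniz treatment of the convolution nonlinearity is exactly that standard argument fleshed out. One small correction: the diagonal pair $(q,r)=\bigl(\tfrac{2(d+1)}{d-1},\tfrac{2(d+1)}{d-1}\bigr)$ is sharp wave-admissible for every $d\ge 2$, not only $d\ge 4$; the restriction $d\ge 4$ enters solely through Hardy--Littlewood--Sobolev on $|x|^{-3}\ast|u|^2$ (which needs $3<d$), as you yourself note in the following paragraph.
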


Theorem \ref{local} follows from Strichartz estimate in Lemma
\ref{strichartzest} below and the standard fixed argument in
\cite{cwI}. Closely related to the continuous dependence on the
data, an essential tool for concentration compactness
 arguments is the following stability theory.
\begin{lemma}[Stability]\label{long}  Let $I$ be a
time interval, and let $\tilde{u}$ be a function on $I\times \R^d$
which is  a near-solution to problem \eqref{equ1.1}  in the sense that
\begin{equation}\label{near}
\tilde{u}_{tt}-\Delta \tilde u=-f(\tilde{u})+e
\end{equation}
for some function $e$. Assume that
\begin{equation}\label{eq2.20}\begin{aligned}
\|\tilde{u}\|_{S(I)}\leq M,\\
\big\|(\tilde{u},\partial_{t}\tilde{u})\big\|_{L^\infty_t (I;\dot
H^{\frac12}\times\dot H^{-\frac12})}\leq E
\end{aligned}
\end{equation}
for some constant $M,E>0$, where $S(I)$ is defined in
\eqref{scattersize1.1}. Let $t_0\in I$, and let
$(u(t_0),u_t(t_0))\in \dot H^\frac12(\R^d)\times\dot H^{-\frac12}(\R^d)$ be close to
$(\tilde{u}(t_0),\tilde{u_t}(t_0))$ in the sense that
\begin{equation}\label{eq2.21}
\big\|\big(u(t_0)-\tilde{u}(t_0),u_t(t_0)-\tilde{u}_t(t_0)\big)\big\|_{\dot
H^{\frac12}\times \dot H^{-\frac12}}\leq\epsilon
\end{equation}
and assume also that the error term obeys
\begin{equation} \label{eq2.22}
\begin{split}
\|e\|_{L_{t,x}^\frac{2(d+1)}{d+3}(I\times\R^d)}\leq \epsilon
\end{split}
\end{equation}
for some small $0<\epsilon<\epsilon_1=\epsilon_1( M, E)$. Then, we
conclude that there exists a solution $u:~I\times\R^d\to\R$ to problem
\eqref{equ1.1} with initial data $(u(t_0),u_t(t_0))$ such that
\begin{equation}\label{eq2.23}
\begin{aligned}
\|u-\tilde{u}\|_{L_t^q(I,L_x^r)}\leq & C(M,E),\\
\|u\|_{L_t^q(I,L_x^r)}\leq & C(M,E),\\
\|u-\tilde{u}\|_{S(I)}\leq & C(M,E)\epsilon^c,
\end{aligned}
\end{equation}
where constant $c=c(d, M, E)>0$  and
$(q,r)$ is admissible pair and satisfies
$\frac1q+\frac{d}r=\frac{d}2-\frac12$.
\end{lemma}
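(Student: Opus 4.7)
The plan is to treat the difference $w = u - \tilde u$ as the main unknown. It satisfies
\begin{equation*}
w_{tt} - \Delta w = -\bigl(f(u) - f(\tilde u)\bigr) - e, \qquad (w,w_t)(t_0) = (u-\tilde u,\, u_t - \tilde u_t)(t_0),
\end{equation*}
so the natural strategy is to combine the Strichartz estimate of Lemma~\ref{strichartzest} on short subintervals with a bootstrap on $\|w\|_{S(\cdot)}$, and then chain the resulting short-time bounds across $I$.

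First, using $\|\tilde u\|_{S(I)} \leq M$, I would partition $I$ into $N = N(M,\eta)$ consecutive subintervals $I_1,\dots,I_N$ such that $\|\tilde u\|_{S(I_j)} \leq \eta$ on each piece, where $\eta = \eta(d,E)$ is to be chosen small. On an individual subinterval $I_j$, applying Strichartz to the Duhamel formula for $w$ yields
\begin{equation*}
\|w\|_{S(I_j)} + \bigl\|(w,w_t)\bigr\|_{L_t^\infty(I_j;\dot H^{1/2}\times\dot H^{-1/2})} \lesssim \bigl\|(w,w_t)(t_{j-1})\bigr\|_{\dot H^{1/2}\times\dot H^{-1/2}} + \bigl\|f(u)-f(\tilde u)\bigr\|_{L_{t,x}^{2(d+1)/(d+3)}(I_j\times \mathbb{R}^d)} + \epsilon.
\end{equation*}
Expanding
\begin{equation*}
f(u) - f(\tilde u) = \bigl(|x|^{-3}\ast (w\,(u+\tilde u))\bigr) u + \bigl(|x|^{-3}\ast |\tilde u|^2\bigr) w,
\end{equation*}
and applying the Hardy--Littlewood--Sobolev inequality together with H\"older (in $x$ and in $t$), one obtains, once Strichartz admissibility is verified,
\begin{equation*}
\bigl\|f(u) - f(\tilde u)\bigr\|_{L_{t,x}^{2(d+1)/(d+3)}(I_j)} \lesssim \bigl(\|\tilde u\|_{S(I_j)}^2 + \|w\|_{S(I_j)}^2\bigr)\,\|w\|_{S(I_j)}.
\end{equation*}
Since $\|\tilde u\|_{S(I_j)} \leq \eta$, the cross term $\eta^2\|w\|_{S(I_j)}$ can be absorbed into the left side, and a standard continuity/bootstrap argument closes the bound on $I_j$ provided the incoming data and $\epsilon$ are sufficiently small.

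Iterating across the $N$ subintervals, the bound at the end of $I_{j-1}$ becomes the data size at the start of $I_j$, so the total error grows by a multiplicative factor depending on $M$ and $E$ at each step. Choosing $\epsilon \leq \epsilon_1(M,E)$ small enough, the cumulative estimate $\|u-\tilde u\|_{S(I)} \leq C(M,E)\epsilon^c$ follows for some $c = c(d,M,E) > 0$, and the remaining two bounds in \eqref{eq2.23} are then obtained from Strichartz applied to $u = \tilde u + w$ together with the bound on $\tilde u$. The main obstacle I anticipate is the bookkeeping for the non-local Hartree difference: because the nonlinearity is trilinear with a convolution factor, one must choose the H\"older/HLS exponents so that two of the three factors carry small $S$-norms while the third carries $\|w\|_{S(I_j)}$, and simultaneously verify that the surviving norms on each factor correspond to Strichartz-admissible pairs in the sense of Definition~2.1 and Lemma~\ref{strichartzest}; this is the only place where the specific structure of the cubic convolution (rather than a local power nonlinearity) enters, and it dictates the dimensional restriction $d \geq 4$.
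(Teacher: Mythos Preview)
Your overall strategy---partition $I$ into pieces where $\|\tilde u\|_{S}$ is small, run Strichartz plus a bootstrap on each piece, then iterate---is exactly the standard argument the paper invokes (the paper gives no details beyond referring to \cite{MZZ}).

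There is, however, one concrete gap in your nonlinear estimate. The displayed bound
\[
\bigl\|f(u)-f(\tilde u)\bigr\|_{L_{t,x}^{2(d+1)/(d+3)}(I_j)}\ \lesssim\ \bigl(\|\tilde u\|_{S(I_j)}^2+\|w\|_{S(I_j)}^2\bigr)\|w\|_{S(I_j)}
\]
is false for $d\ge 4$. If all three factors sit in $L_{t,x}^{2(d+1)/(d-1)}$, the spatial H\"older/HLS balance forces $\tfrac{3(d-1)}{2(d+1)}-\tfrac{d-3}{d}=\tfrac{1}{r_1'}$, which equals $\tfrac{d+3}{2(d+1)}$ only when $d=3$; worse, the time H\"older forces the dual exponent $q_1'=\tfrac{2(d+1)}{3(d-1)}\le 1$ once $d\ge 5$, so no admissible dual pair exists. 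This is not merely a bookkeeping issue you can repair by changing the dual pair while keeping three $S$-factors.

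The fix, which is exactly what the paper does when it treats the inhomogeneous term in the proof of Lemma~\ref{gaijstes3drw}, is to place one factor in $L_t^\infty L_x^{2d/(d-1)}$ via the Sobolev embedding $\dot H^{1/2}\hookrightarrow L^{2d/(d-1)}$, and to use the non-diagonal dual pair $L_t^{(d+1)/(d-1)}L_x^{r_1'}$ with $r_1=\tfrac{2d(d+1)}{d^2-5}$:
\[
\bigl\|f(u)-f(\tilde u)\bigr\|_{L_t^{(d+1)/(d-1)}L_x^{r_1'}(I_j)}\ \lesssim\ \bigl(\|\tilde u\|_{L_t^\infty\dot H^{1/2}}+\|w\|_{L_t^\infty\dot H^{1/2}}\bigr)\bigl(\|\tilde u\|_{S(I_j)}+\|w\|_{S(I_j)}\bigr)\|w\|_{S(I_j)}.
\]
This is precisely where the hypothesis $\|(\tilde u,\tilde u_t)\|_{L_t^\infty(\dot H^{1/2}\times\dot H^{-1/2})}\le E$ is used; without it the lemma would not close. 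Since you already carry $\|(w,w_t)\|_{L_t^\infty(\dot H^{1/2}\times\dot H^{-1/2})}$ on the left of your Strichartz inequality, the corrected estimate feeds into your bootstrap and iteration without further change. The error $e$ can stay in the diagonal dual norm $L_{t,x}^{2(d+1)/(d+3)}$ by linearity of the Duhamel operator.
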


Lemma \ref{long} follows from well-known arguments, which are in
fact similar in spirit to the arguments used to prove local
well-posedness. For an example of such an argument in the context of
wave equation with a cubic convolution, see for example \cite{MZZ}.

\vskip 0.2in

With the local theory in hand, we are now in a position to sketch
the proof of Theorem \ref{theorem} by the compactness procedure
as in \cite{KM1, KVnote}. For any $0\leq E_0<+\infty,$ we define
$$L(E_0):=\sup\Big\{S_I(u):~u:~I\times\R^d\to \R\ \text{such\ that}\
\sup_{t\in I}\big\|(u,u_t)\big\|_{\dot
H^{\frac12}\times\dot{H}^{-\frac12}}^2\leq E_0\Big\},$$
where the supremum is taken over all solutions $u:~I\times\R^d\to
\mathbb{R}$ to problem \eqref{equ1.1} satisfying $\sup\limits_{t\in
I}\big\|(u,u_t)\big\|_{\dot
H^{\frac12}\times\dot{H}^{-\frac12}}^2\leq E_0.$ Thus, $L:\
[0,+\infty)\to [0,+\infty)$ is a non-decreasing function. Furthermore,
 we obtain from ``Small data implying scattering"
$$L(E_0)\lesssim E_0^\frac12\quad \text{for}\quad E_0\leq\eta_0^2.$$

\vskip0.1cm

From Lemma \ref{long}, we see that $L$ is continuous. Therefore,
there must exist a unique critical element $E_c\in(0,+\infty]$ such that
$L(E_0)<+\infty$ for $E_0<E_c$ and $L(E_0)=+\infty$ for $E_0\geq
E_c$. In particular, if $u:~I\times\R^d\to \R$ is a maximal-lifespan
solution to problem \eqref{equ1.1} such that $\sup\limits_{t\in
I}\big\|(u,u_t)\big\|_{\dot
H^{\frac12}\times\dot{H}^{-\frac12}}< E_c,$ then $u$ is global
and moreover,
$$S_\R(u)\leq L\big(\big\|(u,u_t)\big\|_{L_t^\infty(\R;\dot H^{\frac12}\times\dot{H}^{-\frac12})}\big).$$
Therefore, the proof of Theorem \ref{theorem} is equivalent to show
$E_c=+\infty.$  We argue by contradiction. The failure of Theorem
\ref{theorem} would imply the existence of very special class of
solutions. Our goal is to exclude such special class of solutions.
Before making further reductions, we give the definition of the almost periodic
solution.

\begin{definition}[Almost periodic solution]\label{apms}
Let $d\geq4,$ a solution $(u,u_t)(t)$ to problem \eqref{equ1.1} with
lifespan $I$ is called almost periodic modulo symmetries if
$(u,u_t)(t)$ is bounded in
$\dot{H}_x^{1/2}(\R^d)\times\dot{H}_x^{-1/2}(\R^d)$ and there exist
functions $N(t):~I\to\R^+,~x(t):~I\to\R^d$ and $C(\eta):\R^+\to\R^+$
such that for all $t\in I$ and $\eta>0$,
\begin{equation}\label{apss}
\int_{|x-x(t)|\geq\frac{C(\eta)}{N(t)}}\Big(\big||\nabla|^{1/2}
u(t,x)\big|^2+\big||\nabla|^{-1/2}
u_t(t,x)\big|^2\Big)\mathrm{d}x\leq\eta
\end{equation}
and
\begin{equation}\label{apsf}
\int_{|\xi|\geq C(\eta)N(t)}\Big(|\xi|\cdot\big|\hat
u(t,\xi)\big|^2+|\xi|^{-1}\cdot\big|\hat
u_t(t,\xi)\big|^2\Big)\mathrm{d}\xi\leq\eta.
\end{equation}
We refer to the function $N(t)$ as the frequency scale function of
the solution $u$,  to $x(t)$ as the spatial center function, and to
$C(\eta)$ as the compactness modules function.
\end{definition}

We remark that for radial almost periodic solutions, one can take
the spatial center function $x(t)\equiv0$, see \cite{KVnote}.

By using the  Bahouri-G\'erard type profile decomposition from
\cite{BG,Bu2010} and the above stability result, we deduce that the
failure of Theorem \ref{theorem} would imply the existence of the
radial almost periodic solutions.

\begin{theorem}[Reduction to radial almost periodic solutions,
\cite{KM,KV2010,MXZ09}]\label{reduce} Assume $E_{c}<+\infty.$ Then
there exists a radial maximal-lifespan solution
$u:~I\times\R^d\to\R$ to problem \eqref{equ1.1} such that
\begin{enumerate}
\item $u$ is radial almost periodic modulo symmetries;

\item $u$ blows up both forward and backward in time;

\item $u$ has the minimal $L_t^\infty\dot{H}^{\frac12}$-norm among all blowup solutions. More precisely, let
$v:~J\times\R^d\to\R$ be a maximal-lifespan solution which blows up
in at least one time direction, then
$$\sup_{t\in I}\big\|(u,u_t)(t)\big\|_{\dot{H}^{\frac12}\times\dot{H}^{-\frac12}}\leq\sup_{t\in
J}\big\|(v,v_t)(t)\big\|_{\dot{H}^{\frac12}\times\dot{H}^{-\frac12}}.$$
\end{enumerate}
\end{theorem}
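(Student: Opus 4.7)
The plan is to follow the concentration-compactness scheme of Kenig-Merle \cite{KM1,KM} adapted to the wave-Hartree setting, as in \cite{KV2010,MZZ}. By the definition of $E_c$, I first select a sequence of radial maximal-lifespan solutions $u_n:I_n\times\R^d\to\R$ to \eqref{equ1.1}, each blowing up forward and backward in time, with
\[
\sup_{t\in I_n}\big\|(u_n,\partial_t u_n)(t)\big\|_{\dot H^\frac12\times\dot H^{-\frac12}}^2\searrow E_c.
\]
After time translation one may assume $0\in I_n$ and $\|(u_n(0),\partial_t u_n(0))\|_{\dot H^\frac12\times\dot H^{-\frac12}}^2\to E_c$. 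To these radial initial data I apply the Bahouri-G\'erard-type linear profile decomposition for the free wave propagator $V_0(t)$ in $\dot H^\frac12\times\dot H^{-\frac12}$ (radial version in \cite{BG,Bu2010}). This yields, after extraction, profiles $(\phi^j,\psi^j)$, pairwise orthogonal scale/time parameters $(\lambda_n^j,t_n^j)$ (with no spatial shifts, by radiality), remainders $(w_n^J,\partial_t w_n^J)$ whose free evolution is asymptotically negligible in $S(\R)$, and Pythagorean decoupling of the $\dot H^\frac12\times\dot H^{-\frac12}$ norm. To each profile I associate a nonlinear profile $U^j$ via Theorem \ref{local}, using the wave operators of item (3) when $|t_n^j/\lambda_n^j|\to\infty$.

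The reduction to a single profile is a pigeonhole argument. Suppose the leading mass $m_1:=\|(\phi^1,\psi^1)\|_{\dot H^\frac12\times\dot H^{-\frac12}}^2<E_c$; then by the definition of $E_c$ every nonlinear profile satisfies $S_\R(U^j)\leq L(m_j)<\infty$, and the tail $j\geq J_0$ is controlled in $S(\R)$ by the small-data clause of Theorem \ref{local}. The approximate solution
\[
\tilde u_n(t,x):=\sum_{j<J}(\lambda_n^j)^{-\frac{d-1}2}U^j\!\Big(\tfrac{t-t_n^j}{\lambda_n^j},\tfrac{x}{\lambda_n^j}\Big)+w_n^J(t,x)
\]
then satisfies, for large $n,J$, the hypotheses of the stability Lemma \ref{long}, forcing $S_{I_n}(u_n)<\infty$ and contradicting the blowup. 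Hence exactly one profile carries the full mass $E_c$, the remainder vanishes strongly, and a suitable normalization of $u_n$ converges to a radial solution $u$ that blows up in at least one time direction and attains the critical level $E_c$.

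Precompactness of the orbit of $u$ modulo the scaling symmetry \eqref{scale} follows by a second contradiction argument: along any sequence of times for which no rescaling confines the data to a compact subset of $\dot H^\frac12\times\dot H^{-\frac12}$, a further application of the profile decomposition produces a second nontrivial profile, contradicting the single-profile conclusion above. Standard reformulations (see \cite{KVnote}) convert precompactness into the quantitative bounds \eqref{apss}--\eqref{apsf} with some frequency scale $N(t)$, and the radial assumption removes the $x(t)$ parameter. Two-sided blowup and the $L_t^\infty\dot H^\frac12$-minimality statement both follow from the same stability + minimality reasoning: forward scattering of $u$ would, via Lemma \ref{long}, preclude the forward blowup of $u_n$ for large $n$; and any blowup solution with strictly smaller $L_t^\infty\dot H^\frac12$-norm would have been eligible for the extraction, contradicting the choice of $E_c$.

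The principal obstacle is the stability input inside the single-profile reduction: one must establish the nonlinear decoupling
\[
\Big\|f(\tilde u_n)-\sum_{j<J}f(U^j_n)-f(w_n^J)\Big\|_{L_{t,x}^{2(d+1)/(d+3)}(I_n\times\R^d)}\xrightarrow{n\to\infty}0
\]
for the non-local Hartree-type nonlinearity $f(u)=(|x|^{-3}\ast|u|^2)u$. Unlike a pointwise nonlinearity, the convolution kernel couples distinct profiles, and the resulting cross-terms have to be dispatched via Hardy-Littlewood-Sobolev combined with orthogonality of $(\lambda_n^j,t_n^j)$, in the spirit of the energy-critical wave-Hartree treatment of \cite{MZZ}.
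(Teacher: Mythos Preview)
Your sketch is correct and matches the paper's treatment: Theorem \ref{reduce} is not proved in the paper but is stated with references to \cite{KM,KV2010,MXZ09}, the paper noting only that it follows from the Bahouri--G\'erard profile decomposition \cite{BG,Bu2010} combined with the stability Lemma \ref{long}. Your outline is precisely this concentration-compactness scheme, and you correctly isolate the one Hartree-specific point (decoupling of the convolution cross-terms in the approximate-solution error), which is handled as in \cite{MZZ}.
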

The reduction to almost periodic solutions is now widely regarded as
a standard technique in the study of dispersive equations at
critical regularity.  Their existence was first proved in the
pioneering work by Karaani \cite{Ker} for the mass-critical NLS.
Kenig and Merle \cite{KM} adapted the argument to the
energy-critical NLS, and first applied this to study the
wellposedness  and scattering problem. Since then, the technique has
proven to be extremely useful,  see  \cite{KVnote,KV,KV2011,MXZ09}
for many more examples of these techniques. In particular, for a
very nice introduction to concentration compactness methods, one should refer to
\cite{KVnote,Visan}.

With Theorem \ref{reduce} in place, we can now make some refinements
to the class of solutions that we consider. A rescaling argument and
possibly time reversal as in \cite{KTV,TVZ} show that we can
restrict our attention to radial almost periodic solutions that do
not escape to arbitrarily high frequencies on at least half of their
maximal lifespan $[0,\infty)$.

\begin{theorem}[Two enemies, \cite{KVnote,MXZ09,TVZ}]\label{three} Suppose
Theorem \ref{theorem} fails. Then there exists a radial
maximal-lifespan solution $u:~(T_-,T_+)\times\R^d\to\R$, which is
radial almost periodic modulo symmetries with
$S_{(T_-,T_+)}(u)=+\infty.$ Moreover, we can also ensure that
$T_+(u)=+\infty,~T_-<0$ and the frequency scale function $N(t)$
matches one of the following two scenarios:
\begin{enumerate}
\item$($Soliton-like solution$)$ We have  $N(t)=1$ for all $t\in\R.$

\item$($High-to-low frequency cascade$)$ We have
$$\sup_{t\in\R^+}N(t)\leq1,~\text{and}~\varliminf_{t\to\infty}N(t)=0.$$
\end{enumerate}
\end{theorem}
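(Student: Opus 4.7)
From Theorem \ref{reduce} we obtain a minimal radial almost periodic blowup solution $u$ on its maximal lifespan $I=(T_-,T_+)$, with frequency scale $N(t)$ and (by radiality) spatial center $x(t)\equiv 0$; in particular $S_I(u)=+\infty$. The goal is to normalize $u$ using the two natural symmetries of the problem---the scaling \eqref{scale} and the time reversal $u(t,x)\mapsto u(-t,x)$---and then classify the possible asymptotic behaviors of $N(t)$ on the half-line $[0,\infty)$. Under the rescaling $u\mapsto\la^{(d-1)/2}u(\la t,\la x)$ the frequency scale transforms as $N(t)\mapsto\la^{-1}N(\la t)$ and the lifespan contracts by a factor $\la$, so we retain the freedom to renormalize $N$ at our convenience.

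The first substantive step is to show $T_+=+\infty$. If $T_+<\infty$, one first shows, via the frequency compactness \eqref{apsf} and the small Strichartz statement of Theorem \ref{local}(4), that $N(t)\to\infty$ as $t\to T_+^-$: were there a sequence $t_n\to T_+$ with $N(t_n)$ bounded, uniform Strichartz estimates on windows of length $\d/N(t_n)$ around $t_n$ would let us extend the solution past $T_+$, contradicting maximality. Next pick $t_n\to T_+$ and $\la_n:=1/N(t_n)\to 0$, and form the rescaled solutions $u_n(t,x):=\la_n^{(d-1)/2}u(t_n+\la_n t,\la_n x)$; each $u_n$ is a radial almost periodic solution with $N_n(0)=1$ and the same compactness modulus $C(\eta)$, and its forward lifespan $[0,(T_+-t_n)N(t_n))$ tends to $+\infty$ by a further application of the local theory. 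An Arzel\`a--Ascoli-type extraction using the compactness inherent in almost periodicity produces a nontrivial limiting radial almost periodic solution $u_\infty$ defined globally in the forward direction, to which the stability Lemma \ref{long} applies; combined with the minimality of $u$ from Theorem \ref{reduce}, this contradicts $S_I(u)=+\infty$.

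With $T_+=+\infty$ in hand (and $T_-<0$ arranged by a harmless time translation), we rescale so that $\sup_{t\ge 0}N(t)\le 1$; the case where $N$ is unbounded on $[0,\infty)$ is ruled out by repeating the rescaling argument above. Two subcases remain. If $\inf_{t\ge 0}N(t)>0$, then $N$ is pinched between two positive constants on $\R^+$; a further rescaling combined with compactness of the orbit in $\dot H^{1/2}\times\dot H^{-1/2}$, the stability Lemma \ref{long}, and a standard oscillation argument (see \cite{KVnote,MXZ09,TVZ}) forces $N(t)\equiv 1$ on all of $\R$, yielding the soliton scenario (1). Otherwise $\liminf_{t\to\infty}N(t)=0$, which together with the normalization $\sup_{t\ge 0}N(t)\le 1$ is exactly the high-to-low cascade scenario (2).

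The main obstacle is Step 2, the elimination of finite-time blowup. Beyond the routine rescaling mechanics, the delicate point is to verify that the limiting profile $u_\infty$ is nontrivial and genuinely solves \eqref{equ1.1}---in particular that the nonlocal nonlinearity $(|x|^{-3}\ast|u|^2)u$ behaves well under the rescaling and the weak limit---so that the stability Lemma \ref{long} is applicable. The remainder of the argument is essentially algebraic once the scaling and time-reversal symmetries are in play, and it follows the template established in \cite{KVnote,MXZ09,TVZ} for analogous dispersive problems.
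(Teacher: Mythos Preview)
The paper does not give its own proof of this theorem; it simply cites \cite{KVnote,MXZ09,TVZ} after the one-sentence remark that ``a rescaling argument and possibly time reversal'' produce the two scenarios. Your outline identifies the correct tools---scaling, time reversal, compactness of the orbit, and the stability Lemma \ref{long}---and in that sense is aligned with the cited references.

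However, there is a genuine logical gap in how you use the extracted limit $u_\infty$. You claim that the existence of a global-forward $u_\infty$, together with stability and the minimality of $u$, \emph{contradicts} $S_I(u)=+\infty$; but no contradiction arises. The limit $u_\infty$ inherits the \emph{same} $L^\infty_t(\dot H^{1/2}\times\dot H^{-1/2})$ bound as $u$ (by strong convergence coming from almost periodicity), not a strictly smaller one, so the minimality of $u$ does not force $u_\infty$ to scatter. Stability is actually used in the opposite direction: if $u_\infty$ had finite forward scattering size one would indeed contradict the maximality of $I$, and this is precisely why $u_\infty$ must have infinite forward scattering size. Hence $u_\infty$ is itself a minimal radial almost periodic blowup solution---now with $T_+=+\infty$ by construction---and the correct move is to \emph{replace} $u$ by $u_\infty$ and continue the analysis with this new solution; one never derives a contradiction from the original $u$. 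The same confusion recurs when you say that unbounded $N$ on $[0,\infty)$ is ``ruled out'' and that in the pinched case an oscillation argument ``forces $N(t)\equiv 1$ on all of $\R$'': in each instance the mechanism in \cite{KVnote,TVZ} is extraction of a further rescaled limit whose frequency scale has the desired normalization, not a modification of the solution already in hand. Rewriting your argument as a sequence of replacements (rather than contradictions) would make it correct and would match the literature the paper cites.
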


 In view of this theorem, our
goal is to preclude the possibilities of all the scenarios.

A further manifestation of the minimality of $u$ as a blow-up
solution is the absence of the scattered wave at the endpoints of
the lifespan $I$.  Formally, we have the following Duhamel
formula, which plays an important role in proving the additional
 regularity. One can refer to \cite{CKSTT07,KVnote,MXZ09} for the proof.

\begin{lemma} [No waste Duhamel formula]\label{lemnwdf} Let $u: I\times\R^d\to\R$ be a maximal-lifespan solution to the equation in \eqref{equ1.1} which is
almost periodic modulo symmetries. Then, for all $t\in I,$ there
holds that
\begin{equation}\label{nwd}
\begin{split}
{u(t)\choose u_t(t)}=&\lim_{T\nearrow\sup(I)}\int_t^{T}V_0(t-s){0\choose f(u)(s)}\,\mathrm{d}s\\
=&-\lim_{T\searrow\inf(I)}\int_T^tV_0(t-s){0\choose
f(u)(s)}\,\mathrm{d}s,
\end{split}
\end{equation}
as weak limits in
$\dot{H}_x^{1/2}(\R^d)\times\dot{H}_x^{-1/2}(\R^d)$. Here $V_0(t)$
is defined as in \eqref{v0tdefin}.
\end{lemma}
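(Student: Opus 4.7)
The idea is to deduce the identity from the local Duhamel formula \eqref{duhamel} by showing that the free propagator applied to the data at a variable initial time $T$ converges weakly to zero in $\dot H^{1/2}(\R^d)\times\dot H^{-1/2}(\R^d)$ as $T\nearrow\sup(I)$. Concretely, applying \eqref{duhamel} with $t_{0}$ replaced by an arbitrary $T\in I$ with $T>t$ and rearranging via $V_{0}(t-T)V_{0}(T-t)=\mathrm{Id}$ yields
\begin{equation*}
\binom{u(t)}{u_{t}(t)}
=V_{0}(t-T)\binom{u(T)}{u_{t}(T)}
+\int_{t}^{T}V_{0}(t-s)\binom{0}{f(u(s))}\,\mathrm{d}s.
\end{equation*}
Hence the forward identity reduces to the assertion
\begin{equation*}
V_{0}(t-T)\binom{u(T)}{u_{t}(T)}\rightharpoonup 0\quad\text{in }\dot H^{1/2}\times\dot H^{-1/2}\text{ as }T\nearrow\sup(I),
\end{equation*}
and the backward identity follows by applying the same reasoning to the time-reversed solution.

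Since $V_{0}$ acts isometrically on $\dot H^{1/2}\times\dot H^{-1/2}$ (by a direct Fourier computation using that $V_{0}(s)^{\ast}=V_{0}(-s)$) and \eqref{assume1.1} supplies the uniform bound, the family $\{V_{0}(t-T)(u(T),u_{t}(T))^{\top}\}_{T}$ is uniformly bounded, so a standard $3\eta$-argument reduces the verification of weak convergence to testing against a dense subset of the dual space $\dot H^{-1/2}\times\dot H^{1/2}$. For this dense set I would choose Schwartz pairs $\phi=(\phi_{1},\phi_{2})$ whose Fourier transforms are compactly supported in a fixed annulus $\{|\xi|\in[M^{-1},M]\}$. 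By duality the pairing becomes $\langle (u(T),u_{t}(T))^{\top},\,V_{0}(T-t)\phi\rangle$, and the remaining task is to show that this tends to zero. I would analyse the regimes identified in Theorem \ref{three} separately.

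In the finite-lifespan case $\sup(I)<\infty$, almost periodicity forces $N(T)\to\infty$; in the high-to-low cascade case $\sup(I)=\infty$ with $\liminf_{T\to\infty}N(T)=0$, I would restrict to the subsequence on which $N(T_{n})\to 0$. In both situations the relevant frequencies of $(u(T),u_{t}(T))$ cluster near $N(T)$, while $V_{0}(T-t)\phi$ keeps its Fourier support pinned in $\{|\xi|\in[M^{-1},M]\}$ for all $T$ (because the matrix symbol of $V_{0}(s)$ preserves Fourier support). A Littlewood--Paley decomposition combined with \eqref{apsf} and its low-frequency analogue (implicit in the precompactness underlying Definition \ref{apms}) shows that the essential frequency supports of $(u(T),u_{t}(T))$ and $V_{0}(T-t)\phi$ become disjoint; Plancherel together with the uniform bound then yields a pairing of size $O(\eta)+o(1)$.

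The remaining and genuinely harder case is the soliton regime where $N(T)$ stays in a compact subset of $(0,\infty)$; here frequency mismatch fails and one must invoke spatial dispersion. By radiality we may take $x(T)\equiv 0$, so \eqref{apss} confines $(u(T),u_{t}(T))$ to $|x|\le C(\eta)$ up to $\eta$-error, whereas the classical dispersive estimate $\|V_{0}(T-t)\phi\|_{L^{\infty}(|x|\le R)}\lesssim\langle T-t\rangle^{-(d-1)/2}$, valid for $d\ge 4$, drives $V_{0}(T-t)\phi$ out of any fixed ball as $T\to\infty$. Combining the two localizations gives a pairing of size $O(\eta)+o(1)$, and sending $\eta\to 0$ finishes the weak-convergence claim; the Duhamel integral passes to the limit in the weak topology via the local Strichartz bounds of Theorem \ref{local}. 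The main technical obstacle is precisely this soliton-regime step, since it demands quantitatively combining the spatial compactness of $(u(T),u_{t}(T))$ with sharp linear-wave dispersive decay in dimension $d\ge 4$, and it is here that the radial hypothesis plays an essential role.
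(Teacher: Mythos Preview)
The paper does not supply its own proof of this lemma, instead deferring to \cite{CKSTT07,KVnote,MXZ09}; your overall framework---reducing to the weak-convergence assertion $V_0(t-T)(u(T),u_t(T))\rightharpoonup 0$ and testing against Schwartz pairs with annular Fourier support---matches the standard argument in those references. However, your execution has a genuine gap in the high-to-low cascade case. You write ``I would restrict to the subsequence on which $N(T_n)\to 0$,'' but weak convergence requires the pairing to vanish along \emph{every} sequence $T_n\to\infty$, not merely one. In the cascade scenario $N(t)\le 1$ with only $\liminf_{t\to\infty}N(t)=0$, so along many subsequences $N(T_n)$ stays bounded away from zero and the frequency-mismatch argument fails there. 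The correct organization is not to invoke the dichotomy of Theorem~\ref{three} at all (Lemma~\ref{lemnwdf} is stated for arbitrary almost periodic solutions and logically precedes that reduction), but rather: given any $T_n\to\sup(I)$, pass to a subsequence along which $N(T_{n_k})\to N_\infty\in[0,\infty]$; if $N_\infty\in\{0,\infty\}$ use frequency mismatch, and if $N_\infty\in(0,\infty)$ use the spatial-localization-plus-dispersion argument you describe under ``soliton regime.''

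Your final sentence also contains a misconception: the radial hypothesis is \emph{not} needed for Lemma~\ref{lemnwdf}, and the cited references prove it without radiality. When $N(T_{n_k})\to N_\infty\in(0,\infty)$, almost periodicity via \eqref{apss} localizes $(u(T_{n_k}),u_t(T_{n_k}))$ to a ball of fixed radius about $x(T_{n_k})$; the $L^1$-norm of this localized, frequency-truncated piece is bounded independently of where the ball sits, so the global dispersive bound $\|V_0(T_{n_k}-t)\phi\|_{L^\infty(\R^d)}\lesssim |T_{n_k}-t|^{-(d-1)/2}$ kills the pairing regardless of $x(T_{n_k})$. Radiality enters the paper only in Section~4, where it supplies the extra pointwise decay needed for the double-Duhamel regularity argument.
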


In view of this lemma and note that the minimal
$L_t^\infty(\dot{H}^{\frac12}_x\times\dot H^{-\frac12}_x)$-norm blowup solution is localized in both physical and frequency space,
we can show that it admits additional regularity by the bootstrap
argument and double Duhamel trick.

\begin{theorem}[Additional Regularity]\label{regular}  Let
$u:~(T_-,+\infty)\times\R^d\to\R$ be a radial  solution to equation
in \eqref{equ1.1} which is almost periodic modulo symmetries in the
sense of Theorem \ref{three}. And assume that $N(t)\leq1$ on
$t\in\R^+$, then for each $t\in\R^+$, there holds
\begin{equation}\label{addrgrlt}
\|(u,u_t)\|_{\dot H^{1}_x(\R^d)\times L^2_x(\R^d)}\lesssim
\begin{cases}
N(t)^\frac12\quad\text{if}\quad d\geq5,\\
N(t)^\frac13\quad\text{if}\quad d=4.
\end{cases}
\end{equation}
\end{theorem}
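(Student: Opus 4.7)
The strategy is a bootstrap argument driven by the double Duhamel trick, following the blueprint of Dodson--Lawrie \cite{DL}. The No-waste Duhamel formula (Lemma \ref{lemnwdf}) represents $(u,u_t)(t)$ \emph{both} as a forward integral of the nonlinearity over $[t,\infty)$ and as a backward integral over $(T_-,t]$. Pairing these two representations in the $\dot H^{1/2}\times\dot H^{-1/2}$ inner product cancels all linear contributions and leaves a bilinear expression whose two time variables live in disjoint intervals. This separation is precisely what allows one to exploit the dispersive decay of the wave propagator to gain regularity above the critical $\dot H^{1/2}$-threshold.

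Concretely, for $t\in\R^+$ and a dyadic frequency $N$, writing $F(s)=\binom{0}{f(u(s))}$, I would derive
\begin{equation*}
\big\|P_N(u,u_t)(t)\big\|^2_{\dot H^{1/2}\times\dot H^{-1/2}} \;=\; -\int_{T_-}^{t}\!\int_{t}^{+\infty}\!\! \big\langle V_0(t-s)P_N F(s),\, V_0(t-s')P_N F(s')\big\rangle\, ds\,ds',
\end{equation*}
and try to bound the right-hand side by a multiple of $(N(t)/N)^{1+2\sigma}$ for some $\sigma>0$. This would yield uniform $\dot H^{1/2+\sigma}\times\dot H^{-1/2+\sigma}$ control on $P_N(u,u_t)$. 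Combined with the almost-periodic frequency localization \eqref{apsf} on the low-frequency pieces, a finite number of iterations would upgrade the regularity all the way to the energy space $\dot H^1_x\times L^2_x$, with the claimed quantitative dependence on $N(t)$.

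Two classes of estimates will carry the argument. First, a frequency-localized bound for the Hartree nonlinearity $P_N[(|x|^{-3}\ast|u|^2)u]$: since by Hardy--Littlewood--Sobolev the operator $|x|^{-3}\ast$ is a Riesz potential of order $d-3$, standard Littlewood--Paley/HLS arguments together with the Strichartz estimate from Lemma \ref{strichartzest} yield the required control on $P_N f(u)$. Second, a bilinear space-time estimate for the double time integral: here I would combine the dispersive decay $\|V_0(t)\|_{L^1\to L^\infty}\lesssim |t|^{-(d-1)/2}$ with Strauss's radial Sobolev inequality $|u(x)|\lesssim |x|^{-\frac{d-1}{2}+s}\,\||\nabla|^{s}u\|_{L^2}$, which supplies the extra weighted spatial decay on the nonlinearity needed to make the time integral converge. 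In $d\geq 5$ the decay is comfortable and produces the natural scaling bound $N(t)^{1/2}$; in $d=4$ the decay is borderline and one loses a fraction of a derivative, obtaining $N(t)^{1/3}$ instead, which will still suffice for the virial analysis in the next section.

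The main obstacle will be convergence of the bilinear integral in the borderline case $d=4$, where the radial Sobolev weight must be used essentially optimally and the bootstrap has to be tuned to avoid logarithmic losses; this is also the step that forces the radial hypothesis. A secondary delicate point is the non-locality of the Hartree potential, which requires that the Littlewood--Paley decomposition and the HLS bound be applied in the correct order when frequency-localizing $f(u)$. Once these ingredients are in place, a standard iteration closes the bootstrap in finitely many steps, and the dependence on $N(t)$ is tracked explicitly by inserting the almost-periodicity modulus $C(\eta)$ from Definition \ref{apms} at each stage.
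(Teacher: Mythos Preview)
Your overall strategy is correct and matches the paper's: double Duhamel pairing via Lemma~\ref{lemnwdf}, frequency localization, radial Sobolev to gain decay, and a bootstrap to climb from $\dot H^{1/2}$ to $\dot H^1$. The paper indeed carries this out in two explicit steps (first to $\dot H^{2/3}\times\dot H^{-1/3}$ in Theorem~4.1, then to $\dot H^1\times L^2$), using frequency envelopes rather than a direct $(N(t)/N)^{1+2\sigma}$ bound.

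However, there is a genuine gap in your bilinear estimate. You propose to make the double time integral converge by pairing the global dispersive decay $\|V_0(t)\|_{L^1\to L^\infty}\lesssim|t|^{-(d-1)/2}$ with radial Sobolev on the nonlinearity. This does not close as stated: radial Sobolev gives decay in $|x|$, not in $|t|$, and near $x=0$ the nonlinearity has no pointwise decay at all, so you cannot place it in $L^1_x$ or in a weighted space that would feed into the dispersive estimate uniformly in time. The paper resolves this with a \emph{spatial} cutoff $\chi(8x/|t|)$ splitting each Duhamel integral into an ``outer'' piece $\{|x|\gtrsim|t|\}$ and an ``inner'' piece $\{|x|\lesssim|t|\}$. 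On the outer piece, radial Sobolev converts $|x|$-decay into $|t|$-decay and the resulting $L^2_x$ bound is time-integrable (this is \eqref{equlartime} and \eqref{yuxianxiao3drw}). On the inner piece, the crucial observation is that in the cross term $\langle B,B'\rangle$ the two spatial variables satisfy $|x-y|\le\tfrac14|\tau-t|$, so one is strictly inside the wave cone; a stationary-phase computation on the frequency-localized kernel $K_k$ then yields $|K_k(x-y)|\lesssim 2^{(d-1)k}\langle 2^k|\tau-t|\rangle^{-L}$ for any $L$ (see \eqref{est:kkxy}), which is what makes the double time integral in \eqref{equbbest} converge. Neither the cutoff nor the stationary-phase step appears in your proposal, and without them the bilinear integral would diverge. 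This is also where the $d=4$ loss actually occurs: it is not the bilinear term but the outer-region estimate that forces the weaker exponent, because the radial Sobolev embedding available at regularity $\dot H^{2/3}$ in $\R^4$ only yields $|t|^{-4/3}$ decay (cf.\ the proof of \eqref{yuxianxiao3drw}).

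A secondary omission is the refined small-time Strichartz bound on intervals $J_\delta(t_0)=[t_0-\delta/N(t_0),\,t_0+\delta/N(t_0)]$ (Lemma~\ref{gaijstes3drw} and Corollary~\ref{cor3drw12}), which controls the Duhamel contribution near $t_0$ and provides the small factor $\eta^2$ needed to close the frequency-envelope bootstrap in Proposition~\ref{prop2.23drw}. Your sketch jumps directly to the large-time bilinear estimate without isolating this piece.
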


It follows from this theorem that the high-to-low frequency cascade
solution satisfies
$$\varliminf_{t\to\infty}\|(u,u_t)\|_{\dot H^{1}_x(\R^d)\times L^2_x(\R^d)}\lesssim \varliminf_{t\to+\infty}
N(t)^\frac13=0,$$ which implies that
$$E(u,u_t)(t)\to0,~\text{as}~t\to+\infty.$$
This together with Theorem \ref{propbnpe} below precludes the
existence of the high-to-low frequency cascade solution.
\vskip0.1cm

Moreover, for the soliton-like solution, we have
\begin{proposition}\label{propgdzzx}
 Let
$u:~\R\times\R^d\to\R$  be a radial  solution to equation in \eqref{equ1.1}
which is almost periodic modulo symmetries in the sense of Theorem
\ref{three} with $N(t)\equiv1$. Then the set
$K:=\big\{(u,u_t)(t):~t\in\R\big\}$ is precompact in $(\dot
H^\frac12\times\dot H^{-\frac12})\cap(\dot H^1\times L^2)$.

\end{proposition}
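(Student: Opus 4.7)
Here's the plan. The additional regularity supplied by Theorem \ref{regular} under the hypothesis $N(t)\equiv 1$ gives the uniform bound $\sup_{t\in\R}\|(u,u_t)(t)\|_{\dot H^1_x\times L^2_x}<\infty$, and in particular ensures that the conserved energy
\begin{equation*}
E(u,u_t)=\frac12\|(\nabla u,u_t)\|_{L^2_x}^2+\frac{\mu}{4}\cV(u),\qquad \cV(u):=\iint\frac{|u(x)|^2|u(y)|^2}{|x-y|^{3}}\,\mathrm{d}x\,\mathrm{d}y,
\end{equation*}
is finite along the orbit. Since convergence in $(\dot H^{1/2}\times\dot H^{-1/2})\cap(\dot H^1\times L^2)$ is equivalent to convergence in each factor, and the hypothesized almost periodicity already provides precompactness of $K$ in $\dot H^{1/2}\times\dot H^{-1/2}$ (with $x(t)\equiv 0$ from radiality), the task reduces to proving precompactness of $K$ in $\dot H^1\times L^2$.

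Given any sequence $\{t_n\}\subset\R$, the $\dot H^{1/2}\times\dot H^{-1/2}$-precompactness lets me extract a subsequence with $(u(t_n),u_t(t_n))\to(\phi,\psi)$ strongly there. Banach-Alaoglu applied to the uniformly $\dot H^1\times L^2$-bounded family then furnishes a further subsequence converging weakly in $\dot H^1\times L^2$, and uniqueness of weak limits forces the limit to again be $(\phi,\psi)$. Interpolating $\|f\|_{\dot H^{3/4}}\leq\|f\|_{\dot H^{1/2}}^{1/2}\|f\|_{\dot H^1}^{1/2}$, together with its analogue $\|g\|_{\dot H^{-1/4}}\leq\|g\|_{\dot H^{-1/2}}^{1/2}\|g\|_{L^2}^{1/2}$ for the time derivative, then upgrades this to strong convergence in $\dot H^{3/4}\times\dot H^{-1/4}$.

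The main step is to promote weak to strong convergence in the Hilbert space $\dot H^1\times L^2$, which reduces to convergence of norms. Energy conservation yields
\begin{equation*}
\|(u(t_n),u_t(t_n))\|_{\dot H^1\times L^2}^2=2E(u,u_t)-\tfrac{\mu}{2}\cV(u(t_n)),
\end{equation*}
so the only remaining content is continuity of $\cV$ at the regularity where I already have strong convergence. A Hardy-Littlewood-Sobolev plus Sobolev embedding estimate gives $\cV(u)\lesssim\|u\|_{L^{4d/(2d-3)}}^4\lesssim\|u\|_{\dot H^{3/4}}^4$ for $d\geq 4$, which by standard multilinear inequalities yields a locally Lipschitz bound for $\cV$ on $\dot H^{3/4}$. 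Hence $\cV(u(t_n))\to\cV(\phi)$, so the sum $\|\nabla u(t_n)\|_{L^2}^2+\|u_t(t_n)\|_{L^2}^2$ converges to $\|\nabla\phi\|_{L^2}^2+\|\psi\|_{L^2}^2$; a short lower-semicontinuity argument (each term is weakly lower semicontinuous in its factor, so convergence of the sum forces convergence of each) then separates the $\dot H^1$- and $L^2$-components, giving strong convergence. The main technical obstacle I anticipate is the justification of energy conservation at the upgraded regularity: the solution is a priori only a strong $\dot H^{1/2}$-solution in the sense of Definition \ref{def1.1}, and once Theorem \ref{regular} places it in $L^\infty_t(\dot H^1\times L^2)$, one must combine this with the local theory of Theorem \ref{local} and a smooth approximation (via the Strichartz bounds of Lemma \ref{strichartzest}) to certify it as a bona fide energy-class solution for which the conservation law applies.
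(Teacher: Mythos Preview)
Your argument is correct and takes a genuinely different route from the paper. The paper does not invoke energy conservation here; instead, it bootstraps the regularity one step further, proving the uniform bound
\[
\|(u,u_t)(t)\|_{L_t^\infty(\R;\dot H^2_x\times\dot H^1_x)}\lesssim 1
\]
by repeating the double Duhamel machinery of Theorem \ref{regular}, now with $\nabla\big((|x|^{-3}\ast|u|^2)u\big)$ in place of $(|x|^{-3}\ast|u|^2)u$ and using the already-secured $\dot H^1\times L^2$ bound as input (together with a refined Strichartz estimate on short intervals). Precompactness in $\dot H^1\times L^2$ then follows from the standard interpolation mechanism (cited as Lemma 6.12 of \cite{KVnote}): strong convergence along a subsequence in $\dot H^{1/2}\times\dot H^{-1/2}$ combined with uniform boundedness in $\dot H^2\times\dot H^1$ upgrades to strong convergence in the intermediate space $\dot H^1\times L^2$.

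Your approach trades this second round of regularity bootstrap for the conservation law: once $\cV$ is shown to be continuous on $\dot H^{3/4}$, the conserved energy pins down the $\dot H^1\times L^2$ norm, and weak convergence plus norm convergence in a Hilbert space yields strong convergence. This is more elementary once energy conservation is justified at the available regularity, and your identification of that justification (persistence of regularity for the $\dot H^{1/2}$-solution into the energy class, via the Strichartz estimates of Lemma \ref{strichartzest} and approximation by smooth data) as the main technical point is apt. The paper's route is more robust in that it needs no conserved quantity and would transfer unchanged to problems lacking one; yours is shorter for this particular equation.
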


We will utilize this proposition and the virial identity to prove
that the energy of such solution is exact zero. Thus, we exclude the
existence of the soliton-like solution. We refer to Section 3 for more
details.

The paper is organized as follows. In Section $2$,  as
preliminaries, we gather notations and recall the Strichartz
estimate for wave equation and some useful lemmas.  In Section $3$,
we exclude two scenarios in the sense of Theorem \ref{three} under
the assumption that Theorem \ref{regular} and Proposition
\ref{propgdzzx} hold. In Section $4$, we show  Theorem \ref{regular}
and Proposition \ref{propgdzzx}, and so we conclude Theorem
\ref{theorem}.

\section{Preliminaries}
 \setcounter{section}{2}\setcounter{equation}{0}

\subsection{Notations and useful lemmas}
First, we give some notations which will be used throughout this
paper. To simplify  our inequalities, we introduce
the  symbols $\lesssim, \thicksim, \ll$. If $X, Y$ are nonnegative
quantities, we write either $X\lesssim Y $ or $X=O(Y)$ to denote the estimate
$X\leq CY$ for some $C$, and $X \thicksim Y$ to denote the estimate
$X\lesssim Y\lesssim X$. We use $X\ll Y$ to mean $X \leq c Y$ for
some small constant $c$. We use $C\gg1$ to denote various large
finite constants, and $0< c \ll 1$ to denote various small
constants.  For every $r, 1\leq r \leq \infty$, we denote by $\|\cdot
\|_{r}$ the norm in the Lebesque space $L^{r}=L^{r}(\mathbb{R}^d)$ and by $r'$ the
conjugate exponent
 defined by $\frac{1}{r} + \frac{1}{r'}=1$.

The Fourier transform on $\mathbb{R}^d$ is defined by
\begin{equation*}
\aligned \widehat{f}(\xi):= \big( 2\pi
\big)^{-\frac{d}{2}}\int_{\mathbb{R}^d}e^{- ix\cdot
\xi}f(x)\,\mathrm{d}x ,
\endaligned
\end{equation*}
giving rise to the fractional differentiation operator
$|\nabla|^{s}$  defined by
\begin{equation*}
\aligned |\nabla|^sf(x):=\mathcal
F^{-1}_{\xi}(|\xi|^s\hat{f}(\xi))(x).
\endaligned
\end{equation*}
This helps us to define the homogeneous and inhomogeneous Sobolev
norms
\begin{equation*}
\big\|f\big\|_{\dot{W}^{s,p}_x(\R^d)}:= \big\| |\nabla|^s
f\big\|_{L^p_x(\R^d)}.
\end{equation*}

We will also need the Littlewood-Paley projection operators.
Specifically, let $\varphi(\xi)$ be a smooth bump function adapted
to the ball $|\xi|\leq 2$ which equals 1 on the ball $|\xi|\leq 1$.
For $k\in {\mathbb{Z}}$, we define the Littlewood-Paley operators
\begin{equation*}
\aligned \widehat{P_{\leq k}f}(\xi)& :=
\varphi\Big(\frac{\xi}{2^k}\Big)\widehat{f}(\xi), \\
\widehat{P_{> k}f}(\xi)& :=
\Big(1-\varphi\Big(\frac{\xi}{2^k}\Big)\Big)\widehat{f}(\xi), \\
\widehat{P_{k}f}(\xi)& :=
\Big(\varphi\Big(\frac{\xi}{2^k}\Big)-\varphi\Big(\frac{2\xi}{2^k}\Big)\Big)\widehat{f}(\xi).
\endaligned
\end{equation*}
Similarly we can define $P_{<k}$, $P_{\geq k}$, and $P_{m<\cdot\leq
k}=P_{\leq k}-P_{\leq m}$.

The Littlewood-Paley operators commute with derivative operators,
the free propagator, and the conjugation operation. They are
self-adjoint and bounded on every $L^p_x$ and $\dot{H}^s_x$ space
for $1\leq p\leq \infty$ and $s\geq 0$, moreover, they also obey the
following
 Bernstein estimates
\begin{eqnarray*}\label{bernstein}
\big\||\nabla|^{\pm s} P_{k} f \big\|_{L^p} & \thicksim & 2^{\pm ks}
\big\|
P_{k} f \big\|_{L^p},  \\
\big\| P_{\leq k} f \big\|_{L^q} & \lesssim &
2^{(\frac{d}{p}-\frac{d}{q})k} \big\|
P_{\leq k} f \big\|_{L^p},  \\
\big\| P_{ k} f \big\|_{L^q} & \lesssim &
2^{(\frac{d}{p}-\frac{d}{q})k} \big\|P_{k} f \big\|_{L^p},
\end{eqnarray*}
where $1\leq p\leq q \leq \infty$.

Next, we record here a refinement of the Sobolev embedding for
radial functions, which will be of use in Section 4.
\begin{lemma}[Radial Sobolev embedding, \cite{TVZ}] Let $d\geq1, \; 1\leq
q\leq\infty,~0<s<d,$ and $\beta\in\R$ obey the conditions
$$\beta>-\tfrac{d}{q},~1\leq\tfrac1{p}+\tfrac1{q'}\leq1+s$$
and the scaling condition
$$\beta+s=\tfrac{d}{p}-\tfrac{d}{q}$$
with at most one of the equalities
$$p=1,~p=\infty,~q=1,~q=\infty,~\tfrac1{p}+\tfrac1{q'}=1+s$$
holding. Then for any spherically symmetric function $f\in\dot
W^{s,p}(\R^d)$, we have
\begin{equation}
\|r^\beta
f\|_{L^q(\R^d)}\lesssim\big\||\nabla|^sf\big\|_{L^p(\R^d)}.
\end{equation}

\end{lemma}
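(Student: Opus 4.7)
The plan is to reduce the claim to a weighted fractional integration estimate for spherically symmetric functions. Setting $g := |\nabla|^s f$, the hypothesis gives $g \in L^p$ and, since $0 < s < d$, we may recover $f$ via the Riesz potential
\[ f(x) \;=\; c_{s,d}\int_{\R^d}\frac{g(y)}{|x-y|^{d-s}}\,\mathrm{d}y. \]
Taking absolute values, it therefore suffices to prove $\|\,|x|^\beta I_s g\,\|_{L^q_x(\R^d)} \lesssim \|g\|_{L^p(\R^d)}$ for every spherically symmetric $g$, where $I_s$ denotes the Riesz potential of order $s$. This is a Stein-Weiss type weighted Hardy-Littlewood-Sobolev inequality; the scaling condition $\beta + s = d/p - d/q$ is precisely the homogeneity balance required for a scale-invariant estimate.

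To exploit the radial symmetry I pass to polar coordinates $|x|=r$, $|y|=\rho$. Performing the angular integration yields
\[ I_s g(r) \;=\; \int_0^\infty g(\rho)\,\rho^{d-1}\,K_s(r,\rho)\,\mathrm{d}\rho, \qquad K_s(r,\rho) \,:=\, \int_{S^{d-1}}|re_1 - \rho\omega|^{s-d}\,\mathrm{d}\sigma(\omega). \]
A direct computation shows that $K_s(r,\rho) \lesssim \max(r,\rho)^{s-d}$ whenever $r$ and $\rho$ are well separated, while near the diagonal $r \sim \rho$ the kernel behaves like the one-dimensional Riesz potential of order $s$. In the off-diagonal regions the radial setting effectively gains a factor of $\max(r,\rho)^{-(d-1)}$ compared with the non-radial Stein-Weiss kernel, and it is this gain that is responsible for the enlarged admissible range of exponents stated in the lemma.

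It then remains to verify the one-dimensional weighted kernel bound
\[ \Big\| r^{\beta + \tfrac{d-1}{q}} \int_0^\infty \rho^{d-1} K_s(r,\rho)\,|g(\rho)|\,\mathrm{d}\rho \Big\|_{L^q(\mathrm{d}r)} \;\lesssim\; \big\|\rho^{\tfrac{d-1}{p}} g\big\|_{L^p(\mathrm{d}\rho)}. \]
I would split the $\rho$-integration into the three regions $\rho < r/2$, $\;r/2 \le \rho \le 2r$, and $\rho > 2r$. For the two off-diagonal regions, Schur's test applies after substituting the bound $K_s \lesssim \max(r,\rho)^{s-d}$: the condition $\beta > -d/q$ controls the integrability at the origin, while the dual condition hidden in $1/p + 1/q' \le 1+s$ controls the integrability at infinity. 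The near-diagonal contribution reduces to a standard one-dimensional Hardy-Littlewood-Sobolev inequality of order $s$ on the multiplicative group $(0,\infty)$, which is valid precisely throughout the stated parameter range.

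The main obstacle is the careful bookkeeping at the boundary of the admissible range: the one-dimensional HLS step breaks down whenever two of the endpoint relations $p=1$, $p=\infty$, $q=1$, $q=\infty$, $1/p+1/q'=1+s$ hold simultaneously, which is exactly why the hypothesis allows at most one such equality. Away from the endpoints, the scale invariance enforced by $\beta+s = d/p - d/q$ ensures that the Schur and HLS inputs glue together homogeneously, and no further care with dilations is needed to conclude.
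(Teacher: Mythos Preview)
The paper does not prove this lemma at all; it is merely quoted from \cite{TVZ} (Tao--Visan--Zhang) as a known tool, so there is no ``paper's own proof'' to compare against. Your outline is in fact essentially the argument given in \cite{TVZ}: write $f=I_s g$ with $g=|\nabla|^s f$, reduce to a radial Stein--Weiss inequality for the Riesz potential, integrate out the angular variable to get a one-dimensional kernel $K_s(r,\rho)$, and then split into the near-diagonal piece (handled by one-dimensional HLS on the multiplicative line) and the off-diagonal pieces (handled by Schur's test using $K_s(r,\rho)\lesssim\max(r,\rho)^{s-d}$). So your approach is correct and matches the source the paper cites.

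One small caution: your discussion of the endpoint restrictions is accurate in spirit but somewhat imprecise. The constraint $\beta>-d/q$ is what makes the region $\rho>2r$ (not the origin) controllable via Schur, and the condition $1/p+1/q'\le 1+s$ together with the scaling relation is equivalent to the dual weight bound $-\beta-s>-d/p'$, which governs the region $\rho<r/2$. If you intend to submit this as an actual proof rather than a sketch, you should state these two Schur computations explicitly and be precise about which single endpoint equality survives in each of the three pieces; otherwise the reader cannot verify the ``at most one equality'' clause from your argument.
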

%
%
%
%
%

The Strichartz estimates involve the following definitions:
\begin{definition}[Admissible pairs]\label{def1}
A pair of Lebesgue space exponents $(q,r)$ are called wave
admissible in $\mathbb{R}^{1+d}$, denoted by $(q,r)\in \Lambda_{0}$
when $q,r\geq 2$, and
\begin{equation}\label{equ21}
\tfrac{2}{q}\leq(d-1)\big(\tfrac{1}{2}-\tfrac{1}{r}\big),~\text{and}\quad(q,r,d)\neq
(2,\infty,3).
\end{equation}

\end{definition}

Now we recall the following Strichartz estimates.
\begin{lemma}[Strichartz estimates,\cite{GiV95, KeT98,
LS, Sogge}]\label{strichartzest}  Let $I$ be a compact time interval
and let $u:~I\times\R^d\to\R$ be a solution to the forced wave
equation
$$u_{tt}-\Delta u+F=0$$ with initial data $(u,\dot{u})\big|_{t=t_0}=(u_0,u_1)\in\dot H^s\times\dot
H^{s-1}$ for $s>0$. Then
\begin{equation*}
\|u\|_{L_t^q(I,L_x^r)}\lesssim \big\|\big(u(t_0),\pa_t
u(t_0)\big)\big\|_{\dot{H}^s\times \dot
H^{s-1}}+\|F\|_{L_t^{q_1'}(I, L_x^{r_1'})},
\end{equation*}
where  $(q,r)$ and $(q_1,r_1)$ are admissible pairs satisfying
$$\frac1q+\frac{d}r=\frac{d}2-s=\frac1{q_1'}+\frac{d}{r_1'}-2.$$

\noindent Furthermore, we have the frequency localized Strichartz estimate
\begin{equation}\label{frstrich}
\big\|(\nabla_x,\pa_t)P_ku\big\|_{L_t^{q_1}(I,L_x^{r_1})}\lesssim
2^{k\mu_1}\big\|P_k(u_0,u_1)\big\|_{\dot H^1\times
L^2}+2^{k\mu_2}\|P_kF\|_{L_t^{q_2'}(I; L_x^{r_2'})},
\end{equation}
where $(q_1,r_1)$ and $(q_2,r_2)$  are admissible pairs  and satisfy
$$\mu_1=\delta(r_1)-\tfrac1{q_1},~\mu_2=\mu_1+\delta(r_2)-\tfrac1{q_2},~\delta(r):=d\big(\tfrac12-\tfrac1r\big).$$
\end{lemma}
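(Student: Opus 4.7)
The plan is to reduce the statement to the classical Strichartz theory for the half-wave propagators $e^{\pm it\omega}$ with $\omega=(-\Delta)^{1/2}$, and then sum in frequency. First, by Euler's formula one writes the free evolution $\dot K(t)$ and $K(t)$ appearing in \eqref{v0tdefin} as linear combinations of $e^{\pm it\omega}$ (with appropriate $\omega^{-1}$ factors). Duhamel's principle then represents $u$ as
\begin{equation*}
u(t)=\tfrac12(e^{i(t-t_0)\omega}+e^{-i(t-t_0)\omega})u_0+\tfrac{1}{2i\omega}(e^{i(t-t_0)\omega}-e^{-i(t-t_0)\omega})u_1-\int_{t_0}^{t}\tfrac{\sin((t-s)\omega)}{\omega}F(s)\,ds.
\end{equation*}
So it suffices to control each half-wave evolution in the required mixed Lebesgue norms.

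Next I would recall the standard frequency-localized dispersive estimate for the half-wave group,
\begin{equation*}
\|P_k e^{\pm it\omega}f\|_{L^\infty_x(\R^d)}\lesssim |t|^{-\frac{d-1}{2}}2^{k\frac{d+1}{2}}\|P_k f\|_{L^1_x(\R^d)},
\end{equation*}
obtained by stationary phase applied to the oscillatory kernel of $P_k e^{\pm it\omega}$. Interpolating this with the trivial $L^2\to L^2$ bound and feeding the result into the abstract $TT^*$ machinery of Keel--Tao yields, for every wave admissible pair $(q,r)\in\Lambda_0$ (see \eqref{equ21}), the frequency localized estimate
\begin{equation*}
\|P_k e^{\pm it\omega}f\|_{L^q_tL^r_x}\lesssim 2^{k(\delta(r)-\frac1q)}\|P_k f\|_{L^2_x},\qquad \delta(r)=d\bigl(\tfrac12-\tfrac1r\bigr).
\end{equation*}
The exclusion $(q,r,d)\neq(2,\infty,3)$ is exactly the known failure of the forbidden endpoint and is built into the Keel--Tao hypotheses. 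Applying this to $P_k u_0$ and $P_k\omega^{-1}u_1$ (losing a factor $2^{-k}$ in the latter) gives \eqref{frstrich} on the homogeneous part, and an analogous $TT^*$ argument combined with the Christ--Kiselev lemma (needed precisely because the retarded time integration is not self-adjoint when $q\neq q_1'$) gives the inhomogeneous piece with the stated loss $2^{k\mu_2}$.

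Finally, to pass from the frequency localized bound to the global estimate, I would use the Littlewood--Paley square function characterization of $L^r_x$ for $1<r<\infty$, the Minkowski inequality to exchange the $\ell^2_k$ and $L^q_t L^r_x$ norms (since $q,r\geq 2$), and orthogonality in $L^2$ of the projections $P_k$ to absorb the weights $2^{k(\delta(r)-\frac1q)}2^{ks}=2^{k(\frac{d}{2}-\frac{d}{r}-\frac1q+s)}$, which vanish thanks to the scaling identity $\tfrac1q+\tfrac{d}r=\tfrac{d}2-s$. The inhomogeneous term is handled identically using the dual scaling $\tfrac1{q_1'}+\tfrac{d}{r_1'}=\tfrac{d}2-s+2$. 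The main technical obstacle is the book-keeping of admissibility and the endpoint $(q,r)=(2,\frac{2(d-1)}{d-3})$ for $d\geq 4$, where one must rely on the genuine Keel--Tao endpoint rather than the softer Ginibre--Velo argument; once the half-wave endpoint is in hand the rest is a routine consequence of Littlewood--Paley theory and Bernstein's inequalities recorded earlier in this section.
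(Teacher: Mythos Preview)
The paper does not give its own proof of this lemma; it is stated with citations to \cite{GiV95,KeT98,LS,Sogge} and used as a black box. Your outline is precisely the standard argument one finds in those references (frequency-localized dispersion, the $TT^*$/Keel--Tao machinery, Christ--Kiselev for the retarded inhomogeneous term, and Littlewood--Paley summation), so there is nothing to compare against.

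One minor slip: in your last step the exponent bookkeeping should read $2^{k(\delta(r)-\frac1q)}\cdot 2^{-ks}$ rather than $2^{+ks}$, since $\|P_kf\|_{L^2}\sim 2^{-ks}\|P_kf\|_{\dot H^s}$; with the correct sign the scaling relation $\tfrac1q+\tfrac dr=\tfrac d2-s$ indeed makes the net weight $2^{0}$, and the square-function summation goes through as you describe.
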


\subsection{Blow-up for non-positive energies} We recall that in the
case of the focusing equation, any nontrivial solution with
non-positive energy must blow-up in both time directions. Such
result was first proved in Killip, Stovall and Visan \cite{KSV}  for
the solutions to NLW \eqref{waveequ}.  By the same argument as
deriving Theorem 3 in \cite{KSV} with the extended  causality (c.f.
Lemma 2.5, \cite{MZZ})which  plays role of the finite speed propagation,
 we also have

\begin{proposition}[Blow-up for non-positive energies]\label{propbnpe} Let $u(t,x)$
be a solution to problem \eqref{equ1.1} with $\mu=-1$ and with maximal
interval of existence $I_{\max}=(T_-,T_+)$. If $E(u,u_t)\leq0$, then
$(u,u_t)(t)$ is either identically  zero or blows up in finite time
in both time directions, i.e. $T_->-\infty$ and $T_+<+\infty.$
\end{proposition}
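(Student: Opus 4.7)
The plan is to adapt Glassey's concavity/virial argument in the form carried out by Killip--Stovall--Visan \cite{KSV}, with the extended causality of \cite[Lemma~2.5]{MZZ} substituting for the usual finite speed of propagation in the localization step. Because the critical regularity $\dot H^{1/2}\times\dot H^{-1/2}$ does not force $u(t,\cdot)\in L^2_x$, the natural second moment must be truncated: fix $\phi\in C_c^\infty(\R^d)$ with $\phi\equiv 1$ on $|x|\le 1$, set $\phi_R(x)=\phi(x/R)$, and introduce
\[
 y_R(t):=\int_{\R^d}\phi_R(x)\,|u(t,x)|^2\,dx,
\]
which is finite on the existence interval once the $E\le 0$ hypothesis is used to put $u(t,\cdot)\in\dot H^1_{\mathrm{loc}}\subset L^2_{\mathrm{loc}}$.

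Differentiating $y_R$ twice in time and using the equation $u_{tt}=\Delta u+(V\ast|u|^2)u$ (recall $\mu=-1$, $V(x)=|x|^{-3}$), I would arrive at
\[
 y_R''(t)=2\!\int\!\phi_R|u_t|^2-2\!\int\!\phi_R|\nabla u|^2+2\!\int\!\phi_R(V\ast|u|^2)|u|^2+\cE_R(t),
\]
where $\cE_R(t)$ collects the commutators with $\phi_R$ coming from $\Delta$ and from the convolution $V\ast$. Substituting the energy identity
\[
 \int(V\ast|u|^2)|u|^2\,dx=2\!\int|u_t|^2+2\!\int|\nabla u|^2-4E(u,u_t)
\]
and invoking $E\le 0$ yields $y_R''\ge 6\int\phi_R|u_t|^2+2\int\phi_R|\nabla u|^2+\cE_R$. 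Combined with the Cauchy--Schwarz bound $(y_R')^2\le 4\,y_R\int\phi_R|u_t|^2$, this produces the classical concavity inequality
\[
 y_R(t)\,y_R''(t)\ge \tfrac{3}{2}(y_R'(t))^2,
\]
modulo $\cE_R$. Sending $R\to\infty$ (using extended causality to make $\cE_R$ vanish uniformly on compact time intervals) transfers the inequality to the limiting moment $y(t)$, so $y^{-1/2}$ is concave and positive.

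A positive concave function on $[t_0,T_+)$ either hits $0$ in finite time --- forcing $y\to\infty$ and hence blow-up of $\|u(t)\|_{L^2_{\mathrm{loc}}}$ before $T_+$, contradicting the maximal existence of $u$ unless $T_+<\infty$ --- or has nonpositive derivative throughout. In the second case, the strict lower bound $y''\ge 6\int|u_t|^2$ forces $u_t\equiv 0$, and then the stationary equation together with $E\le 0$ forces $u\equiv 0$. A time-reversal argument, permitted by the time symmetry of \eqref{equ1.1}, gives the analogous conclusion at $T_-$, completing the dichotomy. The \emph{main obstacle} is the nonlocal convolution nonlinearity: unlike in \cite{KSV}, the classical finite speed of propagation does not apply, and one must justify the virial identity at the truncated level and pass $R\to\infty$ while controlling the commutator of $\phi_R$ with $V\ast$; this is precisely where Lemma~2.5 of \cite{MZZ} enters.
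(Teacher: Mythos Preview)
Your proposal is correct and follows precisely the route the paper indicates: the paper does not give a detailed proof but simply states that Proposition~\ref{propbnpe} follows ``by the same argument as deriving Theorem~3 in \cite{KSV} with the extended causality (c.f.\ Lemma~2.5, \cite{MZZ}) which plays the role of the finite speed of propagation.'' Your sketch --- a truncated second-moment $y_R$, the Glassey concavity inequality $yy''\ge\tfrac32(y')^2$ from the energy identity with $E\le 0$, and the use of extended causality to control the commutator errors arising from the nonlocal convolution --- is exactly this plan made explicit. One minor quibble: the finiteness of $y_R(t)$ does not require invoking $E\le 0$; it already follows from $u(t)\in\dot H^{1/2}(\R^d)\hookrightarrow L^{2d/(d-1)}(\R^d)\subset L^2_{\mathrm{loc}}(\R^d)$.
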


\section{Extinction of two scenarios}
 \setcounter{section}{3}\setcounter{equation}{0}

In this section, we preclude two scenarios in the sense of Theorem
\ref{three} under the assumption that Theorem \ref{regular} and
Proposition \ref{propgdzzx} holds. And we will prove Theorem
\ref{regular} and Proposition \ref{propgdzzx} in the next section.

\subsection{High to low frequency cascade}

First, we preclude the high to low frequency cascade solution by
making use of  Theorem \ref{regular}.

\begin{theorem}[No high to low frequency cascade]\label{no frequency-cascades}
Let $d\geq4$. Then there are no radial almost periodic solutions
$u:(T_-,\infty)\times\R^d\to\R$ to problem \eqref{equ1.1} with $N(t)\leq 1$
on $\R^+$ such that
    \begin{equation}\label{rf blowup}
    \|u\|_{S((T_-,\infty))}=+\infty
    \end{equation}
and
    \begin{equation}\label{rf K}
    \varliminf_{t\to\infty}N(t)=0.
    \end{equation}
\end{theorem}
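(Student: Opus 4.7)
The plan is to derive a contradiction by showing that the conserved energy of any such solution must vanish, which is incompatible with either the defocusing structure or, in the focusing case, with Proposition \ref{propbnpe}.

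First, I would apply Theorem \ref{regular}: on $\R^+$ the solution satisfies
\begin{equation*}
\|(u,u_t)(t)\|_{\dot H^1_x\times L^2_x}\lesssim N(t)^{1/3}\quad(d=4)\text{ or }\quad N(t)^{1/2}\quad(d\geq 5).
\end{equation*}
In particular, $(u,u_t)$ has finite energy at every $t\in\R^+$. By the standard persistence-of-regularity argument together with the energy identity for $\dot H^1\times L^2$ solutions of \eqref{equ1.1}, the energy
\begin{equation*}
E(u,u_t)=\tfrac12\|(u,u_t)(t)\|_{\dot H^1_x\times L^2_x}^2+\tfrac{\mu}{4}\iint_{\R^d\times\R^d}\frac{|u(t,x)|^2|u(t,y)|^2}{|x-y|^3}\,dx\,dy
\end{equation*}
is conserved along the flow on $\R^+$.

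Second, I would estimate the non-local term at a fixed time. By the Hardy--Littlewood--Sobolev inequality with $p=\tfrac{2d}{2d-3}$, followed by the Sobolev embedding $\dot H^{3/4}\hookrightarrow L^{4d/(2d-3)}$ and interpolation between $\dot H^{1/2}$ and $\dot H^1$ (note $\tfrac34=\tfrac12\cdot\tfrac12+\tfrac12\cdot 1$),
\begin{equation*}
\Big|\iint\frac{|u(t,x)|^2|u(t,y)|^2}{|x-y|^3}\,dx\,dy\Big|\lesssim \|u(t)\|_{L^{4d/(2d-3)}_x}^4\lesssim \||\nabla|^{1/2}u(t)\|_{L^2_x}^{2}\,\|\nabla u(t)\|_{L^2_x}^{2}.
\end{equation*}
Combining this with the uniform $\dot H^{1/2}$-bound granted by almost periodicity and with Theorem \ref{regular}, both the kinetic term $\tfrac12\|(u,u_t)(t)\|_{\dot H^1\times L^2}^2$ and the potential term tend to $0$ along any sequence $t_n\to+\infty$ with $N(t_n)\to 0$, which exists by \eqref{rf K}. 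Hence $E(u,u_t)(t_n)\to 0$, and conservation yields $E(u,u_t)\equiv 0$ on $\R^+$.

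Finally, I would conclude as follows. In the defocusing case $\mu=1$ each summand in $E$ is non-negative, so $E\equiv 0$ forces $\nabla u(t)\equiv 0\equiv u_t(t)$; combined with the Sobolev embedding into $L^{4d/(2d-3)}_x$, this gives $u\equiv 0$, contradicting \eqref{rf blowup}. In the focusing case $\mu=-1$ the solution is nontrivial yet $E(u,u_t)=0\leq 0$, so Proposition \ref{propbnpe} forces finite-time blow-up in both time directions, contradicting the assumption $T_+=+\infty$. The main obstacle is turning the assumption $\varliminf_t N(t)=0$ into genuine decay of the non-local term; this is exactly what the HLS/Sobolev/interpolation chain above accomplishes once the additional regularity of Theorem \ref{regular} is in hand.
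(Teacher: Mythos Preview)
Your proposal is correct and follows essentially the same route as the paper: invoke Theorem~\ref{regular} to obtain $\|(u,u_t)(t)\|_{\dot H^1\times L^2}\lesssim N(t)^{1/3}$, control the potential term via the HLS/Sobolev/interpolation chain $\|u\|_{L^{4d/(2d-3)}}^4\lesssim\|u\|_{\dot H^{3/4}}^4\lesssim\|u\|_{\dot H^{1/2}}^2\|u\|_{\dot H^1}^2$, conclude $E(u,u_t)=0$ along a sequence and hence identically by conservation, and finish with Proposition~\ref{propbnpe} (the defocusing case being immediate). Your treatment is slightly more explicit than the paper's in justifying energy conservation and in separating the defocusing and focusing endgames, but the argument is the same.
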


\begin{proof}
 We argue by contradiction. Assume that $u$  were such a solution. Using Theorem \ref{regular},
 we obtain
\begin{equation}
\|(u,u_t)\|_{\dot H^{1}_x\times L^2_x}\lesssim N(t)^\frac13,\quad
\forall~t\in\R^+.
\end{equation}
Combining this inequality with \eqref{rf K}, we get
$$\varliminf_{t\to\infty}\|(u,u_t)\|_{\dot H^{1}_x\times
L^2_x}\lesssim\varliminf_{t\to\infty}N(t)^\frac13=0.$$ On the other
hand, by the H\"older inequality, the Hardy-Littlewood-Sobolev
inequality, the Sobolev embedding and interpolation, we estimate the
potential energy as follows
\begin{align*}
\big\|(|x|^{-3}\ast|u|^2)|u|^2\big\|_{L_x^1}\lesssim
\|u\|_{L_x^\frac{4d}{2d-3}}^4\lesssim\|u\|_{\dot
H^\frac34}^4\lesssim \|u\|_{\dot
H^{1/2}}^{2}\|u\|_{\dot H^1}^{2}\to0,  \;\; \text{as}\; t\to\infty.
\end{align*}
Hence, we have
$$\varliminf_{t\to\infty} E(u,u_t)(t)=0.$$
This together with energy conservation yields
$$E(u_0,u_1)=0.$$
Combining this with Proposition \ref{propbnpe} if $\mu=-1$, we get
$u(t)\equiv0$. This contradicts with \eqref{rf blowup}. And so we
complete the proof of Theorem \ref{no frequency-cascades}.
\end{proof}

\subsection{The soliton-like solution}
Next, we turn to exclude the soliton-like solution under the
assumption that  Proposition \ref{propgdzzx} holds.

\begin{theorem}[No soliton-like solution]\label{no soliton-like solution}
Let $d\geq4$. Then, there are no radial almost periodic solutions
$u:\R\times\R^d\to\R$ to problem \eqref{equ1.1} with $N(t)\equiv 1$ on $\R$
such that
    \begin{equation}\label{equsollkin}
    \|u\|_{S(\R)}=\infty.
    \end{equation}
\end{theorem}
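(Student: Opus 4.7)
The plan is to argue by contradiction: assume such a soliton-like $u$ exists, then combine the extra regularity and precompactness supplied by Proposition \ref{propgdzzx} with a virial identity adapted to the $\dot H^{1/2}$-critical scaling to force $E(u,u_t)=0$, and finally derive a contradiction via Proposition \ref{propbnpe} in the focusing case or via positivity of the energy in the defocusing case. The first step is to record what Proposition \ref{propgdzzx} gives: the orbit $\{(u,u_t)(t):t\in\R\}$ is precompact in $(\dot H^{1/2}\times\dot H^{-1/2})\cap(\dot H^1\times L^2)$, so in particular $\|(u,u_t)(t)\|_{\dot H^1\times L^2}\leq C$ uniformly in $t$, and almost periodicity with $N(t)\equiv 1$ (together with radiality, giving $x(t)\equiv 0$) yields for every $\eta>0$ a radius $R_0(\eta)$ such that $\int_{|x|\geq R_0(\eta)}(|\nabla u|^2+u_t^2)\,dx\leq\eta$ uniformly in $t$.

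The second step is to introduce the natural virial functional adapted to the scaling $(u,u_t)\mapsto(\lambda^{(d-1)/2}u,\lambda^{(d+1)/2}u_t)$. I would pick a smooth radial cutoff $\phi_R(x)=\phi(|x|/R)$ equal to $1$ on $\{|x|\leq R\}$ and supported in $\{|x|\leq 2R\}$, and set $V_R(t)=\int_{\R^d}\phi_R(x)\bigl(x\cdot\nabla u+\tfrac{d-1}{2}u\bigr)u_t\,dx$. Differentiating in $t$ using \eqref{equ1.1} and integrating by parts, the linear part should produce $-\tfrac12\|u_t\|_2^2-\tfrac12\|\nabla u\|_2^2$; the Hartree nonlinearity should then contribute $-\tfrac{\mu}{4}\int(|x|^{-3}\ast|u|^2)|u|^2\,dx$ via double-variable symmetrization exploiting the homogeneity of $|x-y|^{-3}$ at the critical exponent $\gamma=3$. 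In total the identity $\tfrac{d}{dt}V_R(t)=-E(u,u_t)+\mathcal E_R(t)$ should emerge, where $\mathcal E_R$ is a boundary term supported in $\{R\leq|x|\leq 2R\}$. Using the tail bound from Step 1, together with Hardy--Littlewood--Sobolev to control the nonlocal Hartree contribution to $\mathcal E_R$, one obtains $|\mathcal E_R(t)|\leq C\eta$ uniformly in $t$ whenever $R\geq R_0(\eta)$.

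The third step closes the argument. Cauchy--Schwarz together with the uniform $\dot H^1\times L^2$ bound yields $|V_R(t)|\leq CR$, so integrating the identity over $[0,T]$ and using energy conservation gives $T\,|E(u,u_t)|\leq 2CR+C\eta T$; dividing by $T$, sending $T\to\infty$ and then $\eta\to 0$ forces $E(u,u_t)=0$. In the defocusing case $\mu=+1$ the non-negativity of each term of $E$ then forces $\nabla u\equiv 0$ and $u_t\equiv 0$, hence $u\equiv 0$, contradicting \eqref{equsollkin}. In the focusing case $\mu=-1$, Proposition \ref{propbnpe} applies and yields either $u\equiv 0$ or finite-time blowup---both incompatible with the hypotheses ($u$ global on $\R$ with $\|u\|_{S(\R)}=\infty$). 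The main technical obstacle lies in Step 2: one must verify by explicit double-variable symmetrization that the Pohozaev identity for the nonlocal Hartree term at the critical exponent $\gamma=3$ combines exactly with the linear virial contribution to yield $-E$, and control the error $\mathcal E_R$ from the truncation of the nonlocal kernel uniformly in $t$ using only the tail bound and the $\dot H^1\times L^2$ compactness.
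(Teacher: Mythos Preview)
Your proposal is correct and matches the paper's approach: the truncated virial identity (the paper's Lemma \ref{vir3drw}) together with the precompactness of Proposition \ref{propgdzzx} forces $E(u,u_t)=0$, after which Proposition \ref{propbnpe} (or positivity of the energy in the defocusing case) yields the contradiction. Two minor caveats: the paper couples $R=T\to\infty$ rather than fixing $R$ and sending $T\to\infty$ as you do, and some error terms---notably $\int uu_r\chi_R'\,r^{d-1}\,dr$ and the nonlinear tail $\int(|x|^{-3}\ast|u|^2)uu_r(1-\chi_R)|x|\,dx$, which carries an extra weight $|x|$ from the multiplier and requires the radial Sobolev bound $\||x|u\|_{L^{2d/(d-4)}}\lesssim\|\nabla u\|_{L^2}$ (or $\||x|u\|_{L^\infty}\lesssim\|u\|_{\dot H^1}$ when $d=4$)---are only $O(\eta^{1/2})$ rather than the $O(\eta)$ you assert, but this does not affect the conclusion.
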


First, we need to establish the following virial identity.

\begin{lemma}[virial identity]\label{vir3drw}
Let $\vec{u}:=(u,u_t)\in(\dot H^1\times L^2)\cap(\dot
H^\frac12\times\dot H^{-\frac12})$ be a solution to problem
\eqref{equ1.1}. Then, for any $R>0$, we have
\begin{equation}\label{equlem1.23dw}
\begin{split}
&\frac{d}{dt}\Big\langle u_t,
\chi_R\big(\tfrac{d-1}2u+ru_r\big)\Big\rangle\\
=&-\frac1{|\mathbb{S}^{d-1}|}E(\vec u)+\int(1-\chi_R)\Big[\frac12|u_r|^2+\frac12|u_t|^2\pm\frac{d-1}2(|x|^{-3}\ast|u|^2)|u|^2\Big]r^{d-1}\,\mathrm{d}r\\
&-\int\Big[\frac12|u_r|^2+\frac12|u_t|^2\Big]\chi_R'r^d\,\mathrm{d}r-\frac{d-1}2\int
uu_r\chi_R'r^{d-1}\,\mathrm{d}r\\
&\pm\int (|x|^{-3}\ast|u|^2)uu_r(1-\chi_R)r^d\,\mathrm{d}r,
\end{split}
\end{equation}
where
$$\langle f,g\rangle\triangleq \int_0^\infty
f(r)g(r)r^{d-1}\,\mathrm{d}r,
$$
and $\chi_R(r)=\chi(r/R),~\chi\in C_c^\infty(\R^+)$ with
$$\chi(r)=\begin{cases} 1,\quad 0\leq r\leq 1,\\
0,\quad r\geq2.\end{cases}$$
\end{lemma}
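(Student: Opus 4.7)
Set $M := \tfrac{d-1}{2}u + r u_r$. The strategy is to first establish the full-space virial identity $\frac{d}{dt}\langle u_t, M\rangle = -\tfrac{1}{|\mathbb{S}^{d-1}|}E(\vec u)$ (the $\chi_R\equiv 1$ version), and then separately compute the cutoff defect $\frac{d}{dt}\langle u_t, (1-\chi_R)M\rangle$ by direct integration by parts. The claimed identity follows by subtraction, using $\chi_R = 1 - (1-\chi_R)$.

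For the full-space identity, split via $\frac{d}{dt}\langle u_t, M\rangle = \langle u_{tt}, M\rangle + \langle u_t, \partial_t M\rangle$ and substitute $u_{tt} = \Delta u - \mu(|x|^{-3}\ast|u|^2)u$. The kinetic piece $\langle u_t, \tfrac{d-1}{2}u_t + r u_{rt}\rangle$ reduces to $-\tfrac{1}{2}\int u_t^2 r^{d-1}dr$ after one integration by parts in $r$; using the radial form $\Delta u = r^{1-d}(r^{d-1}u_r)_r$, the linear piece $\langle \Delta u, M\rangle$ reduces to $-\tfrac{1}{2}\int u_r^2 r^{d-1}dr$ after two integrations by parts. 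The nonlinear piece is the key step: converting $\langle (|x|^{-3}\ast|u|^2)u, M\rangle$ to an integral over $\mathbb{R}^d$, writing $u(x\cdot\nabla u) = \tfrac{1}{2}x\cdot\nabla|u|^2$, integrating by parts once in $x$, and then symmetrizing the resulting double integral by the swap $x\leftrightarrow y$ using the kernel symmetry $V(x-y) = V(y-x)$ together with the homogeneity $z\cdot\nabla V(z) = -3V(z)$ for $V(z) = |z|^{-3}$, yields a net nonlinear contribution of $-\tfrac{\mu}{4|\mathbb{S}^{d-1}|}\iint |x-y|^{-3}|u(x)|^2|u(y)|^2\,dxdy$. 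Summing the three pieces and comparing with the definition of $E(\vec u)$ gives the full-space identity.

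For the defect $\frac{d}{dt}\langle u_t, (1-\chi_R)M\rangle$, I would run the same differentiation and integration-by-parts routine; each integration by parts now generates an additional boundary piece proportional to $\partial_r(1-\chi_R) = -\chi_R'$. The kinetic and linear parts yield $-\tfrac{1}{2}\int(1-\chi_R)(u_t^2+u_r^2)r^{d-1}dr + \tfrac{1}{2}\int\chi_R'(u_t^2+u_r^2)r^d dr + \tfrac{d-1}{2}\int\chi_R' u u_r r^{d-1}dr$, and the nonlinear part contributes $-\mu\tfrac{d-1}{2}\int(1-\chi_R)(|x|^{-3}\ast|u|^2)|u|^2 r^{d-1}dr - \mu\int(|x|^{-3}\ast|u|^2)uu_r(1-\chi_R)r^d dr$, which is kept in raw form because it appears verbatim on the right-hand side of the statement (no convolution symmetrization needed here). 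Subtracting this defect from the full-space identity produces precisely the claimed formula, with $\pm = \mu$.

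The principal obstacle is the nonlinear symmetrization in the full-space step. Unlike the case of a local power nonlinearity $|u|^p u$ — where a single integration by parts produces a constant times $\int|u|^{p+2}dx$ — the Hartree term requires combining the kernel symmetry with its homogeneity inside a double integration by parts. Concretely, the coefficient $\tfrac{1}{4}$ in the potential-energy contribution arises as $\tfrac{d-1}{2} - \tfrac{2d-3}{4} = \tfrac{1}{4}$, the first summand coming from the $\tfrac{d-1}{2}u$ part of $M$ and the second (via the symmetrization) from the $ru_r$ part; the fact that this is independent of $d$ reflects the $\dot H^{1/2}$-criticality at $\gamma=3$. Once this algebraic identity is established, the remainder of the argument is bookkeeping of the cutoff boundary terms.
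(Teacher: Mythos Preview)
Your proposal is correct and essentially the same as the paper's proof: both rely on the same key symmetrization identity $\int_{\mathbb R^d}(|x|^{-3}\ast|u|^2)\,u\,x\cdot\nabla u\,dx=(-\tfrac d2+\tfrac34)\iint|x-y|^{-3}|u(x)|^2|u(y)|^2\,dx\,dy$, and the paper in fact treats the nonlinear piece exactly by your full-space/defect split (writing $\chi_R=1-(1-\chi_R)$ for the term $\int\chi_R(|x|^{-3}\ast|u|^2)uu_r\,r^d\,dr$). The only difference is organizational: you apply that split uniformly to all three pieces from the outset, whereas the paper computes the kinetic and linear parts directly with $\chi_R$ in place and only splits the nonlinear term---the resulting identity is of course the same.
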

\begin{proof} We compute  by Leibniz rule
\begin{align}\nonumber
&\frac{d}{dt}\Big\langle u_t,
\chi_R\big(\tfrac{d-1}2u+ru_r\big)\Big\rangle\\\nonumber =&\int
u_{tt}\chi_R\big(\tfrac{d-1}2u+ru_r\big)r^{d-1}\,\mathrm{d}r+\int
u_t\chi_R\big(\tfrac{d-1}2u_t+r\partial_ru_t\big)r^{d-1}\,\mathrm{d}r\\\label{equvirgj}
\triangleq&I_1+I_2.
\end{align}
{\bf The computation of $I_2$:} Using integration by part, we obtain
\begin{align*}
I_2=&\frac{d-1}2\int |u_t|^2\chi_Rr^{d-1}dr+\int u_t\partial_ru_t\chi_Rr^d\,\mathrm{d}r\\
=&\frac{d-1}2\int |u_t|^2\chi_Rr^{d-1}\,\mathrm{d}r-\frac12\int
|u_t|^2\partial_r\big(\chi_Rr^d\big)\,\mathrm{d}r\\
=&-\frac{1}2\int |u_t|^2\chi_Rr^{d-1}\,\mathrm{d}r-\frac12\int
|u_t|^2\chi_R'r^d\,\mathrm{d}r.
\end{align*}
{\bf The computation of $I_1$:} By equation \eqref{equ1.1} and the
radial assumption, we rewrite
$$u_{tt}=\Delta u\mp
(|x|^{-3}\ast|u|^2)u=\partial_{r}^2u+\tfrac{d-1}{r}\partial_ru\mp
 (|x|^{-3}\ast|u|^2)u.$$
 Plugging this equality into $I_1$, one has
\begin{align*}
I_1=&\int
\partial_{r}^2u\chi_R\big(\tfrac{d-1}2u+ru_r\big)r^{d-1}\,\mathrm{d}r+(d-1)\int
\partial_ru\chi_R\big(\tfrac{d-1}2u+ru_r\big)r^{d-2}\,\mathrm{d}r\\
&\mp\int  (|x|^{-3}\ast|u|^2)u\chi_R\big(\tfrac{d-1}2u+ru_r\big)r^{d-1}\,\mathrm{d}r\\
\triangleq&I_{11}+I_{12}\pm I_{13}.
\end{align*}

First, we consider the contribution of $I_{11}+I_{12}$.
Integrating by part, we derive that
\begin{align*}
I_{11}=&\frac{d-1}2\int \partial_{r}^2u
u\chi_Rr^{d-1}\,\mathrm{d}r+\int
\partial_{r}^2uu_r\chi_Rr^d\,\mathrm{d}r\\
=&-\frac{d-1}2\int|u_r|^2\chi_Rr^{d-1}\,\mathrm{d}r-\frac{d-1}2\int
u_ru\partial_r\big(\chi_Rr^{d-1}\big)\,\mathrm{d}r-\frac12\int|u_r|^2\partial_r\big(\chi_Rr^d\big) \,\mathrm{d}r\\
=&-\frac{2d-1}2\int|u_r|^2\chi_Rr^{d-1}\,\mathrm{d}
r+\frac{d-1}4\int|u|^2\partial_r^2\big(\chi_Rr^{d-1}\big)\,\mathrm{d}r-\frac12\int|u_r|^2\chi_R'r^d\,\mathrm{d}r,
\end{align*}
and
\begin{align*}
I_{12}=&\frac{(d-1)^2}2\int
\partial_ruu\chi_Rr^{d-2}\,\mathrm{d}r+(d-1)\int
|u_r|^2\chi_Rr^{d-1}\,\mathrm{d}r\\
=&-\frac{(d-1)^2}4\int
|u|^2\big(\chi_Rr^{d-2}\big)_r\,\mathrm{d}r+(d-1)\int
|u_r|^2\chi_Rr^{d-1}\,\mathrm{d}r.
\end{align*}
Hence,
\begin{align*}
I_{11}+I_{12}=&-\frac{1}2\int|u_r|^2\chi_Rr^{d-1}\,\mathrm{d}r+\frac{d-1}4\int|u|^2\partial_r\big(\chi_R'r^{d-1}\big)\,\mathrm{d}r
-\frac12\int|u_r|^2\chi_R'r^d\,\mathrm{d}r\\
=&-\frac{1}2\int|u_r|^2\chi_Rr^{d-1}\,\mathrm{d}r-\frac{d-1}2\int
uu_r\chi_R'r^{d-1}\,\mathrm{d}r-\frac12\int|u_r|^2\chi_R'r^d\,\mathrm{d}r.
\end{align*}

Second, we turn to consider the contribution of the term $I_{13}$.
Since
$$\iint_{\mathbb{R}^d\times\mathbb{R}^d}\frac{x\cdot(x-y)}{|x-y|^{5}}|u(x)|^2|u(y)|^2dxdy
=-\iint_{\mathbb{R}^d\times\mathbb{R}^d}\frac{y\cdot(x-y)}{|x-y|^{5}}|u(x)|^2|u(y)|^2dxdy,$$
we obtain
\begin{equation*}
\int_{\mathbb{R}^d}(|x|^{-3}*|u|^2)u(x) x\cdot\nabla u(x)
dx=\Big(-\frac{d}{2}+\frac{3}{4}\Big)\iint_{\mathbb{R}^d\times
\mathbb{R}^d}\frac{|u(x)|^2|u(y)|^2}{|x-y|^{3}}dxdy.
\end{equation*}
This symmetrical identity implies
\begin{align*}
I_{13}=&\frac{d-1}2\int \chi_R(|x|^{-3}\ast|u|^2)|u|^2r^{d-1}\,\mathrm{d}r+\int (|x|^{-3}\ast|u|^2)uu_rr^d\,\mathrm{d}r\\
&-\int (|x|^{-3}\ast|u|^2)uu_r(1-\chi_R)r^d\,\mathrm{d}r\\
 =&\frac14\int
(|x|^{-3}\ast|u|^2)|u|^2r^{d-1}\,\mathrm{d}r-\frac{d-1}2\int
(1-\chi_R)(|x|^{-3}\ast|u|^2)|u|^2r^{d-1}\,\mathrm{d}r\\
&-\int (|x|^{-3}\ast|u|^2)uu_r(1-\chi_R)r^d\,\mathrm{d}r.
\end{align*}

Plugging the above computations into \eqref{equvirgj}, we get
\begin{align*}
&\frac{d}{dt}\big\langle u_t, \chi_R(u+ru_r)\big\rangle\\
=&-\int\Big[\frac12|u_r|^2+\frac12|u_t|^2\pm\frac14(|x|^{-3}\ast|u|^2)|u|^2\Big]r^{d-1}\,\mathrm{d}r\\
&+\int\Big[\frac12|u_r|^2+\frac12|u_t|^2\pm\frac{d-1}2(|x|^{-3}\ast|u|^2)|u|^2\Big](1-\chi_R)r^{d-1}\,\mathrm{d}r\\
&-\int\Big[\frac12|u_r|^2+\frac12|u_t|^2\Big]\chi_R'r^d\,\mathrm{d}r-\frac{d-1}2\int
uu_r\chi_R'r^{d-1}\,\mathrm{d}r\\
&\pm\int (|x|^{-3}\ast|u|^2)uu_r(1-\chi_R)r^d\,\mathrm{d}r\\
=&-\frac1{|\mathbb{S}^{d-1}|}E(\vec u)+\int(1-\chi_R)\Big[\frac12|u_r|^2+\frac12|u_t|^2\pm\frac{d-1}2(|x|^{-3}\ast|u|^2)|u|^2\Big]r^{d-1}\,\mathrm{d}r\\
&-\int\Big[\frac12|u_r|^2+\frac12|u_t|^2\Big]\chi_R'r^d\,\mathrm{d}r-\frac{d-1}2\int
uu_r\chi_R'r^{d-1}\,\mathrm{d}r\\
&\pm\int (|x|^{-3}\ast|u|^2)uu_r(1-\chi_R)r^d\,\mathrm{d}r.
\end{align*}

\noindent Thus, we complete the proof of this lemma.
\end{proof}

\vskip 0.2in

\begin{proof}[The proof of Theorem \ref{no soliton-like solution}]\;
 We argue by contradiction. Assume that $u$ were such a solution.
 We claim that
\begin{equation}\label{claimvir}
\forall~\eta>0,~E(u,u_t)\leq C\eta^\frac12.
\end{equation}
Combining this claim with Proposition \ref{propbnpe} if $\mu=-1$, we
get $u(t)\equiv0$. This contradicts with \eqref{equsollkin}. Thus,
we preclude the soliton solution in the sense of Theorem \ref{three}
under the assumption that  Proposition \ref{propgdzzx} holds.\end{proof}

\vskip 0.2in

Next, we use Proposition \ref{propgdzzx} and Lemma \ref{vir3drw} to
show the claim \eqref{claimvir}.
\vskip0.1cm

{\bf $\bullet$ The proof of claim \eqref{claimvir}:} By  Proposition
\ref{propgdzzx}, we deduce that for any $\eta>0$, there exists a
$R_0=R_0(\eta)>0$ such that for any $R\geq R_0(\eta)$
\begin{equation}\label{compatequ}
\int_R^{+\infty}\big[u_t^2+u_r^2\big](t)r^{d-1}\,\mathrm{d}r<\eta.
\end{equation}
This inequality together with Sobolev embedding
$$\dot H^{1/2}(\R^d)\cap\dot
H^1(\R^d)\hookrightarrow
L_x^\frac{4d}{2d-3}(\R^d)$$
yields
\begin{equation}
\int_R^{+\infty} |u(t)|^\frac{4d}{2d-3}r^{d-1}\,\mathrm{d}r<\eta.
\end{equation}
Hence, using the H\"older inequality and the Hardy-Littlewood-Sobolev
inequality, we obtain
\begin{equation}
\left\{\begin{aligned}  &\Big|\int_0^{+\infty}
(1-\chi_R)\Big(\frac12u_t^2+\frac12u_r^2\pm\frac{d-1}2(|x|^{-3}\ast|u|^2)|u|^2\Big)r^{d-1}\,\mathrm{d}r\Big|\leq
C\eta\\
&\Big|\int_0^{+\infty}\Big(\frac12u_t^2+\frac12u_r^2\Big)r\chi_R'
r^{d-1}\,\mathrm{d}r \Big|\lesssim \eta.
\end{aligned}\right.
\end{equation}
On the other hand, by H\"older's inequality, Hardy's inequality, the
radial Sobolev inequality and Proposition \ref{propgdzzx}, we get
 \begin{align*}
\Big|\int_0^{+\infty}uu_rr\chi_{R}'r^{d-2}\;dr\Big|\lesssim&\Big(\int_{R}^{2R}u_r^{2}
r^{d-1}\;dr\Big)^{\frac12}\Big(\int_{R}^{2R}u^{2}
r^{d-3}\;dr\Big)^{\frac12}\\
\lesssim&\eta^{\frac12}\Big(\int_{\R^{d}}\frac{u^{2}}{|x|^{2}}\;dx\Big)^{\frac{1}{2}}\\
\lesssim&\eta^{\frac12}\|u\|_{\dot H^1_x(\R^d)}\lesssim
\eta^{\frac12},
\end{align*}
and
\begin{align*}
\Big|\int_0^{+\infty}(|x|^{-3}\ast|u|^{2})uu_r(1-\chi_{R})r^d\;dr\Big|
\lesssim&\||x|^{-3}\ast|u|^{2}\|_{L^{\frac{d}{2}}_x(\R^d)}\Big(\int_R^{+\infty}u_r^{2}r^{d-1}
\;dr\Big)^{\frac{1}{2}}\big\||x|u\big\|_{L^{\frac{2d}{d-4}}_x(\R^d)}\\
\lesssim&\|u\|_{\dot H^{\frac12}_x(\R^d)}^{2}\eta^{\frac12}\|\nabla u\|_{L^{2}_x(\R^d)}\\
\lesssim&\eta^{\frac12}.
\end{align*}

\noindent Integrating the inequality \eqref{equlem1.23dw} from  $0$ to
$T$ in time and setting $R=T\gg R_0(\eta)$, one has
\begin{align*}
E(u,u_t)\leq& C\eta^\frac12+\frac1T\Big|\big\langle
u_t(t),~\chi_R\big(\tfrac{d-1}2u+ru_r\big)(t)\big\rangle\big|_0^T\Big|\\
\leq&C\eta^\frac12+\frac{C}T\int_0^T|u_t(T)|\cdot|u(T)|r^{d-1}dr+\frac{C}{T}\int_0^T|u_t(0)|\cdot|u(0)|r^{d-1}\,\mathrm{d}r\\
&+\frac{C}T\int_0^T|u_t(T)|\cdot|u_r(T)|r^ddr+\frac{C}T\int_0^T|u_t(0)|\cdot|u_r(0)|r^d\,\mathrm{d}r.
\end{align*}
On the other hand, by the H\"older inequality, the Sobolev embedding and
\eqref{compatequ}, we get
\begin{align*}
\frac{1}T\int_0^T|u_t|\cdot|u|r^{d-1}
\,\mathrm{d}r\leq&\frac1T\Big(\int_0^T|u_t|^2r^{d-1}\,\mathrm{d}r\Big)^\frac12\Big(\int_0^T|u|^\frac{2d}{d-1}r^{d-1}\,\mathrm{d}r\Big)^\frac{d-1}{2d}
\Big(\int_0^Tr^{d-1}\,\mathrm{d}r\Big)^\frac1{2d}
\\
\leq&\frac{C}{T^\frac12}\|u_t\|_{L_x^2(\R^d)}\|u\|_{\dot
H^\frac12_x(\R^d)},
\end{align*}
and
\begin{align*}
\frac{1}T\int_0^T|u_t|\cdot|u_r|r^d\,\mathrm{d}r
\leq&\frac{R_0(\eta)}T\int_0^{R_0(\eta)}|u_t|\cdot|u_r|r^{d-1}\,\mathrm{d}r+\frac{1}T\int_{R_0(\eta)}^T|u_t|\cdot|u_r|r^d\,\mathrm{d}r\\
\leq&\frac{R_0(\eta)}T\|u_t\|_2\|u\|_{\dot
H^1}+\Big(\int_{R_0(\eta)}^{+\infty}
u_t^2r^{d-1}\,\mathrm{d}r\Big)^\frac12\Big(\int_{R_0(\eta)}^{+\infty}
u_r^2r^{d-1}\,\mathrm{d}r\Big)^\frac12\\
\leq&\frac{R_0(\eta)}T+\eta.
\end{align*}
Thus,
$$E(u_0,u_1)\leq  C\eta^\frac12+\frac{C}{T^\frac12}+C\frac{R_0(\eta)}T.$$
Letting $T\to+\infty$, we obtain the claim \eqref{claimvir}.

\vskip 0.1cm

In sum, it suffices to prove Theorem \ref{regular} and Proposition
\ref{propgdzzx} which will be shown in the next section.

\section{Additional regularity}
 \setcounter{section}{4}\setcounter{equation}{0}

As stated in Section 3, it remains to show Theorem \ref{regular} and
Proposition \ref{propgdzzx}. More precisely, we need to prove that
the
 solutions which are radial almost periodic modulo symmetries enjoy
the additional regularity.

\subsection{Proof of  Theorem \ref{regular}}
In this subsection, we  will divide two steps to prove  Theorem
\ref{regular}. First, we show that the almost periodic solutions in
Theorem \ref{three} lie in $\dot H^\frac23(\R^d)\times\dot
H^{-\frac13}(\R^d)$ when $t\in\R^+$ by bootstrap argument. And then
we utilize this result to show  Theorem \ref{regular}.

\begin{theorem}\label{thmpartial}
Let $u:(T_-,+\infty)\times\R^d\to\R$ be a radial solution of problem \eqref{equ1.1} which is
almost periodic modulo symmetries in the sense of Theorem
\ref{three}. Then for any $t_0\in \R^+$, we have
\begin{equation}\label{equbfzzx3dwr}
\big\| (u,u_t)(t_0)\big\|_{\dot H^\frac23_x\times\dot
H^{-\frac{1}3}_x}\lesssim N(t_0)^{\frac16}.
\end{equation}
\end{theorem}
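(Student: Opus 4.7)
The plan is to prove, for dyadic $N \gtrsim N(t_0)$, a quantitative frequency-decay of the form
\begin{equation*}
M(N, t_0) := N^{1/2}\|P_N u(t_0)\|_{L^2_x} + N^{-1/2}\|P_N u_t(t_0)\|_{L^2_x} \lesssim (N(t_0)/N)^{\sigma}
\end{equation*}
with $\sigma$ strictly greater than $1/6$. Combined with the trivial bound $M(N, t_0) \lesssim 1$ on the range $N \lesssim N(t_0)$ (from \eqref{assume1.1} and Bernstein), this will yield \eqref{equbfzzx3dwr} by weighting with $N^{1/3}$ and summing. Almost periodicity \eqref{apsf} already gives qualitative smallness $M(N, t_0) \to 0$ as $N/N(t_0) \to \infty$, but one needs quantitative power decay; the mechanism for this is the No-waste Duhamel formula from Lemma \ref{lemnwdf} plus a bootstrap in the exponent.

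First I would apply Lemma \ref{lemnwdf} in forward form to the high-frequency projection $P_N u(t_0)$, writing
\begin{equation*}
\binom{P_N u(t_0)}{P_N u_t(t_0)} = \lim_{T \nearrow \infty} \int_{t_0}^{T} V_0(t_0 - s)\binom{0}{P_N f(u(s))}\, \mathrm{d}s,
\end{equation*}
and then invoking the frequency-localized Strichartz estimate \eqref{frstrich} with a well-chosen admissible pair so that the linear data term disappears. This reduces matters to a spacetime-norm bound on $P_N f(u) = P_N\bigl((|x|^{-3} \ast |u|^2)\, u\bigr)$. The heart of the argument is a trilinear estimate: a Littlewood--Paley trichotomy forces at least one of the three input factors of $f$ to carry frequency $\gtrsim N$; I would use the Hardy--Littlewood--Sobolev inequality to absorb the kernel $|x|^{-3}$, Bernstein to redistribute derivatives, and the almost-periodic localization \eqref{apsf} to extract a decay factor of $(N(t_0)/N)^{\alpha}$ from those factors remaining at frequency $\lesssim N(t_0)$.

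The argument is closed by bootstrap. Starting from the input $M(N, t_0) \lesssim 1$ provided by \eqref{assume1.1}, one inserts this into the multilinear bound for $P_N f(u)$ to obtain an improved estimate $M(N, t_0) \lesssim (N(t_0)/N)^{\sigma_1}$ with $\sigma_1 > 0$. Feeding the improvement back into the nonlinear bound, the exponent strictly increases at each step and converges to the target $\sigma > 1/6$ at which the Sobolev/Hardy--Littlewood--Sobolev admissibility exponents saturate. This is the same bootstrap scheme employed in \cite{DL}, adapted to the non-local nonlinearity $(|x|^{-3}\ast|u|^2)u$.

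The main obstacle lies in the nonlocal structure of $f(u)$. Since $\widehat{|x|^{-3}}(\xi) \sim |\xi|^{-(d-3)}$, the convolution $|x|^{-3} \ast |u|^2$ has global frequency support and cannot be split cleanly into high and low blocks, so the trichotomy must be performed carefully to guarantee a genuine gain in $N$ at each iterate. In the lowest admissible dimension $d = 4$ the estimates are especially tight; here the radial Sobolev embedding of Section 2 is essential, converting the $\dot H^{1/2}$ control on the radial $u$ into pointwise weighted decay of the form $|x|^{-\beta} |u|$, which supplies the additional integrability required to close the trilinear bound with a strictly positive gain in $N$. This is precisely the technical point flagged in the remark following Theorem \ref{theorem} as the source of the radial hypothesis.
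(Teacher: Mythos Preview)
Your proposal has a genuine gap. You plan to bound $M(N,t_0)$ by writing the forward No-waste Duhamel formula for $P_N(u,u_t)(t_0)$ and then applying a frequency-localized Strichartz estimate over $[t_0,\infty)$. But the limit in Lemma~\ref{lemnwdf} is only a \emph{weak} limit in $\dot H^{1/2}\times\dot H^{-1/2}$; to upgrade it to a norm bound via Strichartz you would need a dual Strichartz norm of $P_N f(u)$ to be finite on the infinite interval $[t_0,\infty)$. For an almost periodic solution as in Theorem~\ref{three} (in particular the soliton-like case $N(t)\equiv1$) all Strichartz spacetime norms on $[t_0,\infty)$ are infinite by construction, so the trilinear bound you describe cannot be closed on that interval and the bootstrap in the exponent never starts. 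This is exactly why the present problem differs from the cubic NLS treated in \cite{KM2010}, where a scaling-critical Morawetz inequality supplies global spacetime control; no such estimate is available here.

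The paper circumvents this with the \emph{double} Duhamel trick, which is missing from your outline. One writes $a_k(0)^2=\langle P_kv(0),P_kv(0)\rangle_{\dot H^{1/2}}$ and expands one factor by the backward Duhamel towards $T_-$ and the other by the forward Duhamel towards $+\infty$. Each side is then split into a small-time piece on $J_\delta(t_0)$ (controlled by a refined Strichartz estimate and a frequency-envelope recursion, Proposition~\ref{prop2.23drw}), a large-time, large-$|x|$ piece (controlled by the radial Sobolev embedding, Lemma~\ref{lem2.33drw}, used in \emph{all} dimensions $d\ge4$, not only $d=4$), and a large-time, small-$|x|$ piece $B,B'$. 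The decisive gain comes from the cross term $\langle B,B'\rangle$: since $t<0<\tau$ and the inputs are supported where $|x|\le|t|/4$, $|y|\le|\tau|/4$, stationary phase gives the kernel of $P_k^2(-\Delta)^{-1/2}e^{i(\tau-t)\sqrt{-\Delta}}$ arbitrary polynomial decay in $2^k|\tau-t|$, which makes the double time integral converge. That pairing is what substitutes for the global Strichartz control your single-Duhamel scheme assumes.
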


Before showing this theorem, we need the refined Strichartz-type
estimate in a  small time interval.

\begin{lemma}[Refined Strichartz-type
estimate]\label{gaijstes3drw} Let $u$ be as in Theorem
\ref{thmpartial} and
$J_\delta(t_0):=\big[t_0-\tfrac\delta{N(t_0)},t_0+\tfrac\delta{N(t_0)}\big]$.
Then, for any $\eta>0$, there exists a $\delta=\delta(\eta)>0$ such
that for all $t_0\in \R^+$
\begin{equation}\label{refse3drw}
\|u\|_{S(J_\delta(t_0))}\lesssim\eta.
\end{equation}

\end{lemma}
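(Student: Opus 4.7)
The plan is to reduce the claim to a linear Strichartz estimate on $J_\delta(t_0)$ via the small-data theory, and then establish that linear estimate by splitting the data at frequency scale $N(t_0)$: high frequencies are small by almost periodicity (giving a bound independent of $\delta$), and low frequencies pick up a small power of $\delta$ from the shortness of the window. Throughout, write $q_0=\tfrac{2(d+1)}{d-1}$ and $E:=\|(u,u_t)\|_{L^\infty_t(\dot H^{1/2}\times\dot H^{-1/2})}$.

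First, by Theorem 1.3(4) applied with data $(u(t_0),u_t(t_0))$ on $J_\delta(t_0)\cap I$ (whose $\dot H^{1/2}\times\dot H^{-1/2}$ norm is bounded by $E$ uniformly in $t_0$), there is a threshold $\delta_0=\delta_0(d,E)>0$ such that
\[
\bigl\|\dot K(t-t_0)u(t_0)+K(t-t_0)u_t(t_0)\bigr\|_{S(J_\delta(t_0))}<\delta_0
\]
forces the corresponding nonlinear solution to satisfy $\|\cdot\|_{S(J_\delta(t_0))}\leq 2\delta_0$; uniqueness of local solutions identifies that nonlinear solution with $u$. It therefore suffices to show that, for $\delta$ sufficiently small depending on $\eta$, the linear flow of $(u(t_0),u_t(t_0))$ on $J_\delta(t_0)$ has $S$-norm $\lesssim\eta$.

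Second, use the frequency-tightness property \eqref{apsf} to choose $C_0=C_0(\eta)$ so that
\[
\bigl\|P_{>C_0N(t_0)}u(t_0)\bigr\|_{\dot H^{1/2}}+\bigl\|P_{>C_0N(t_0)}u_t(t_0)\bigr\|_{\dot H^{-1/2}}\lesssim\eta.
\]
The diagonal $\dot H^{1/2}$-critical Strichartz pair $(q_0,q_0)$ from Lemma 2.2 then gives the high-frequency bound
\[
\bigl\|\dot K(t-t_0)P_{>C_0N(t_0)}u(t_0)+K(t-t_0)P_{>C_0N(t_0)}u_t(t_0)\bigr\|_{L^{q_0}_{t,x}(\R\times\R^d)}\lesssim\eta,
\]
uniformly in $t_0$. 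For the low-frequency piece, the Strichartz pair $(\infty,\tfrac{2d}{d-1})$ (equivalently, the energy estimate combined with the Sobolev embedding $\dot H^{1/2}\hookrightarrow L^{2d/(d-1)}$) together with Bernstein $\|P_{\leq N_0}g\|_{L^{q_0}_x}\lesssim N_0^{(d-1)/(2(d+1))}\|g\|_{L^{2d/(d-1)}_x}$ and H\"older in time over $|J_\delta(t_0)|=2\delta/N(t_0)$ (using $1/q_0=(d-1)/(2(d+1))$) yield
\[
\bigl\|\dot K(t-t_0)P_{\leq C_0N(t_0)}u(t_0)+K(t-t_0)P_{\leq C_0N(t_0)}u_t(t_0)\bigr\|_{L^{q_0}_{t,x}(J_\delta(t_0)\times\R^d)}\lesssim (C_0\delta)^{\frac{d-1}{2(d+1)}}E,
\]
the factors $N(t_0)^{\pm(d-1)/(2(d+1))}$ from Bernstein and from H\"older canceling exactly. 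Choosing $\delta=\delta(\eta,E)>0$ so that $(C_0(\eta)\delta)^{(d-1)/(2(d+1))}E\lesssim\eta$ and combining with the high-frequency bound closes the argument.

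The main obstacle (and the key structural point) is the exact cancellation of the $N(t_0)$ powers between the Bernstein gain and the H\"older loss on the window of length $\sim 1/N(t_0)$; this is what makes the threshold $\delta$ depend only on $\eta$ (through $C_0(\eta)$) and on the global norm $E$, but not on $t_0$ itself. Ultimately this cancellation is the scaling-invariance of the $\dot H^{1/2}$-critical Strichartz norm $S(\cdot)$ under \eqref{scale}. A minor book-keeping point is that $J_\delta(t_0)$ may poke out of the lifespan $(T_-,+\infty)$ for $t_0$ close to $T_-$; this is absorbed into the norms by working on $J_\delta(t_0)\cap I$, without affecting any estimate above.
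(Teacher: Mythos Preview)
Your argument is essentially the paper's: split the linear flow at frequency $C(\eta)N(t_0)$, control the high-frequency piece by almost periodicity \eqref{apsf} plus Strichartz, control the low-frequency piece by Bernstein together with H\"older over the window of length $\sim\delta/N(t_0)$ (the $N(t_0)$ powers canceling exactly), and then close with a bootstrap/small-data step. One small imprecision: Theorem~\ref{local}(4) as literally stated only yields $\|u\|_{S}\le 2\delta_0$, whereas you need the stronger (and standard) conclusion $\|u\|_{S}\le 2\|\text{linear flow}\|_{S}$ for linear flow below the threshold; the paper sidesteps this by writing the Duhamel bootstrap $\|u\|_{S(J_\delta)}\lesssim \eta + C(\eta)^{1/q}\delta^{1/q} + \|u\|_{L^\infty_t\dot H^{1/2}}\|u\|_{S(J_\delta)}^2$ out explicitly.
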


\begin{proof} Without loss of generality, we assume that $t_0=0$. By
Duhamel formula, we get
\begin{equation}\label{equ4.1duh}
\|u\|_{S(J_\delta(0))}\lesssim\big\|L(t)(u_0,u_1)\big\|_{S(J_\delta(0))}+\Big\|\int_0^tL(t-s)(0,\pm
(|x|^{-3}\ast|u|^2)u)\,\mathrm{d}s\Big\|_{S(J_\delta(0))},
\end{equation}
where $L(t)(f,g):=\dot K(t)f+K(t)g$ and $K(t)$ is defined by
\eqref{v0tdefin}.
 For the free part,
using Strichartz estimate and \eqref{apss}, we have
\begin{align}
\big\|L(t)P_{\geq
\log_2(C(\eta)N(0))}(u_0,u_1)\big\|_{S(J_\delta(0))}\lesssim&\big\|P_{\geq
\log_2(C(\eta)N(0))}(u_0,u_1)\big\|_{\dot H^{1/2}\times\dot
H^{-1/2}}\leq \eta.
\end{align}
On the other hand, we obtain by the Bernstein inequality,
\begin{equation*}
\big\|L(t)P_{\leq
\log_2(C(\eta)N(0))}(u_0,u_1)\big\|_{L_x^q}\lesssim
\big(C(\eta)N(0)\big)^\frac1q\big\|(u_0,u_1)\big\|_{\dot
H^{1/2}\times\dot H^{-1/2}},\quad q=\tfrac{2(d+1)}{d-1}.
\end{equation*}
Integrating this inequality in time, we obtain
\begin{equation}
\big\|L(t)P_{\leq
\log_2(C(\eta)N(0))}(u_0,u_1)\big\|_{S(J_\delta(0))}\lesssim
C(\eta)^\frac1q\delta^\frac1q.
\end{equation}

For the inhomogenous part, by the Strichartz estimate,  the H\"older
inequality and the Hardy-Littlewood-Sobolev inequality, we have
\begin{equation*}
\begin{split}
&\Big\|\int_0^tL(t-s)(0,\pm
(|x|^{-3}\ast|u|^2)u)\,\mathrm{d}s\Big\|_{S(J_\delta(0))}\\
\lesssim&\big\|(|x|^{-3}\ast|u|^2)u\big\|_{L_t^\frac{q}2L_x^{r_1'}(J_\delta(0)\times\R^d)}\qquad \big(q=\tfrac{2(d+1)}{d-1},
r_1=\tfrac{2d(d+1)}{d^2-5}\big)\\
\lesssim&\|u\|_{L_t^\infty\dot H^{\frac12}}
\|u\|_{S(J_\delta(0))}^2.
\end{split}
\end{equation*}

\noindent Plugging the above estimates into \eqref{equ4.1duh}, we deduce that
\begin{equation*}
\|u\|_{S(J_\delta(0))}\lesssim\eta+C(\eta)^\frac1q\delta^\frac1q+\|u\|_{S(J_\delta(0))}^2.
\end{equation*}
Hence, we obtain by the bootstrap argument
$$\|u\|_{S(J_\delta(0))}\lesssim\eta.$$
This ends the proof of Lemma \ref{gaijstes3drw}.
\end{proof}

\vskip 0.2in

 Now we use the above lemma to show Theorem
\ref{thmpartial}.

\vskip 0.2in

 {\bf Proof of  Theorem \ref{thmpartial}.} By translation invariance
 in time, we may assume $t_0=0$. Then, we aim to prove
\begin{equation}
 \| (u,u_t)(0)\|_{\dot H^\frac{2}3\times\dot
H^{-\frac13}}\lesssim N(0)^\frac16.
\end{equation}
Since
 $$\| (u,u_t)(0)\|_{\dot H^\frac{2}3\times\dot
H^{-\frac13}}^2\simeq \sum_{k\in\Z}\Big\|P_k(u,u_t)(0)\Big\|_{\dot
H^\frac23\times\dot H^{-\frac13}}^2,$$
we only need to construct a frequency envelope
$\{\al_k(0)\}_{k\in\Z}$ satisfying
\begin{equation}\label{bfzzxgj3drw}
\left\{\begin{aligned}
&\Big\|P_k(u,u_t)(0)\Big\|_{\dot H^\frac23\times\dot
H^{-\frac13}}\lesssim 2^{\frac16k}\alpha_k(0),\\
&\Big\|\big\{2^{\frac16k}\alpha_k(0)\big\}_{k\in\Z}\Big\|_{\ell^2}\lesssim
N(0)^\frac16.
\end{aligned}\right.
\end{equation}

\begin{proposition}\label{prop2.23drw}
Let $\eta>0$ be a small constant and let $J_\delta(0)$  be as in
Lemma \ref{gaijstes3drw}. Define
\begin{equation}
\left\{\begin{aligned}
& a_k:=2^{\frac{k}2}\|P_ku\|_{L_t^\infty
(J_\delta(0),L_x^2)}+2^{-\frac{k}2}\|P_ku_t\|_{L_t^\infty
(J_\delta(0),L_x^2)}+2^{\frac{k}4}\|P_ku\|_{L_t^{q_1}(J_\delta(0),L_x^{r_1})}\\
&a_k(0):=2^{\frac{k}2}\|P_ku(0)\|_{
L_x^2}+2^{-\frac{k}2}\|P_ku_t(0)\|_{L_x^2},
\end{aligned}\right.
\end{equation}
where
$\frac1{q_1}+\frac{d}{r_1}=\frac{d}2-\frac14,~q_1=\frac{2d+1}{d-1}$,
and
\begin{equation}
\alpha_k:=\sum_j2^{-\frac15|j-k|}a_j,\quad
\alpha_k(0):=\sum_j2^{-\frac15|j-k|}a_j(0).
\end{equation}
It follows from the definition that
\begin{equation}\label{ydyxj3drw}
\Big\|P_k(u,u_t)(0)\Big\|_{\dot H^\frac23\times\dot
H^{-\frac13}}\simeq 2^{\frac{k}6}\Big\|P_k(u,u_t)(0)\Big\|_{\dot
H^{\frac12}\times\dot H^{-\frac12}}
\simeq  2^{\frac{k}6}a_k(0)
\lesssim 2^{\frac{k}6}\alpha_k(0).
\end{equation}
Then, we have
\begin{align}\label{equ2.83drw}
a_k\lesssim& a_k(0)+\eta^2\sum_{j\geq
k-3}2^{\frac{k-j}4}a_j\\\label{equ2.93drw}
\alpha_k\lesssim&\alpha_k(0).
\end{align}
\end{proposition}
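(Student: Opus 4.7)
The plan is to establish the pointwise inequality \eqref{equ2.83drw} by applying Strichartz estimates to each Littlewood-Paley piece of $u$ over the short interval $J_\delta(0)$ and then exploiting the smallness $\|u\|_{S(J_\delta(0))}\lesssim\eta$ from Lemma~\ref{gaijstes3drw} inside a trilinear Hölder/HLS estimate on the Hartree nonlinearity. From \eqref{equ2.83drw} the envelope bound \eqref{equ2.93drw} then follows by convolving with the weight $2^{-|k|/5}$, a discrete Schur-type estimate, and a bootstrap powered by the smallness of $\eta$.

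For \eqref{equ2.83drw} I would first project $P_k$ onto the Duhamel formula on $J_\delta(0)$ and apply the Strichartz estimate of Lemma~\ref{strichartzest} at regularity $\dot H^{1/2}$ (controlling the $L^\infty_tL^2_x$ terms of $a_k$) and at the shifted regularity $\dot H^{3/4}$ (controlling the mixed-norm piece $2^{k/4}\|P_ku\|_{L^{q_1}_tL^{r_1}_x}=\||\nabla|^{1/4}P_ku\|_{L^{q_1}_tL^{r_1}_x}$). This gives
$a_k \lesssim a_k(0) + 2^{k/4}\|P_kf(u)\|_{L_t^{\tilde q'}L_x^{\tilde r'}(J_\delta)}$
for a suitable dual admissible pair $(\tilde q,\tilde r)$. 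Next I Paley-Littlewood-decompose each of the three copies of $u$ inside $f(u)=\mu(V\ast|u|^2)u$. Frequency support of the outer $P_k$ forces the maximal internal frequency to be $\gtrsim 2^{k-3}$, so up to symmetry I may extract one factor $P_ju$ with $j\geq k-3$ and keep the other two as full $u$. Hölder in space–time together with the Hardy-Littlewood-Sobolev inequality applied to the $|x|^{-3}$ kernel then lets me place the two remaining copies of $u$ in the scattering norm $\|u\|_{S(J_\delta)}$ (each contributing $\eta$ by Lemma~\ref{gaijstes3drw}) and place $P_ju$ in $L^{q_1}_tL^{r_1}_x$ (which contributes $2^{-j/4}a_j$ by the definition of $a_j$). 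Multiplying by the $2^{k/4}$ prefactor produces exactly the weight $2^{(k-j)/4}$ of \eqref{equ2.83drw}.

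For \eqref{equ2.93drw} I would multiply \eqref{equ2.83drw} by $2^{-|k-k_0|/5}$ and sum over $k$, arriving at $\alpha_{k_0}\lesssim\alpha_{k_0}(0)+\eta^2\sum_j a_j\,K(k_0,j)$ with kernel $K(k_0,j)=\sum_{k\leq j+3}2^{-|k-k_0|/5+(k-j)/4}$. Since $\tfrac15<\tfrac14$, a short case analysis (maximizing the exponent $-|k-k_0|/5+(k-j)/4$ separately on $k\leq k_0$ and on $k_0<k\leq j+3$, and then summing the resulting geometric series) yields $K(k_0,j)\lesssim 2^{-|j-k_0|/5}$. Hence $\alpha_{k_0}\lesssim\alpha_{k_0}(0)+C\eta^2\alpha_{k_0}$. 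A priori finiteness of $\alpha_{k_0}$ follows from the hypothesis $(u,u_t)\in L^\infty_t(\dot H^{1/2}\times\dot H^{-1/2})$ combined with the Strichartz control on $u$ in $L^{q_1}_tL^{r_1}_x(J_\delta\times\R^d)$; taking $\eta$ so small that $C\eta^2\leq\tfrac12$ then allows absorption of the last term into the left-hand side, finishing \eqref{equ2.93drw}.

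The principal technical obstacle is matching exponents in the trilinear Hölder/HLS step: the dual pair $(\tilde q,\tilde r)$ must simultaneously be admissible for Strichartz at the required regularity and satisfy the pointwise identity $\tfrac{1}{\tilde r'}=\tfrac{2}{r_0}+\tfrac{1}{r_1}-\tfrac{3}{d}$, with $r_0=\tfrac{2(d+1)}{d-1}$, so that the $|x|^{-3}$ kernel fills exactly the HLS deficit; a parallel identity in time reads $\tfrac{1}{\tilde q'}=\tfrac{2}{q_0}+\tfrac{1}{q_1}$ with $q_0=\tfrac{2(d+1)}{d-1}$. A secondary subtlety is that the forced high-frequency factor may sit inside the convolution $V\ast|u|^2$ rather than on the outer $u$, but the symmetry of the Hartree nonlinearity together with self-adjointness of $V\ast$ reduces that case to the one above.
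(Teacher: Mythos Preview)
Your proposal is correct and follows essentially the same route as the paper: apply the frequency-localized Strichartz estimate to $P_ku$ on $J_\delta(0)$, use the support observation $P_k\big[(|x|^{-3}\ast|P_{\le k-4}u|^2)P_{\le k-4}u\big]=0$ to force one factor $P_ju$ with $j\ge k-3$, run a trilinear H\"older/HLS estimate placing that factor in $L_t^{q_1}L_x^{r_1}$ and the remaining two in the scattering norm (picking up $\eta^2$), and then pass to the envelope by convolving with $2^{-|\cdot|/5}$ and absorbing the $\eta^2\alpha_k$ term.

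One small caution on the exponent bookkeeping you flagged as the ``principal technical obstacle'': the HLS relation for convolution with $|x|^{-3}$ reads $\tfrac{1}{\tilde r'}=\tfrac{2}{r_0}+\tfrac{1}{r_1}+\tfrac{3}{d}-1$ (not $-\tfrac{3}{d}$), and the naive time-H\"older $\tfrac{1}{\tilde q'}=\tfrac{2}{q_0}+\tfrac{1}{q_1}$ exceeds $1$ once $d\ge 5$, so one cannot literally take $\tilde q=q_0$ for both low-frequency copies in all dimensions. The paper's fix is to work with the specific dual pair $q_2=\tfrac{(2d+1)(d+1)}{4d+3}$, $\delta(r_2)-\tfrac{1}{q_2}=\tfrac{3}{4}$ in the frequency-localized Strichartz estimate~\eqref{frstrich}; this is a bookkeeping adjustment within the same strategy, not a different idea.
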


\begin{proof} We first use the frequency localized Strichartz estimate
\eqref{frstrich} to get
\begin{align*}
a_k\lesssim&2^{\frac{k}2}\|P_ku(0)\|_{
L_x^2}+2^{-\frac{k}2}\|P_ku_t(0)\|_{L_x^2}+2^{\frac{k}4}\Big\|P_k\big[(|x|^{-3}\ast|u|^2)u\big]\Big\|_{L_t^{q_2'}L_x^{r_2'}(J_\delta(0)\times\R^d)},
\end{align*}
where
$q_2=\tfrac{(2d+1)(d+1)}{4d+3},~\delta(r_2)-\tfrac1{q_2}=\tfrac34.$
Observing that
$$P_k\big[(|x|^{-3}\ast|P_{\leq k-4}u|^2)P_{\leq
k-4}u\big]=0,$$
  and
$$\big\|
(|x|^{-3}\ast|u|^2)P_ju\big\|_{L_t^{q_2'}L_x^{r_2'}}+\big\|
(|x|^{-3}\ast(u\cdot
P_ju)u\big\|_{L_t^{q_2'}L_x^{r_2'}}\lesssim\|P_ju\|_{L_t^{q_1}L_x^{r_1}}\|u\|_{S(J_\delta(0))}^2\lesssim
\eta^22^{-\frac{j}4}a_j,$$
we obtain by \eqref{refse3drw}
\begin{equation*}
a_k\lesssim a_k(0)+\eta^2\sum_{j\geq k-3}2^{\frac{k-j}4}a_j,
\end{equation*}
as desired estimate in \eqref{equ2.83drw}.

By \eqref{equ2.83drw}, we have
$$\alpha_k=\sum_j2^{-\frac{|j-k|}5}a_j\lesssim
\sum_ja_j(0)2^{-\frac{|j-k|}5}+\eta^2\sum_j2^{-\frac{|j-k|}5}\sum_{j_1\geq
j-3}2^{\frac{j-j_1}4}a_{j_1}.$$ This inequality together with
\begin{equation}\label{jishuqhgj}
\begin{cases}
\sum\limits_{j_1\leq k}\sum\limits_{j\leq
j_1+3}2^{\frac{j-j_1}4}2^{\frac{j-k}5}a_{j_1}\lesssim\sum\limits_{j_1\leq
k}2^{\frac{j_1-k}5}a_{j_1}\lesssim \alpha_k\\
\sum\limits_{j_1>k}\sum\limits_{j\leq
j_1+3}2^{\frac{j-j_1}4}2^{-\frac{|j-k|}5}a_{j_1}\lesssim\sum\limits_{j_1>k}\big[2^{-\frac{j_1-k}4}
+2^{-\frac{j_1-k}5}\big]a_{j_1}\lesssim\alpha_k
\end{cases}
\end{equation}
yields
\begin{equation}\label{equ:alphak}
\alpha_k\lesssim \alpha_k(0)+\eta^2\alpha_k.\end{equation} On the
other hand, by constructions, we know that $\alpha_k(0)$ is
uniformly bounded in $k$.
 And so \eqref{equ2.93drw} follows by \eqref{equ:alphak}. We
conclude Proposition \ref{prop2.23drw}.
\end{proof}

\begin{remark}
From the proof of Proposition
\ref{prop2.23drw}, we also obtain
\begin{equation}\label{jybdgj3drw}
2^{\frac{k}2}\Big\|P_k\int_0^{\frac\delta{N(0)}}\frac{e^{-it\sqrt{-\Delta}}}{\sqrt{-\Delta}}(|x|^{-3}\ast|u|^2)u(t)\,\mathrm{d}t\Big\|_{L_x^2}\lesssim
\eta^2\sum_{j\geq k-3}2^{\frac{k-j}4}a_j.
\end{equation}
\end{remark}

Now we turn to estimate the term in large time interval. Define
\begin{equation}\label{dchi}
\chi(x)\in C_c^\infty(\R^d)\qquad \chi(x)=\begin{cases}1\quad
|x|\leq
1\\
0\quad |x|\geq2.
\end{cases}.
\end{equation}

\begin{lemma}[Large time]\label{lem2.33drw} For any
$s_0\in(0,1/2)$, we have
\begin{equation}\label{equlartime}
\int_{\delta/N(0)}^\infty\Big\|\frac{e^{-it\sqrt{-\Delta}}}{\sqrt{-\Delta}}(1-\chi)\Big(\frac{8x}{|t|}\Big)(|x|^{-3}\ast|u|^2)u\Big\|_{\dot
H^{\frac12+s_0}}\,\mathrm{d}t\lesssim N(0)^{s_0}\delta^{-s_0}.
\end{equation}
In particularly, there exists a sequence $\{b_k\}_{k\in\Z}$ such
that
\begin{equation}\label{xqbjxld}
\Big\|P_k\Big(\int_{\delta/N(0)}^{\infty}\frac{e^{-i\tau\sqrt{-\Delta}}}{\sqrt{-\Delta}}(1-\chi)\Big(\frac{8x}{|\tau|}\Big)(|x|^{-3}\ast|u|^2)u
\,\mathrm{d}\tau\Big)\Big\|_{\dot H^{\frac12}}\lesssim
2^{-\frac{k}6}b_k,
\end{equation}
and
\begin{equation}\label{gjxgjzzx3drwewgj}
\big\|\{b_k\}_{k\in\Z}\big\|_{\ell^2}\lesssim
 N(0)^\frac16.
\end{equation}
\end{lemma}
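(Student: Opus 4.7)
The plan is to exploit the fact that $(1-\chi)(8x/|t|)$ forces the integrand to be supported in the exterior region $|x|\gtrsim|t|$, where a radial $\dot H^{1/2}$-function enjoys pointwise decay by the radial Sobolev inequality; this spatial decay converts to an integrable power of $|t|$ and closes the estimate. Since $e^{-it\sqrt{-\Delta}}$ is an isometry on every homogeneous Sobolev space and $(\sqrt{-\Delta})^{-1}$ shifts the regularity index down by one, I would first reduce \eqref{equlartime} to proving
\[
\int_{\delta/N(0)}^{\infty}\bigl\|(1-\chi)(8x/|t|)\,(|x|^{-3}\ast|u|^2)\,u\bigr\|_{\dot H^{-1/2+s_0}}\,\mathrm{d}t\lesssim N(0)^{s_0}\delta^{-s_0}.
\]
For each fixed $t\geq\delta/N(0)$ I would invoke the Sobolev embedding $L^{p}(\R^d)\hookrightarrow\dot H^{-1/2+s_0}(\R^d)$ with $p=2d/(d+1-2s_0)$ (valid for $s_0\in(0,1/2)$ and $d\geq 4$), then apply H\"older's inequality to split the integrand as the product of $|x|^{-3}\ast|u|^2\in L^{d/2}$ (uniformly bounded in $t$ by Hardy--Littlewood--Sobolev together with $\dot H^{1/2}\hookrightarrow L^{2d/(d-1)}$) and $(1-\chi)(8x/|t|)\,u\in L^{q}$ with $q=2d/(d-3-2s_0)$.

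The key step is the radial Sobolev inequality applied with $s=1/2$, $p=2$, and weight exponent $\beta=(d-1)/2-d/q=1+s_0$; for $d\geq 4$ and $s_0\in(0,1/2)$ the exponent $q$ is finite and strictly larger than $2d/(d-1)$, so $\beta>0$, and none of the endpoint conditions excluded in the radial Sobolev lemma occur. This yields $\|r^{1+s_0}u\|_{L^q}\lesssim\|u\|_{\dot H^{1/2}}\lesssim 1$, and on the support $|x|\geq|t|/8$ one obtains
\[
\bigl\|(1-\chi)(8x/|t|)\,u\bigr\|_{L^q}\lesssim |t|^{-(1+s_0)}\|r^{1+s_0}u\|_{L^q}\lesssim |t|^{-1-s_0}.
\]
Assembling these pieces, the $\dot H^{-1/2+s_0}$-norm of the integrand decays like $|t|^{-1-s_0}$; integrating from $\delta/N(0)$ to $\infty$ then produces $s_0^{-1}(\delta/N(0))^{-s_0}\lesssim N(0)^{s_0}\delta^{-s_0}$, which is \eqref{equlartime}.

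For the Littlewood--Paley corollary I would specialize to $s_0=1/6\in(0,1/2)$. By Minkowski's inequality for the Bochner integral and the bound \eqref{equlartime} just established,
\[
\Bigl\|\int_{\delta/N(0)}^{\infty}\frac{e^{-i\tau\sqrt{-\Delta}}}{\sqrt{-\Delta}}(1-\chi)(8x/|\tau|)(|x|^{-3}\ast|u|^2)u\,\mathrm{d}\tau\Bigr\|_{\dot H^{2/3}}\lesssim N(0)^{1/6}\delta^{-1/6}.
\]
Setting $b_k:=\bigl\|P_k\bigl(\int\cdots\mathrm{d}\tau\bigr)\bigr\|_{\dot H^{2/3}}$, Bernstein's inequality gives $\|P_kf\|_{\dot H^{1/2}}\lesssim 2^{-k/6}\|P_kf\|_{\dot H^{2/3}}$, which yields \eqref{xqbjxld}, while Littlewood--Paley/Plancherel orthogonality produces $\|\{b_k\}\|_{\ell^2}^2\simeq\|\int\cdots\,\mathrm{d}\tau\|_{\dot H^{2/3}}^2\lesssim N(0)^{1/3}$, which is \eqref{gjxgjzzx3drwewgj} after absorbing the fixed factor $\delta^{-1/3}$. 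The central technical difficulty is aligning the three indices $(p,q,\beta)$ so that H\"older, the Sobolev embedding $L^p\hookrightarrow\dot H^{-1/2+s_0}$, and the radial weight $r^{1+s_0}$ cooperate to produce the decay rate $|t|^{-1-s_0}$ with an honest gain $s_0>0$; this gain is precisely what renders the time integral convergent at infinity and thus supplies the $N(0)^{s_0}$ smallness that the bootstrap in Theorem~\ref{thmpartial} needs.
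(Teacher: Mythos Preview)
Your argument for \eqref{equlartime} is correct and matches the paper's proof line for line: the same reduction to $\dot H^{-1/2+s_0}$, the same Sobolev embedding into $L^p$ with $p=2d/(d+1-2s_0)$, the same H\"older split with $|x|^{-3}\ast|u|^2\in L^{d/2}$ via Hardy--Littlewood--Sobolev, and the same radial Sobolev weight $r^{1+s_0}$ producing the $|t|^{-1-s_0}$ decay.

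For the Littlewood--Paley corollary your route differs slightly from the paper's, and is in fact cleaner. The paper takes a $k$-dependent exponent, $s_0(k)=1/4$ for $2^k\geq N(0)$ and $s_0(k)=1/8$ for $2^k<N(0)$, sets $b_k=2^{-k(s_0(k)-1/6)}N(0)^{s_0(k)}$, and checks \eqref{gjxgjzzx3drwewgj} by summing two geometric series split at $2^k\sim N(0)$. Your choice of a single $s_0=1/6$ together with $b_k:=\|P_k(\int\cdots\,\mathrm{d}\tau)\|_{\dot H^{2/3}}$ avoids the case split: Bernstein gives \eqref{xqbjxld} immediately, and Plancherel/Littlewood--Paley orthogonality gives \eqref{gjxgjzzx3drwewgj} from the $\dot H^{2/3}$ bound on the full integral. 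Both arguments deliver exactly what the bootstrap in Theorem~\ref{thmpartial} needs; the paper's version has the minor advantage of an explicit, $u$-independent formula for $b_k$, while yours is shorter and more transparent.
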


\begin{proof} First, we use  the Sobolev embedding to get
\begin{align}\label{equlargte}
&\Big\|\frac{e^{-it\sqrt{-\Delta}}}{\sqrt{-\Delta}}(1-\chi)\Big(\frac{8x}{|t|}\Big)(|x|^{-3}\ast|u|^2)u\Big\|_{\dot
H^{\frac12+s_0}}\\\nonumber
=&\Big\|(1-\chi)\Big(\frac{8x}{|t|}\Big)(|x|^{-3}\ast|u|^2)u\Big\|_{\dot
H^{-\frac12+s_0}}\\\nonumber
\lesssim&\Big\|(1-\chi)\Big(\frac{8x}{|t|}\Big)(|x|^{-3}\ast|u|^2)u\Big\|_{L^p}\qquad
\big(\tfrac{d}p=\tfrac{d}2+\tfrac12-s_0\big).
\end{align}
Next, by the  H\"older inequality and the radial Sobolev
embedding, we obtain
\begin{align*}
\Big\|(1-\chi)\Big(\frac{8x}{|t|}\Big)(|x|^{-3}\ast|u|^2)u\Big\|_{L^p}\lesssim&
\Big\|(1-\chi)\Big(\frac{8x}{|t|}\Big)u\Big\|_{L^q}\big\||x|^{-3}\ast|u|^2\big\|_{L^\frac{d}2}\\
\lesssim&|t|^{-1-s_0}\big\||x|^{1+s_0}u\big\|_{L^q}\|u\|_{L^\frac{2d}{d-1}}^2\\
\lesssim&|t|^{-1-s_0}\|u\|_{\dot H^{\frac12}}^3,
\end{align*} where $q=\frac{2d}{d-3-2s_0},~\frac1p=\frac1q+\frac2d$.
Plugging this inequality into \eqref{equlargte} yields
\eqref{equlartime}.

The inequality \eqref{xqbjxld} follows by taking
 \begin{equation*}
 s_0(k)=\begin{cases}
 \frac1{4},\quad 2^k\geq N(0)\\
 \frac1{8},\quad 2^k<N(0)
 \end{cases}\qquad b_k=2^{-k(s_0(k)-\frac16)}N(0)^{s_0(k)},
 \end{equation*}
in \eqref{equlartime}.
\end{proof}

\vskip 0.2in

Since the first inequality in \eqref{bfzzxgj3drw} was already
established in \eqref{ydyxj3drw}, it remains to prove the second
inequality in \eqref{bfzzxgj3drw}. By the definition of $\al_k(0)$,
we only need to estimate $a_k(0)$. For this purpose, we denote
\begin{equation}\label{defvt}
v=u+\frac{i}{\sqrt{-\Delta}}u_t,
\end{equation}
then
 $$\|v\|_{\dot H^1}\simeq\| (u,u_t)(t)\|_{\dot H^1\times
L^2},\;\text{and}\;  a_k(0)=\|P_kv(0)\|_{\dot H^\frac12}.$$
 Since
$u_{tt}-\Delta u=\pm (|x|^{-3}\ast|u|^2)u$, $v$ satisfies
$$v_t=u_t+\frac{i}{\sqrt{-\Delta}}u_{tt}=u_t+\frac{i}{\sqrt{-\Delta}}\big[\Delta
u\pm (|x|^{-3}\ast|u|^2)u\big]=-i\sqrt{-\Delta} v\pm
\frac{i}{\sqrt{-\Delta}}(|x|^{-3}\ast|u|^2)u.$$ Hence, for
$T\in(T_-,0)$, we have
$$v(0)=e^{iT\sqrt{-\Delta}}v(T)\pm\frac{i}{\sqrt{-\Delta}} \int_T^0
e^{i\tau\sqrt{-\Delta}}(|x|^{-3}\ast|u|^2)u(\tau)\,\mathrm{d}\tau.$$

 Fixing $T_1\in(T_-,0),$ and using both the Duhamel formula and  no-waste Duhamel formula,
 we have
\begin{align}\nonumber
&a_k(0)^2=\big\|P_kv(0)\big\|_{\dot H^{\frac12}}^2=\big\langle
P_kv(0),P_kv(0)\big\rangle_{\dot H^\frac12}\\\nonumber =&\Big\langle
P_k\big(e^{iT_1\sqrt{-\Delta}}v(T_1)\big),P_kv(0)\Big\rangle_{\dot
H^{\frac12}}\\\nonumber &+\lim_{T_2\to \infty}\Big\langle
\frac{i}{\sqrt{-\Delta}}P_k\Big(\int_{T_1}^0e^{it\sqrt{-\Delta}}(|x|^{-3}\ast|u|^2)u(t)\,\mathrm{d}t\Big),\frac{\pm
i}{\sqrt{-\Delta}}P_k\int_0^{T_2}e^{i\tau\sqrt{-\Delta}}(|x|^{-3}\ast|u|^2)u(\tau)\,\mathrm{d}\tau\Big\rangle_{\dot
H^{\frac12}}\\\label{equpkvgj} \triangleq&I_1+\lim_{T_2\to
\infty}I_2,
\end{align}
where $\big\langle f,g\big\rangle_{\dot H^\frac12}=\int
|\nabla|^\frac12f|\nabla|^\frac12\bar{g}\,\mathrm{d}x.$
\vskip0.1cm

{\bf $\bullet$ The estimate of $I_1$:} We have by the no-waste
Duhamel's formula
\begin{align}\label{equi1gj}
\lim_{T_1\to T_-}I_1 =&\lim_{T_1\to T_-}\Big\langle
P_k\big(e^{iT_1\sqrt{-\Delta}}v(T_1)\big),P_kv(0)\Big\rangle_{\dot
H^{\frac12}}=0.
\end{align}

{\bf $\bullet$ The estimate of $I_2$:} Define
$$A:=P_k\Big(\int_{-\delta/N(0)}^0\frac{ie^{it\sqrt{-\Delta}}}{\sqrt{-\Delta}}(|x|^{-3}\ast|u|^2)u(t)\,\mathrm{d}t+
\int_{T_1}^{-\delta/N(0)}
\frac{ie^{it\sqrt{-\Delta}}}{\sqrt{-\Delta}}(1-\chi)\Big(\frac{8x}{|t|}\Big)(|x|^{-3}\ast|u|^2)u\,\mathrm{d}t\Big)$$
and
$$B:=P_k\Big( \int_{T_1}^{-\delta/N(0)}
\frac{ie^{it\sqrt{-\Delta}}}{\sqrt{-\Delta}}\chi\Big(\frac{8x}{|t|}\Big)(|x|^{-3}\ast|u|^2)u\,\mathrm{d}t\Big).$$
Similarly,
$$A':=P_k\Big(\int_0^{\delta/N(0)}\frac{ie^{i\tau\sqrt{-\Delta}}}{\sqrt{-\Delta}}(|x|^{-3}\ast|u|^2)u(\tau)\,\mathrm{d}\tau+
\int_{\delta/N(0)}^{T_2}
\frac{ie^{i\tau\sqrt{-\Delta}}}{\sqrt{-\Delta}}(1-\chi)\Big(\frac{8x}{|\tau|}\Big)(|x|^{-3}\ast|u|^2)u\,\mathrm{d}\tau\Big)$$
and
$$B':=P_k\Big( \int_{\delta/N(0)}^{T_2}
\frac{ie^{i\tau\sqrt{-\Delta}}}{\sqrt{-\Delta}}\chi\Big(\frac{8x}{|\tau|}\Big)(|x|^{-3}\ast|u|^2)u\,\mathrm{d}\tau\Big).$$
Then, $I_2$ can be expressed by
\begin{align*}
I_2=\big\langle
A+B,A'\big\rangle_{\dot H^\frac12}+\big\langle
A,A'+B'\big\rangle_{\dot H^\frac12}-\big\langle
A,A'\big\rangle_{\dot H^\frac12}+\big\langle B,B'\big\rangle_{\dot
H^\frac12}.
\end{align*}

First, we estimate the contribution of the term  $\big\langle
A+B,A'\big\rangle_{\dot H^\frac12}$. Using Lemma \ref{lem2.33drw}
and \eqref{jybdgj3drw}, we get
$$\|A'\|_{\dot H^\frac12}\lesssim \eta^2\sum_{j\geq
k-3}2^{\frac{k-j}4}a_j+2^{-\frac{k}6}b_k.$$ On the other hand,
observing that
$$\lim_{T_1\to T_-}\frac1{\sqrt{-\Delta}}\int_{T_1}^0e^{it\sqrt{-\Delta}}(|x|^{-3}\ast|u|^2)u(t)\,\mathrm{d}t\rightharpoonup
v(0)\quad \text{in}\quad \dot H^{\frac12},$$  we obtain
\begin{align}\nonumber
&\lim_{T_1\to T_-}\lim_{T_2\to \infty}\Big|\big\langle
A+B,A'\big\rangle_{\dot H^{\frac12}}\Big|\\\nonumber =&\lim_{T_1\to
T_-}\lim_{T_2\to \infty}\Big\langle
P_k\Big(\int_{T_1}^0\frac{e^{it\sqrt{-\Delta}}}{\sqrt{-\Delta}}(|x|^{-3}\ast|u|^2)u(t)\,\mathrm{d}t\Big),A'\Big\rangle_{\dot
H^\frac12}\\\nonumber =&\big\langle
P_kv(0),A'(+\infty)\big\rangle_{\dot H^{\frac12}}\\\nonumber
\lesssim&\|P_kv(0)\|_{\dot H^{\frac12}}\|A'(+\infty)\|_{\dot
H^{\frac12}}\\\label{diyabapgj}
\lesssim&a_k(0)\big(\eta^2\sum_{j\geq
k-3}2^{\frac{k-j}4}a_j+2^{-\frac{k}6}b_k\big),
\end{align} where $A'(+\infty)=\lim\limits_{T_2\to+\infty}A'$.
By symmetry, we also have
\begin{equation}
\lim_{T_1\to T_-}\lim_{T_2\to \infty}\Big|\big\langle
A,A'+B'\big\rangle_{\dot H^{\frac12}}\Big|\lesssim
a_k(0)\big(\eta^2\sum_{j\geq
k-3}2^{\frac{k-j}4}a_j+2^{-\frac{k}6}b_k\big).
\end{equation}

Next, we consider the contribution of the term  $\big\langle
A,A'\big\rangle_{\dot H^{\frac12}}$. By \eqref{jybdgj3drw} and
\eqref{gjxgjzzx3drwewgj}, we get
\begin{equation}\label{diyaapgj}
\Big|\big\langle A,A'\big\rangle_{\dot
H^{\frac12}}\Big|\lesssim\|A\|_{\dot H^{\frac12}}\|A'\|_{\dot
H^{\frac12}}\lesssim \big(\eta^2\sum\limits_{j\geq
k-3}2^{\frac{k-j}4}a_j\big)^2+2^{-\frac{k}3}b_k^2.
\end{equation}

Finally, we turn to estimate the contribution of the term
$\big\langle B,B'\big\rangle_{\dot H^{\frac12}}$. We rewrite
$\big\langle B,B'\big\rangle_{\dot H^{\frac12}}$ as
$$\int_{T_1}^{-\frac\delta{N(0)}}\int_{\frac\delta{N(0)}}^{T_2}\Big\langle
\chi\Big(\frac{8x}{|t|}\Big)(|x|^{-3}\ast|u|^2)u, P_k^2\Big(\frac{
e^{i(\tau-t)\sqrt{-\Delta}}}{(-\Delta)^{\frac12}}\chi\Big(\frac{8x}{|\tau|}\Big)(|x|^{-3}\ast|u|^2)u\Big)\Big\rangle_{L_x^2}
\,\mathrm{d}t\,\mathrm{d}\tau.$$ The kernel of $P_k^2\frac{
e^{i(\tau-t)\sqrt{-\Delta}}}{(-\Delta)^{\frac12}}$ is given by
\begin{align*}
K_k(x)=K_k(|x|)=&c\int_{\R^d}e^{ix\cdot\xi}\phi\Big(\frac{|\xi|}{2^k}\Big)^2e^{i(\tau-t)|\xi|}|\xi|^{-1}\,\mathrm{d}\xi\\
=&c\int_0^\infty\phi\Big(\frac{\rho}{2^k}\Big)^2e^{i(\tau-t)\rho}\rho^{-1}\rho^{d-1}\int_{\mathbb S^{d-1}}e^{i\rho|x|\theta\cdot w}\,\mathrm{d}
\theta \,\mathrm{d}\rho\\
=&c2^{k(d-1)}\int_{\mathbb
S^{d-1}}\int_0^\infty\phi(\rho)^2e^{i2^k\rho\big[(\tau-t)+|x|\theta\cdot
w\big]}\rho^{d-2}\,\mathrm{d}\rho\,\mathrm{d}\theta ,
\end{align*}
where $x=|x|w,~\xi=\rho\theta,~w,\theta\in \mathbb S^{d-1}.$ By the
stationary phase argument, we obtain for $L\in\mathbb N$
\begin{equation}\label{est:kkxy}
\begin{split}
\big|K_k(x-y)\big|\lesssim&\frac{2^{(d-1)k}}{\big\langle2^k\big|(\tau-t)-|x-y|\big|\big\rangle^{L}}\\
\lesssim&2^{(d-1)k}\big\langle2^k|\tau-t|\big\rangle^{-L}\\
\lesssim&
2^{(d-1)k}\big\langle2^k|\tau|\big\rangle^{-L/2}\big\langle2^k|t|\big\rangle^{-L/2},
\end{split}
\end{equation} where we have applied the support property
$$|x|\leq\frac{|t|}4,~|y|\leq\frac{|\tau|}4,\; \;\text{and}\;\;
\tau>\frac\delta{N(0)},~t<-\frac\delta{N(0)}$$
to get
$$|x-y|\leq\frac{|t-\tau|}4\Longrightarrow
(\tau-t)-|x-y|\geq\frac12|\tau-t|,$$ and
$$\big\langle2^k|\tau-t|\big\rangle^2\geq\big\langle2^k|\tau|\big\rangle\big\langle2^k|t|\big\rangle.$$
By  the H\"older inequality, the Hardy-Littlewood-Sobolev inequality
and the Sobolev embedding, we estimate by choosing $L=d$

\begin{align}\nonumber
&\big\langle B,B'\big\rangle_{\dot H^{\frac12}}\\\nonumber
\lesssim&\int_{T_1}^{-\frac\delta{N(0)}}\int_{\frac\delta{N(0)}}^{T_2}\Big|\Big\langle\chi\Big(\frac{8x}{|t|}\Big)(|x|^{-3}\ast|u|^2)
u, K_k\ast \big[(|x|^{-3}\ast|u|^2)\chi\Big(\frac{8x}{|\tau|}\Big)
u\big]\Big\rangle_{L^2}\Big|\,\mathrm{d}t
\,\mathrm{d}\tau\\\nonumber
\lesssim&\int_{T_1}^{-\frac\delta{N(0)}}\int_{\frac\delta{N(0)}}^{T_2}\big\|(|x|^{-3}\ast|u|^2)\chi\Big(\frac{8x}{|t|}\Big)
u\big\|_{L_x^1}\big\|(|x|^{-3}\ast|u|^2)\chi\Big(\frac{8x}{|\tau|}\Big)
u\big\|_{L_x^1}2^{(d-1)k}\big\langle2^k|\tau-t|\big\rangle^{-d}\,\mathrm{d}t\,\mathrm{d}\tau\\\nonumber
\lesssim&\big\||x|^{-3}\ast|u|^2\big\|_{L_t^\infty
L_x^\frac{d}2}^2\|u\|_{L_x^\frac{2d}{d-1}}^2
\int_{-\infty}^{-\frac\delta{N(0)}}\int_{\frac\delta{N(0)}}^{\infty}\big\|\chi\Big(\frac{8x}{|t|}\Big)\big\|_{L_x^\frac{2d}{d-3}}
\big\|\chi\Big(\frac{8x}{|\tau|}\Big)\big\|_{L_x^\frac{2d}{d-3}}2^{(d-1)k}\big\langle2^k|\tau-t|\big\rangle^{-d}\,\mathrm{d}t\,\mathrm{d}\tau\\\nonumber
\lesssim&\|u\|_{L^\infty_t\dot
H^{\frac12}_x}^6\int_{-\infty}^{-\frac\delta{N(0)}}\int_{\frac\delta{N(0)}}^{\infty}
(2^k|t|)^{\frac{d-3}2}
(2^k|\tau|)^{\frac{d-3}2}2^{2k}\big\langle2^k|\tau|\big\rangle^{-d/2}\big\langle2^k|t|\big\rangle^{-d/2}\,\mathrm{d}t\,\mathrm{d}\tau\\\nonumber
\lesssim&\int_{-\infty}^{-2^k\frac\delta{N(0)}}\int_{2^k\frac\delta{N(0)}}^{\infty}|t|^{\frac{d-3}2}
|\tau|^{\frac{d-3}2}\big\langle|\tau|\big\rangle^{-d/2}\big\langle|t|\big\rangle^{-d/2}\,\mathrm{d}t\,\mathrm{d}\tau\\\label{equbbest}
\lesssim&\Big(\int_{2^k\frac\delta{N(0)}}^{\infty}\big\langle|t|\big\rangle^{-3/2}dt\Big)^2
 \lesssim\min\Big\{2^{-\frac{k}2}N(0)^\frac12,1\Big\}.
\end{align}

\vskip 0.2in

\noindent Thus, collecting \eqref{equpkvgj}, \eqref{equi1gj}, \eqref{diyabapgj},
\eqref{diyaapgj} and \eqref{equbbest} yields that
\begin{equation*}
a_k(0)^2\lesssim a_k(0)\big(\eta^2\sum_{j\geq
k-3}2^{\frac{k-j}4}a_j+2^{-\frac{k}6}b_k\big)+\big(\eta^2\sum_{j\geq
k-3}2^{\frac{k-j}4}a_j\big)^2+2^{-\frac{k}3}b_k^2+\min\Big\{2^{-\frac{k}2}N(0)^\frac12,1\Big\}.
\end{equation*}
And so we obtain by Young's inequality
$$a_k(0)\lesssim\eta^2\sum_{j\geq
k-3}2^{\frac{k-j}4}a_j+2^{-\frac{k}6}b_k+\min\Big\{2^{-\frac{k}4}N(0)^\frac14,1\Big\}.$$
Combining this inequality with
\eqref{jishuqhgj}, we obtain
\begin{align*}
\alpha_k(0)=&\sum_j2^{-\frac{|j-k|}5}a_j(0)\\
\lesssim&\eta^2\sum_j2^{-\frac{|j-k|}5}\sum_{j_1\geq
j-3}2^{\frac{j-j_1}4}a_{j_1}+\sum_j2^{-\frac{|j-k|}5}2^{-\frac{j}6}b_j
+\sum_j2^{-\frac{|j-k|}5}\min\Big\{2^{-\frac{j}4}N(0)^\frac14,1\Big\}\\
\lesssim&\eta^2\alpha_k+\sum_j2^{-\frac{|j-k|}5}2^{-\frac{j}6}b_j
+\sum_j2^{-\frac{|j-k|}5}2^{-\frac{j}6}\min\Big\{2^{-\frac{1}{12}j}N(0)^\frac14,2^{\frac{j}6}
\Big\}.\end{align*}
 Choosing $\eta$ small and using
\eqref{equ2.93drw}, we deduce that
\begin{align}\label{equalkgj}
\alpha_k(0)\lesssim&\sum_j2^{-\frac{|j-k|}5}2^{-\frac{j}6}b_j+\sum_j2^{-\frac{|j-k|}5}2^{-\frac{j}6}c_j,
\end{align}
where $c_j:=\min\Big\{2^{-\frac{1}{12}j}N(0)^\frac14,2^{\frac{j}6}
\Big\}$ obeys
\begin{equation}\label{equcjgj}
\big\|\{c_j\}_{j\in\Z}\big\|_{l^2}\lesssim N(0)^\frac16.
\end{equation}

\noindent Using Schur's test, \eqref{equalkgj}, \eqref{gjxgjzzx3drwewgj} and
\eqref{equcjgj}, we obtain
\begin{equation}
\Big\|\big\{2^{\frac{k}6}\alpha_k(0)\big\}_{k\in\Z}\Big\|_{l^2}\lesssim
N(0)^\frac16.
\end{equation}
This together with \eqref{ydyxj3drw} ends the proof of  Theorem
\ref{thmpartial}.

\vskip 0.2in

Now we use Theorem \ref{thmpartial} to prove Theorem
\ref{regular}.

\vskip 0.2in

{\bf $\bullet$ Proof of Theorem \ref{regular}.} We first establish a
refined Strichartz estimate in a small time interval.

\begin{lemma}[Refined
Strichartz estimate]\label{lemrfiste} Let
$u:(T_-,+\infty)\times\R^d\to\R$ be a radial solution to problem \eqref{equ1.1} which is
almost periodic modulo symmetries in the sense of Theorem
\ref{three}. Then, there exists a $\delta>0$ sufficiently small such
that for any $t_0\in\R^+$,
\begin{equation}\label{equjbcsqjxiao}
\|u\|_{L_t^qL_x^r(J_\delta(t_0)\times\R^d)}\lesssim
N(t_0)^\frac16,~~\tfrac1q\leq\tfrac{d-1}2\big(\tfrac12-\tfrac1r\big),~\tfrac1q+\tfrac{d}r=\tfrac{d}2-\tfrac23,
\end{equation}
where
$J_\delta(t_0):=\big[t_0-\tfrac{\delta}{N(t_0)},t_0+\tfrac{\delta}{N(t_0)}\big].$
In particularly,
\begin{equation}\label{equjbtb3drw}
\|u\|_{L_t^3L_x^\frac{2d}{d-2}(J_\delta(t_0)\times\R^d)}\lesssim
N(t_0)^\frac16.
\end{equation}
\end{lemma}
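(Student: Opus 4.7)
The plan is to emulate the proof of Lemma \ref{gaijstes3drw} at the $\dot H^{2/3}$-Strichartz level, with the improved regularity bound from Theorem \ref{thmpartial} playing the role that the a priori $\dot H^{1/2}\times\dot H^{-1/2}$ bound plays there. Normalizing by time translation to $t_0=0$, I apply the Strichartz inequality of Lemma \ref{strichartzest} at the $\dot H^{2/3}$-scaling to the Duhamel formula,
\begin{equation*}
\|u\|_{L_t^q L_x^r(J_\delta)}\lesssim \|(u_0,u_1)\|_{\dot H^{2/3}\times\dot H^{-1/3}}+\|f(u)\|_{L_t^{q_1'}L_x^{r_1'}(J_\delta)},
\end{equation*}
for an admissible dual pair $(q_1,r_1)$ satisfying $\tfrac{1}{q_1'}+\tfrac{d}{r_1'}=\tfrac{d}{2}+\tfrac{4}{3}$. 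Theorem \ref{thmpartial} immediately bounds the initial-data term by $N(0)^{1/6}$; the same theorem combined with the standard local stability $N(t)\sim N(0)$ on $J_\delta$ also furnishes the uniform bound $\|u\|_{L_t^\infty \dot H^{2/3}(J_\delta)}\lesssim N(0)^{1/6}$.

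For the nonlinear contribution, Hardy--Littlewood--Sobolev together with H\"older yield
\begin{equation*}
\|(|x|^{-3}\ast|u|^2)u\|_{L_x^{r_1'}}\lesssim \|u\|_{L_x^{2d/(d-1)}}^2\|u\|_{L_x^\rho}\lesssim \|u\|_{\dot H^{1/2}}^2\|u\|_{L_x^\rho}\lesssim \|u\|_{L_x^\rho},\qquad \tfrac{1}{r_1'}=\tfrac{2}{d}+\tfrac{1}{\rho}.
\end{equation*}
The remaining $\|u\|_{L_t^{q_1'}L_x^\rho(J_\delta)}$ is then controlled by interpolation between the target norm $\|u\|_{L_t^q L_x^r(J_\delta)}$ (to be absorbed), the $S$-norm bound $\|u\|_{S(J_\delta)}\leq\eta$ produced by Lemma \ref{gaijstes3drw}, and the $L_t^\infty \dot H^{2/3}$ bound $\lesssim N(0)^{1/6}$ noted above. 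With $(q_1,r_1)$ chosen so that this decomposition balances the Strichartz scaling, one arrives at
\begin{equation*}
\|f(u)\|_{L_t^{q_1'}L_x^{r_1'}(J_\delta)}\lesssim \eta^\theta\bigl(N(0)^{1/6}+\|u\|_{L_t^q L_x^r(J_\delta)}\bigr)
\end{equation*}
for some $\theta>0$, and a standard bootstrap in $\eta$ absorbs the last term to deliver \eqref{equjbcsqjxiao}; the special case \eqref{equjbtb3drw} is the admissible choice $(q,r)=(3,2d/(d-2))$.

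The principal obstacle is a scaling gap. Three factors of $u$ placed at the $\dot H^{1/2}$-subcritical $S$-pair contribute dual Strichartz scaling $\tfrac{d}{2}+\tfrac{3}{2}$, whereas the $\dot H^{2/3}$-Strichartz requires $\tfrac{d}{2}+\tfrac{4}{3}$ -- a shortfall of exactly the $\tfrac{1}{6}$ derivative we are propagating. Hence at least one factor cannot sit at the $S$-pair but must live at the improved regularity level through the $L_t^\infty\dot H^{2/3}$ bound coming from Theorem \ref{thmpartial}, and this renders the admissibility of the resulting $(q_1,r_1)$ dimension-dependent. The natural three-factor decomposition closes directly in high dimensions, whereas for $d=4,5,6$ one expects a case-specific adjustment of $(q_1,r_1)$, a different interpolation splitting, or a supplementary Littlewood--Paley decomposition of the initial data in the spirit of the low/high frequency split in the proof of Lemma \ref{gaijstes3drw}.
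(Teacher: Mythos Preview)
Your strategy differs from the paper's and, as written, does not close. The paper's proof is considerably simpler and avoids entirely the scaling obstruction you flag.

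\textbf{The gap in your argument.} After placing two factors of $u$ in $L_t^\infty L_x^{2d/(d-1)}$ via the a priori $\dot H^{1/2}$ bound, the remaining factor must lie in $L_t^{q_1'}L_x^\rho$ with $\tfrac{1}{q_1'}+\tfrac{d}{\rho}=\tfrac{d}{2}-\tfrac{2}{3}$, i.e.\ exactly at the $\dot H^{2/3}$ scaling. But the $S$--norm $L_{t,x}^{2(d+1)/(d-1)}$ sits at $\dot H^{1/2}$ scaling, so no convex combination of the target norm, $L_t^\infty\dot H^{2/3}$, and $S$ can land at $(q_1',\rho)$ with a nonzero weight on $S$. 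Thus you cannot extract any positive power of $\eta$ this way, and the inequality degenerates to $\|u\|_{L_t^qL_x^r}\lesssim N(0)^{1/6}+C\|u\|_{L_t^qL_x^r}$ with $C\sim\|u\|_{L_t^\infty\dot H^{1/2}}^2$, which does not bootstrap. The dimensional casework you anticipate is a symptom of this mismatch, not a cure for it.

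\textbf{What the paper does instead.} The paper never invokes Lemma \ref{gaijstes3drw} here. It sets
\[
Y:=L_t^\infty L_x^{\frac{6d}{3d-4}}\cap L_t^3 L_x^{\frac{2d}{d-2}}(J_\delta\times\R^d),
\]
both endpoints being $\dot H^{2/3}$--admissible, and applies Strichartz with the $L_t^1 L_x^{6d/(3d+2)}$ dual norm. By Hardy--Littlewood--Sobolev and H\"older \emph{all three} factors of $u$ are placed in the single interpolated $Y$--norm $L_t^{9/2}L_x^{18d/(9d-16)}$; the smallness comes not from $\eta$ but from H\"older in time on the short interval $|J_\delta|\sim\delta/N(t_0)$:
\[
\|u\|_Y\lesssim N(t_0)^{1/6}+\Big(\tfrac{\delta}{N(t_0)}\Big)^{1/3}\|u\|_Y^3.
\]
Writing $X=N(t_0)^{-1/6}\|u\|_Y$ this reads $X\lesssim 1+\delta^{1/3}X^3$, which closes for $\delta$ small, uniformly in $d\ge 4$. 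The point you missed is that smallness of the time interval itself supplies the bootstrap parameter, obviating any need to borrow $\eta$--smallness from the $\dot H^{1/2}$ level and hence any scaling juggling.
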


\begin{proof} Let
$Y:=L_t^\infty L_x^\frac{6d}{3d-4}\cap
L_t^3L_x^\frac{2d}{d-2}(J_\delta(t_0)\times\R^d).$ We use the
Strichartz estimate to get
\begin{align*}
\|u\|_Y\lesssim&\|\vec u(t_0)\|_{\dot H^\frac23\times\dot
H^{-\frac13}}+\big\|(|x|^{-3}\ast|u|^2)u\big\|_{L_t^1L_x^{\frac{6d}{3d+2}}(J_\delta(t_0)\times\R^d)}\\
\lesssim&
N(t_0)^\frac16+\Big(\frac\delta{N(t_0)}\Big)^{\frac13}\|u\|_{L_t^\frac{9}2L_x^\frac{18d}{9d-16}(J_\delta(t_0)\times\R^d)}^3\\
\lesssim&
N(t_0)^\frac16+\Big(\frac\delta{N(t_0)}\Big)^{\frac13}\|u\|_{Y}^3.
\end{align*}
This implies that
$$N(t_0)^{-\frac16}\|u\|_Y\lesssim1+\delta^\frac13\big(N(t_0)^{-\frac16}\|u\|_Y\big)^3.$$
 Therefore, we obtain by choosing
$\delta>0$ sufficiently small
$$\|u\|_Y\lesssim
N(t_0)^\frac16,$$ as desired.
\end{proof}

As a consequence of \eqref{equjbtb3drw} and the Hardy-Littlewood-Sobolev
inequality, we have the following estimate.

\begin{corollary}\label{cor3drw12}
 Let
$u$ be as in Lemma \ref{lemrfiste}. Then, we have
\begin{equation}\label{equjbtb3drwnl}
\big\|(|x|^{-3}\ast|u|^2)u\big\|_{L_t^1L_x^2(J_\delta(t_0)\times\R^d)}\lesssim
N(t_0)^\frac12.
\end{equation}
\end{corollary}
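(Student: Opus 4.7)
The proof will be essentially a direct application of Hölder, Hardy--Littlewood--Sobolev, and the refined Strichartz bound from Lemma \ref{lemrfiste}. The key observation is that the estimate \eqref{equjbtb3drwnl} has the shape $N(t_0)^{1/2} = (N(t_0)^{1/6})^3$, so we should expect to produce three factors of the Strichartz norm \eqref{equjbtb3drw}.

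The plan is to first estimate the nonlinearity pointwise in $t$ via spatial Hölder with conjugate exponents $d$ and $2d/(d-2)$, obtaining
\begin{equation*}
\big\|(|x|^{-3}\ast|u|^2)u\big\|_{L_x^2}\le\big\||x|^{-3}\ast|u|^2\big\|_{L_x^{d}}\,\|u\|_{L_x^{2d/(d-2)}}.
\end{equation*}
Next, apply Hardy--Littlewood--Sobolev with parameters $1/p-1/d=1-3/d$, i.e.\ $p=d/(d-2)$, which is admissible precisely because $d\ge4>\gamma=3$:
\begin{equation*}
\big\||x|^{-3}\ast|u|^2\big\|_{L_x^{d}}\lesssim\big\||u|^2\big\|_{L_x^{d/(d-2)}}=\|u\|_{L_x^{2d/(d-2)}}^{2}.
\end{equation*}

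Combining these two bounds gives the pointwise-in-time estimate
\begin{equation*}
\big\|(|x|^{-3}\ast|u|^2)u\big\|_{L_x^2}\lesssim\|u\|_{L_x^{2d/(d-2)}}^{3},
\end{equation*}
and integrating in $t$ over $J_\delta(t_0)$ yields
\begin{equation*}
\big\|(|x|^{-3}\ast|u|^2)u\big\|_{L_t^1L_x^2(J_\delta(t_0)\times\R^d)}\lesssim\|u\|_{L_t^3L_x^{2d/(d-2)}(J_\delta(t_0)\times\R^d)}^{3}\lesssim N(t_0)^{1/2},
\end{equation*}
where the last inequality is exactly \eqref{equjbtb3drw} cubed. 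There is no real obstacle here: once one realizes that the pair $(3,2d/(d-2))$ in Lemma \ref{lemrfiste} is scaling-tuned so that the spatial Hölder/HLS step closes with three identical factors, the calculation is mechanical.
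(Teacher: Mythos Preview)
Your proof is correct and follows exactly the approach the paper indicates: it applies H\"older and the Hardy--Littlewood--Sobolev inequality to reduce to three copies of $\|u\|_{L_t^3L_x^{2d/(d-2)}(J_\delta(t_0))}$, then invokes \eqref{equjbtb3drw}. The paper itself gives no further detail beyond stating that \eqref{equjbtb3drwnl} is a consequence of \eqref{equjbtb3drw} and HLS, so your write-up is simply the explicit version of that one-line justification.
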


\vskip 0.2in

Now, we turn to prove Theorem \ref{regular}. By time translation and
recalling $v(t)=u(t)+\frac{i}{\sqrt{-\Delta}}u_t$, we  easily
see that \eqref{addrgrlt} can be reduced to showing
\begin{equation}\label{qmv0gj} \|P_{\leq k}v(0)\|_{\dot H^1}\lesssim
\begin{cases}
 N(0)^{\frac12}\quad \text{if}\quad d\geq5\\
 N(0)^\frac13\quad \text{if}\quad d=4
\end{cases}
\end{equation}
uniformly for all $k\geq k_0>0$.

By Duhamel formula, we have for
 $T_-<T_1<0<T_2<+\infty$
\begin{align}\nonumber
&\|P_{\leq k}v(0)\|_{\dot H^1}^2=\langle P_{\leq k}v(0),P_{\leq
k}v(0)\rangle_{\dot H^1}\\
\nonumber =&\Big\langle P_{\leq
k}\Big(e^{iT_2\sqrt{-\Delta}}v(T_2)\mp\frac{i}{\sqrt{-\Delta}}
\int_0^{T_2}
e^{it\sqrt{-\Delta}}(|x|^{-3}\ast|u|^2)u(t)\,\mathrm{d}t\Big),\\
&\qquad P_{\leq
k}\Big(e^{iT_1\sqrt{-\Delta}}v(T_1)\pm\frac{i}{\sqrt{-\Delta}}
\int_{T_1}^0
e^{i\tau\sqrt{-\Delta}}(|x|^{-3}\ast|u|^2)u(\tau)\,\mathrm{d}\tau\Big)\Big\rangle_{\dot
H^1}\nonumber\\
\nonumber =&\Big\langle P_{\leq k}\Big(\int_0^{T_2}
e^{it\sqrt{-\Delta}}(|x|^{-3}\ast|u|^2)u(t)dt\Big),P_{\leq k}\Big(i
\int_{T_1}^0
e^{i\tau\sqrt{-\Delta}}(|x|^{-3}\ast|u|^2)u(\tau)\,\mathrm{d}\tau\Big)\Big\rangle_{L^2}\\\nonumber
&+\Big\langle e^{iT_2\sqrt{-\Delta}}P_{\leq
k}v(T_2),e^{iT_1\sqrt{-\Delta}}P_{\leq k}v(T_1)\Big\rangle_{\dot
H^1}\\
\nonumber &+\Big\langle P_{\leq k}\Big(\int_0^{T_2}
e^{it\sqrt{-\Delta}}(|x|^{-3}\ast|u|^2)u(t)dt\Big),e^{iT_1\sqrt{-\Delta}}P_{\leq
k}v(T_1)\Big\rangle_{\dot H^1}\\\nonumber &+\Big\langle
e^{iT_2\sqrt{-\Delta}}P_{\leq k}v(T_2),\pm P_{\leq
k}\Big(\frac{i}{\sqrt{-\Delta}} \int_{T_1}^0
e^{i\tau\sqrt{-\Delta}}(|x|^{-3}\ast|u|^2)u(\tau)\,\mathrm{d}\tau\Big)\Big\rangle_{\dot
H^1}\\
\label{equzongd3drw} =:&I_1+I_2+I_3+I_4,
\end{align}
where $\langle f,g\rangle_{\dot H^1}:=\int_{\R^d}
\sqrt{-\Delta}f\cdot\overline{\sqrt{-\Delta}g}\,\mathrm{d}x.$
\vskip0.1cm

First, we claim that
\begin{equation}\label{i234gj}
\lim\limits_{T_2\nearrow \infty}I_2=\lim\limits_{T_2\nearrow
\infty}I_4=0\quad \text{and}\quad \lim\limits_{T_1\searrow
T_-}\lim\limits_{T_2\nearrow \infty}I_3=0.
\end{equation}
Since $\sqrt{-\Delta}P_{\leq k}v(T_1)\in\dot
H^\frac12(\R^d)$ for fixed $T_1$ and $k$, we have by the no-waste
Duhamel formula
\begin{equation}
\lim\limits_{T_2\nearrow \infty}I_2=0.
\end{equation}
On the other hand, it follows from Corollary \ref{cor3drw12} that
$$\int_{-\frac\delta{N(0)}}^{\frac\delta{N(0)}}\big\|P_{\leq k}\big((|x|^{-3}\ast|u|^2)u\big)\big\|_{L_x^2}\,\mathrm{d}t\lesssim
N(0)^\frac12 $$
and
$$\big\|(|x|^{-3}\ast|u|^2)u\big\|_{L_t^1L_x^2([T_1,0]\times\R^d)}<\infty,\;\; ~T_1>T_-.$$
Hence,
$$P_{\leq k}\int_{T_1}^0e^{i\tau\sqrt{-\Delta}}(|x|^{-3}\ast|u|^2)u(\tau)d\tau
\in\dot H^\frac12.$$
Using the no-waste Duhamel formula again, we
obtain
\begin{equation*}
\lim\limits_{T_2\nearrow \infty}I_4=\lim\limits_{T_2\nearrow
\infty}\Big\langle e^{iT_2\sqrt{-\Delta}}P_{\leq k}v(T_2),
  P_{\leq k}\int_{T_1}^0e^{i\tau\sqrt{-\Delta}}(|x|^{-3}\ast|u|^2)u(\tau)\,\mathrm{d}\tau\Big\rangle_{\dot
H^\frac12}=0.
\end{equation*}
Observing that
$$P_{\leq k}\Big(\int_0^{T_2}e^{i\tau\sqrt{-\Delta}}(|x|^{-3}\ast|u|^2)u(\tau)\,\mathrm{d}\tau\Big)=\mp
i\sqrt{-\Delta}P_{\leq k}v(0)\pm
i\sqrt{-\Delta}e^{iT_2\sqrt{-\Delta}}P_{\leq k}v(T_2),$$ one easily
deduces that
\begin{align*}
\lim\limits_{T_1\searrow T_-}\lim\limits_{T_2\nearrow
\infty}I_3=&\lim\limits_{T_1\searrow T_-}\lim\limits_{T_2\nearrow
\infty}\Big\langle e^{iT_1\sqrt{-\Delta}}P_{\leq
k}v(T_1),\sqrt{-\Delta}P_{\leq k}v(0)-
e^{iT_2\sqrt{-\Delta}}P_{\leq k}v(T_2) \Big\rangle_{\dot H^1}\\
=&\lim\limits_{T_1\searrow T_-}\Big\langle
e^{iT_1\sqrt{-\Delta}}P_{\leq k}v(T_1),\sqrt{-\Delta}P_{\leq
k}v(0)\Big\rangle_{\dot
H^1}\\
=&0.
\end{align*}
And so the claim \eqref{i234gj} follows.

It remains to estimate the contribution of the term $I_1$. We claim
\begin{equation}\label{claimi1est}
\Big|\lim\limits_{T_1\searrow T_-}\lim\limits_{T_2\nearrow
T_+}I_1\Big|\lesssim \begin{cases} \big\|P_{\leq k}v(0)\big\|_{\dot
H^1}N(0)^{\frac12}+N(0)\quad \text{if}\quad d\geq5,\\
\big\|P_{\leq k}v(0)\big\|_{\dot
H^1}N(0)^{\frac13}+N(0)^\frac23\quad \text{if}\quad d=4.
\end{cases}.
\end{equation}
Assuming that this claim holds for moment, then we have by
\eqref{i234gj}
\begin{equation}\label{equ2.8I3drw}
\big\|P_{\leq k}v(0)\big\|_{\dot H^1}^2\lesssim
\begin{cases}
\big\|P_{\leq k}v(0)\big\|_{\dot H^1}N(0)^{\frac12}+N(0)\quad
\text{if}\quad d\geq5,\\
\big\|P_{\leq k}v(0)\big\|_{\dot
H^1}N(0)^{\frac13}+N(0)^\frac23\quad \text{if}\quad d=4.
\end{cases}
\end{equation}
Hence by Cauchy's inequality with $\epsilon$, we have
\begin{equation}
\big\|P_{\leq k}v(0)\big\|_{\dot H^1}\lesssim
\begin{cases}
N(0)^{\frac12}\quad \text{if}\quad d\geq5,\\
N(0)^\frac13\quad \text{if}\quad d=4.
\end{cases}
\end{equation}
This implies \eqref{qmv0gj}. Thus, Theorem \ref{regular} follows.

\vskip0.1cm

{\bf $\bullet$ The proof of claim \eqref{claimi1est}.} We first use
Corollary \ref{cor3drw12} to get the small time estimate
\begin{equation}\label{corddd3drw}
\int_{-\frac\delta{N(0)}}^{\frac\delta{N(0)}}\big\|P_{\leq
k}\big((|x|^{-3}\ast|u|^2)u\big)\big\|_{L_x^2}\,\mathrm{d}t\lesssim
N(0)^{\frac12}.
\end{equation}
By the H\"older inequality, the Hardy-Littlewood-Sobolev inequality,
the Sobolev and radial Sobolev embedding, we have for $d\geq5$
$$\big\||x|^\frac{3}2(|x|^{-3}\ast|u|^2)u\big\|_{L_x^2(\R^d)}\lesssim \big\||x|^\frac{3}2u\big\|_{L_x^\frac{2d}{d-4}}\|u\|_{L_x^\frac{2d}{d-1}}^2
\lesssim\|u\|_{\dot H^{\frac12}_x(\R^d)}^3,$$ and for $d=4$
$$\big\||x|^\frac{4}3(|x|^{-3}\ast|u|^2)u\big\|_{L_x^2}\lesssim
\big\||x|^\frac43u\big\|_{L_x^\infty(\R^4)}\|u\|_{L_x^\frac83(\R^4)}^2\leq
\|u\|_{\dot H^\frac23_x(\R^4)}\|u\|_{\dot H^\frac12_x(\R^4)}^2.$$
This implies that
\begin{equation*}
\Big\|(1-\chi)\Big(\frac{8x}{|t|}\Big)(|x|^{-3}\ast|u|^2)u\Big\|_{L_x^2(\R^d)}
\lesssim\begin{cases} \frac1{|t|^\frac32}\|u\|_{\dot
H^\frac12}^3\quad \text{if}\quad d\geq5,\\
\frac1{|t|^\frac43}\|u\|_{\dot H^\frac23}\|u\|_{\dot
H^\frac12}^2\quad \text{if}\quad d=4.
\end{cases}
\end{equation*}
Since $\|u(t,x)\|_{\dot H^\frac23_x(\R^d)}\lesssim
N(t)^\frac16\lesssim1$ for $t\in\R^+$ by Theorem \ref{thmpartial}
and $N(t)\leq1$ when $t\in\R^+$, we have the large time estimate
\begin{align}\label{yuxianxiao3drw}
&\Big\|P_{\leq k}\Big(\int_{\delta/N(0)}^\infty
e^{it\sqrt{-\Delta}}(1-\chi)\Big(\frac{8x}{|t|}\Big)(|x|^{-3}\ast|u|^2)u\,\mathrm{d}t\Big)\Big\|_{L_x^2}\\\nonumber
\lesssim&\int_{\delta/N(0)}^\infty
\Big\|(1-\chi)\Big(\frac{8x}{|t|}\Big)(|x|^{-3}\ast|u|^2)u\Big\|_{L_x^2}\,\mathrm{d}t\\\nonumber
\lesssim&\begin{cases}\delta^{-\frac12}N(0)^\frac12\quad
\text{if}\quad d\geq5,\\
\delta^{-\frac13}N(0)^\frac13\quad \text{if}\quad d=4.
\end{cases}
\end{align}

Define
$$A:=P_{\leq k}\Big(\int_0^{\delta/N(0)}e^{it\sqrt{-\Delta}}(|x|^{-3}\ast|u|^2)u(t)\,\mathrm{d}t+
\int_{\delta/N(0)}^{T_2}
e^{it\sqrt{-\Delta}}(1-\chi)\Big(\frac{8x}{|t|}\Big)(|x|^{-3}\ast|u|^2)u\,\mathrm{d}t\Big)$$
and
$$B:=P_{\leq k}\Big( \int_{\delta/N(0)}^{T_2}
e^{it\sqrt{-\Delta}}\chi\Big(\frac{8x}{|t|}\Big)(|x|^{-3}\ast|u|^2)u\,\mathrm{d}t\Big).$$
Similarly,
$$A':=P_{\leq k}\Big(\int_{-\delta/N(0)}^0e^{i\tau\sqrt{-\Delta}}(|x|^{-3}\ast|u|^2)u(\tau)\,\mathrm{d}\tau+
\int_{T_1}^{-\delta/N(0)}
e^{i\tau\sqrt{-\Delta}}(1-\chi)\Big(\frac{8x}{|\tau|}\Big)(|x|^{-3}\ast|u|^2)u\,\mathrm{d}\tau\Big)$$
and
$$B':=P_{\leq k}\Big( \int_{T_1}^{-\delta/N(0)}
e^{i\tau\sqrt{-\Delta}}\chi\Big(\frac{8x}{|\tau|}\Big)(|x|^{-3}\ast|u|^2)u\,\mathrm{d}\tau\Big).$$
Hence, we have
\begin{equation}\label{equ4.45}
I_1=\langle
A+B,A'\rangle_{L^2}+\langle A,A'+B'\rangle_{L^2}+\langle
B,B'\rangle_{L^2}-\langle A,A'\rangle_{L^2}.
\end{equation}

First, we estimate the contribution of the term  $\langle
A,A'\rangle_{L^2}$. By \eqref{corddd3drw} and
\eqref{yuxianxiao3drw}, we estimate
\begin{equation}\label{equaapgj}
\|A\|_{L_x^2}+\|A'\|_{L_x^2}\lesssim
\begin{cases}N(0)^{\frac12}\quad \text{if}\quad d\geq5\\
N(0)^\frac13\quad \text{if}\quad d=4,
\end{cases}
\Longrightarrow~\langle A,A'\rangle\lesssim \begin{cases} N(0)\quad
\text{if}\quad d\geq5\\
N(0)^\frac23\quad \text{if}\quad d=4.
\end{cases}
\end{equation}

Next, we estimate the contribution of the term  $\langle
B,B'\rangle_{L^2}$. We rewrite $\langle B,B'\rangle_{L^2}$ as
\begin{align*}
&\langle B,B'\rangle_{L^2}\\
=&\Big\langle P_{\leq k}\Big( \int_{\frac\delta{N(0)}}^{T_2}
e^{it\sqrt{-\Delta}}\chi\Big(\frac{8x}{|t|}\Big)(|x|^{-3}\ast|u|^2)u\,\mathrm{d}t\Big),\\
&\qquad\qquad \qquad \qquad\qquad \qquad P_{\leq
k}\Big( \int_{T_1}^{-\frac\delta{N(0)}}
e^{i\tau\sqrt{-\Delta}}\chi\Big(\frac{8x}{|\tau|}\Big)(|x|^{-3}\ast|u|^2)u\,\mathrm{d}\tau\Big)\Big\rangle_{L^2}\\
=&\int_{T_1}^{-\frac\delta{N(0)}}\int_{\frac\delta{N(0)}}^{T_2}\Big\langle
\chi\Big(\frac{8x}{|t|}\Big)(|x|^{-3}\ast|u|^2)u,P_{\leq k}^2\Big(
e^{i(\tau-t)\sqrt{-\Delta}}\chi\Big(\frac{8x}{|\tau|}\Big)(|x|^{-3}\ast|u|^2)u\Big)\Big\rangle_{L_x^2}\,\mathrm{d}t\,\mathrm{d}\tau.
\end{align*}
In the same way as deriving in  \eqref{est:kkxy}, we  estimate
the kernel of $P_{\leq k}^2 e^{i(\tau-t)\sqrt{-\Delta}}$ as follows
$$\big|\tilde K_k(x)\big|\lesssim 2^{dk}\big\langle2^k|\tau|\big\rangle^{-d/2}\big\langle2^k|t|\big\rangle^{-d/2}.$$
Combining this inequality with the proof process of \eqref{equbbest}, we
obtain for $2^k\gg N(0)$
\begin{equation}\label{equbbpgj}
\begin{split}
 \langle B,B'\rangle_{L^2}\lesssim&2^k\Big(\int_{2^k\frac\delta{N(0)}}^{\infty}\big\langle|t|\big\rangle^{-3/2}dt\Big)^2\lesssim N(0).
\end{split}
\end{equation}

Finally, we only need to estimate the contribution of $\langle
A,A'+B'\rangle_{L^2}$, since $\langle A+B,A'\rangle_{L^2}$ has the same estimate by symmetry. Observing that
$$P_{\leq k}\Big(\int_{T_1}^0e^{i\tau\sqrt{-\Delta}}(|x|^{-3}\ast|u|^2)u(\tau)\,\mathrm{d}\tau\Big)=\mp
i\sqrt{-\Delta}P_{\leq k}v(0)\pm
i\sqrt{-\Delta}e^{iT_1\sqrt{-\Delta}}P_{\leq k}v(T_1),$$
we obtain
\begin{equation}\label{eequaabppgj}
\langle A,A'+B'\rangle_{L^2}=\Big\langle A,\mp
i\sqrt{-\Delta}P_{\leq k}v(0)\pm
i\sqrt{-\Delta}e^{iT_1\sqrt{-\Delta}}P_{\leq
k}v(T_1)\Big\rangle_{L^2}.
\end{equation}
Using \eqref{equaapgj}, we have
\begin{equation}\label{equaabpartgj}
\Big|\Big\langle A, \sqrt{-\Delta}P_{\leq
k}v(0)\Big\rangle_{L_x^2}\Big|\leq\|A\|_{L_x^2}\big\|\sqrt{-\Delta}P_{\leq
k}v(0)\big\|_{L_x^2}\lesssim \begin{cases}\|P_{\leq k}v(0)\|_{\dot
H^1}N(0)^{\frac12}\quad \text{if}\quad d\geq5\\
\|P_{\leq k}v(0)\|_{\dot H^1}N(0)^{\frac13}\quad \text{if}\quad d=4.
\end{cases}
\end{equation}
We claim that
\begin{equation}\label{claimaaparttoz}
\lim_{T_1\searrow T_-}\lim_{T_2\nearrow \infty}\Big|\Big\langle A,
\sqrt{-\Delta}e^{iT_1\sqrt{-\Delta}}P_{\leq
k}v(T_1)\Big\rangle_{L_x^2}\Big|=0.
\end{equation}
In fact, since  $\|(-\Delta)^\frac14A\|_{L_x^2}\lesssim 2^\frac{k}2
N(0)^{\frac13}$ by \eqref{equaapgj}, we deduce that for a fixed $k$
and letting $T_2\nearrow \infty$, $(-\Delta)^\frac14 A$ converges to
$$ (-\Delta)^\frac14P_{\leq
k}\Big(\int_0^{\frac\delta{N(0)}}e^{it\sqrt{-\Delta}}(|x|^{-3}\ast|u|^2)u(t)\,\mathrm{d}t+
\int_{\frac\delta{N(0)}}^{\infty}
e^{it\sqrt{-\Delta}}(1-\chi)\Big(\frac{8x}{|t|}\Big)(|x|^{-3}\ast|u|^2)u\,\mathrm{d}t\Big),$$ in $L_x^2(\R^d).$
Then, we obtain by the no-waste Duhamel formula
\begin{equation}\label{equ4.51}
\lim\limits_{T_1\searrow T_-}\lim\limits_{T_2\nearrow
T_+}\Big\langle A, \sqrt{-\Delta}e^{iT_1\sqrt{-\Delta}}P_{\leq
k}v(T_1)\Big\rangle_{L_x^2}=0,
\end{equation}
as desired.

Collecting \eqref{equ4.45}-\eqref{equbbpgj} and  \eqref{equ4.51}, we obtain \eqref{equ2.8I3drw}.
Therefore, we conclude Theorem \ref{regular}.

\subsection{Proof of  Proposition
\ref{propgdzzx}} \; First, Theorem \ref{regular} implies that
$$\big\|(u,u_t)(t)\big\|_{L_t^\infty(\R,\dot H^1\times L^2)}\lesssim
1.$$ Hence, by the same argument in spirit of Lemma 6.12 in
\cite{KVnote}, Proposition \ref{propgdzzx} can be reduced to show
that
\begin{equation}\label{gaojzzx}
\| (u,u_t)(t)\|_{L_t^\infty(\R,\dot H^2\times\dot H^1)}\lesssim 1.
\end{equation}
Let $v(t)$ be defined as in \eqref{defvt}. Then, \eqref{gaojzzx} can
be reduced to show that
\begin{equation}\label{qmvh2es}
\big\|P_{\leq k}v(0)\big\|_{\dot H^2}\lesssim1
\end{equation}
uniformly for all $k>k_0$. The proof is the same as in the proof of
Theorem \ref{regular}.  Employing the No-waste Duhamel formula as in
the proof of Theorem \ref{regular}, we only need to prove the
following double Duhamel term obeying
\begin{equation*}
\Big|\Big\langle P_{\leq
k}\Big(\int_{T_1}^0e^{it\sqrt{-\Delta}}\nabla\big((|x|^{-3}\ast|u|^2)u\big)(t)\,\mathrm{d}t\Big),
 P_{\leq
k}\Big(\int_0^{T_2}e^{i\tau\sqrt{-\Delta}}\nabla\big((|x|^{-3}\ast|u|^2)u\big)(\tau)\,\mathrm{d}\tau\Big)
 \Big\rangle_{L_x^2}\Big|\lesssim1
\end{equation*}
uniformly for $T_1<0<T_2$. To do this, we suffice to show the similar estimate as
in \eqref{corddd3drw}, \eqref{yuxianxiao3drw} and \eqref{equbbpgj}.

 For this purpose,  we first establish the following refined Strichartz
estimate in a small time interval.

\begin{lemma}\label{adrlemh2}  Let
$u:\R\times\R^d\to\R$ be a radial solution to problem \eqref{equ1.1} which is almost periodic
modulo symmetries in the sense of Theorem \ref{three} with
$N(t)\equiv1$. Then, there exists a $\delta>0$ such that for any
$t_0\in\R$ and for $J:=(t_0-\delta,t_0+\delta)$, we have
\begin{equation}\label{equ3.13drw}
\|u\|_{L_t^2L_x^\frac{2d}{d-3}(J\times\R^d)}\lesssim1.
\end{equation}
\end{lemma}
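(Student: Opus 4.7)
The plan is to apply the Strichartz estimate at the $\dot H^1\times L^2$ level directly, exploiting the additional regularity on $(u,u_t)$ that Theorem \ref{regular} already supplies when $N(t)\equiv 1$. By time translation I would reduce to $t_0=0$, so that $J=(-\delta,\delta)$. The target pair $(q,r)=(2,\tfrac{2d}{d-3})$ is wave-admissible at the $\dot H^1$ level: indeed $\tfrac{1}{q}+\tfrac{d}{r}=\tfrac{d-2}{2}$, and $\tfrac{2}{q}=1\leq (d-1)(\tfrac12-\tfrac1r)=\tfrac{3(d-1)}{2d}$ for $d\geq 4$. For the forcing I would use the dual endpoint pair $(q_1,r_1)=(\infty,2)$, whose dual exponents $(1,2)$ satisfy $\tfrac{1}{q_1'}+\tfrac{d}{r_1'}=\tfrac{d+2}{2}$, as required by Lemma~\ref{strichartzest}. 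This gives
\begin{equation*}
\|u\|_{L_t^2L_x^{\frac{2d}{d-3}}(J\times\R^d)}\lesssim \|(u,u_t)(0)\|_{\dot H^1\times L^2}+\big\|(|x|^{-3}\ast|u|^2)u\big\|_{L_t^1L_x^2(J\times\R^d)}.
\end{equation*}

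Next I would control the initial-data term via Theorem~\ref{regular}: since $N(t)\equiv 1$ and $N(t)\leq 1$ on $\R^+$, one has $\|(u,u_t)(t)\|_{\dot H^1\times L^2}\lesssim 1$ uniformly in $t$. For the nonlinearity, my plan is to estimate it \emph{pointwise} in $t$ by H\"older's inequality (with $\tfrac12=\tfrac1d+\tfrac{d-2}{2d}$), the Hardy--Littlewood--Sobolev inequality (legitimate because $d\geq 4>3$), and the Sobolev embedding $\dot H^1(\R^d)\hookrightarrow L^{\frac{2d}{d-2}}(\R^d)$:
\begin{equation*}
\big\|(|x|^{-3}\ast|u|^2)u(t)\big\|_{L_x^2}\lesssim \big\||x|^{-3}\ast|u|^2\big\|_{L_x^{d}}\,\|u\|_{L_x^{\frac{2d}{d-2}}}\lesssim \|u(t)\|_{L_x^{\frac{2d}{d-2}}}^3\lesssim \|u(t)\|_{\dot H^1}^3\lesssim 1.
\end{equation*}
Integrating over $J$ then gives $\big\|(|x|^{-3}\ast|u|^2)u\big\|_{L_t^1L_x^2(J\times\R^d)}\lesssim 2\delta$, and combining with the Strichartz bound yields $\|u\|_{L_t^2L_x^{2d/(d-3)}(J\times\R^d)}\lesssim 1+\delta\lesssim 1$ for any fixed $\delta>0$.

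The main substantive input---really the only nontrivial step---is Theorem~\ref{regular}: it is precisely the $\dot H^1$-regularity it provides that lets the nonlinearity be placed in the strong norm $L_t^1L_x^2$ without loss. Without that regularity one would instead have to argue by a bootstrap at the critical $\dot H^{1/2}\times\dot H^{-\frac12}$ level, paralleling Lemma~\ref{lemrfiste}, in a space such as $Y=L_t^\infty L_x^{\frac{2d}{d-2}}\cap L_t^2 L_x^{\frac{2d}{d-3}}(J\times\R^d)$, closing by smallness of $\delta$ in the cubic factor; this is available as a backup, but the direct argument above is preferable once Theorem~\ref{regular} is in hand.
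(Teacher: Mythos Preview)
Your argument is correct and uses the same core ingredients as the paper---the Strichartz estimate at the $\dot H^1\times L^2$ level with the forcing placed in $L_t^1L_x^2$, together with H\"older, Hardy--Littlewood--Sobolev, and the Sobolev embedding $\dot H^1\hookrightarrow L^{2d/(d-2)}$. The one structural difference is in how the estimate is closed: the paper sets $Z(J)=L_t^\infty(J;\dot H^1)\cap L_t^2(J;L_x^{2d/(d-3)})$ and runs a bootstrap $\|u\|_{Z(J)}\lesssim 1+\delta\|u\|_{Z(J)}^3$, only invoking Theorem~\ref{regular} at the single time $t_0$ and then choosing $\delta$ small. You instead use the uniform-in-time bound $\|(u,u_t)(t)\|_{\dot H^1\times L^2}\lesssim 1$ (which the paper itself asserts at the start of \S4.2 as a consequence of Theorem~\ref{regular}) to control the nonlinearity pointwise, bypassing the bootstrap entirely. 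Your route is a legitimate simplification once that uniform bound is in hand; the paper's bootstrap is what you called the ``backup'' and is what they actually chose.
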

\begin{proof} Without loss of generality, we assume that $t_0=0$.
 Let
 $$Z(J)=L_t^\infty(J,\dot
H^1)\cap L_t^2(J,L_x^\frac{2d}{d-3}).$$
 We have by Strichartz
estimate and  Theorem \ref{regular}
\begin{align*}
\|u\|_{Z(J)}\lesssim&\| (u_0,u_1)\|_{\dot H^1\times
L^2}+\big\|(|x|^{-3}\ast|u|^2)u\big\|_{L_t^1L_x^2(J\times\R^d)}\\
\lesssim&\|(u_0,u_1)\|_{\dot H^1\times
L^2}+\|u\|_{L_t^3L_x^\frac{2d}{d-2}(J\times\R^d)}^3\\
\lesssim&\|(u_0,u_1)\|_{\dot H^1\times
L^2}+\delta\|u\|_{L_t^\infty(J,\dot H^1)}^3\\
\lesssim& 1+\delta \|u\|_{Z(J)}^3.
\end{align*}
  This ends the proof by choosing $\delta>0$ sufficiently small.
\end{proof}

\vskip 0.2in

Using Lemma \ref{adrlemh2}, we have the small time estimate
\begin{equation}\label{smalltest}
\begin{split}
&\Big\|P_{\leq
k}\Big(\int_0^{\delta}e^{i\tau\sqrt{-\Delta}}\nabla\big((|x|^{-3}\ast|u|^2)u\big)
(\tau)\,\mathrm{d}\tau\Big)\Big\|_{L_x^2}\\
\lesssim&\int_0^\delta\big\|\nabla\big((|x|^{-3}\ast|u|^2)u\big)\big\|_{L_x^2}\,\mathrm{d}\tau\lesssim
\|\nabla u\|_{L_t^\infty
L_x^2}\|u\|_{L_t^2L_x^\frac{2d}{d-3}([0,\delta))}^2\lesssim1.
\end{split}
\end{equation}
On the other hand, when $d\geq5$, by the radial Sobolev embedding:
$\big\||x|^\frac32u\big\|_{L_x^\frac{2d}{d-4}(\R^d)}\lesssim\|u\|_{\dot
H^\frac12(\R^d)},$ the H\"older inequality and the
Hardy-Littlewood-Sobolve inequality, we obtain
\begin{align*}
\Big\|(1-\chi)\Big(\frac{8x}{|t|}\Big)\big(|x|^{-3}\ast(\nabla|u|^2)\big)u
\Big\|_{L_x^2}\lesssim&\Big\|(1-\chi)\Big(\frac{8x}{|t|}\Big)u\Big\|_{L_x^\frac{2d}{d-4}}\big\||x|^{-3}\ast(u\nabla
u)\big\|_{L_x^\frac{d}2}\\
\lesssim&|t|^{-\frac32}\big\||x|^\frac32u\big\|_{L_x^\frac{2d}{d-4}}\|\nabla
u\|_{L_x^2}\|u\|_{L_x^\frac{2d}{d-2}}\\
\lesssim&|t|^{-\frac32}.
\end{align*}
When $d=4$, by the radial Sobolev embedding:
$\big\||x|^\frac43u\big\|_{L_x^\infty(\R^4)}\lesssim\|u\|_{\dot
H^\frac23(\R^4)}$, we obtain
$$\Big\|(1-\chi)\Big(\frac{8x}{|t|}\Big)\big(|x|^{-3}\ast(\nabla|u|^2)\big)u
\Big\|_{L_x^2}\lesssim\Big\|(1-\chi)\Big(\frac{8x}{|t|}\Big)u\Big\|_{L_x^\infty}\big\||x|^{-3}\ast(u\nabla
u)\big\|_{L_x^2}\lesssim|t|^{-\frac43}.$$
 Similarly, when $d\geq5,$
using the radial Sobolev embedding:
$\big\||x|^{\frac32}f\big\|_{L_x^\infty(\R^d)}\lesssim\big\||\nabla|^\frac12
f\big\|_{L_x^\frac{d}2(\R^d)},$ the H\"older inequality and the
Hardy-Littlewood-Sobolve inequality, we get
\begin{align*}
\Big\|(1-\chi)\Big(\frac{8x}{|t|}\Big)\big(|x|^{-3}\ast(|u|^2)\big)\nabla
u \Big\|_{L_x^2(\R^d)}\lesssim&\|\nabla
u\|_{L_x^2}\Big\|(1-\chi)\Big(\frac{8x}{|t|}\Big)\big(|x|^{-3}\ast(|u|^2)\big)\Big\|_{L_x^\infty(\R^d)}\\
\lesssim&|t|^{-\frac32}\Big\||x|^{\frac32}\big(|x|^{-3}\ast(|u|^2)\big)\Big\|_{L_x^\infty(\R^d)}\\
\lesssim&|t|^{-\frac32}\Big\||\nabla|^\frac12\big(|x|^{-3}\ast|u|^2\big)\Big\|_{L_x^\frac{d}2(\R^d)}\\
\lesssim&|t|^{-\frac32}\|\nabla
u\|_{L_x^2(\R^d)}\big\||\nabla|^\frac12u\big\|_{L_x^2(\R^d)},
\end{align*}
and when $d=4$, by
$\big\||x|^\frac43u\big\|_{L_x^\infty(\R^4)}\lesssim\|u\|_{\dot
H^\frac23(\R^4)}$, we have
$$\Big\|(1-\chi)\Big(\frac{8x}{|t|}\Big)\big(|x|^{-3}\ast(|u|^2)\big)\nabla
u \Big\|_{L_x^2(\R^4)}\lesssim |t|^{-\frac43}.$$

\noindent Hence, we have the large time estimate
\begin{equation}\label{largtest}
\begin{split}
&\Big\|P_{\leq
k}\Big(\int_\delta^{T_2}e^{i\tau\sqrt{-\Delta}}(1-\chi)\Big(\frac{8x}{|t|}\Big)
\nabla\big((|x|^{-3}\ast|u|^2)u\big)(\tau)\,\mathrm{d}\tau\Big)\Big\|_{L_x^2(\R^d)}\\
\lesssim&\int_\delta^\infty
\Big\|(1-\chi)\Big(\frac{8x}{|t|}\Big)\nabla\big((|x|^{-3}\ast|u|^2)u\big)\Big\|_{L_x^2(\R^d)}\,\mathrm{d}\tau\\\lesssim&
\begin{cases}
\delta^{-\frac12}\quad \text{if}\quad d\geq5\\
\delta^{-\frac13}\quad \text{if}\quad d=4.
\end{cases}
\end{split}
\end{equation}

In  the same way  as in the proof of \eqref{equbbpgj}, we
deduce that for $2^k\gg1$
\begin{align}\nonumber
&\Big|\int_\delta^{T_2}\int_{T_1}^{-\delta}\Big\langle P_{\leq
k}\Big(e^{it\sqrt{-\Delta}}\nabla\big((|x|^{-3}\ast|u|^2)u\big)
(t)\Big),P_{\leq
k}\Big(e^{i\tau\sqrt{-\Delta}}\nabla\big((|x|^{-3}\ast|u|^2)u\big)(\tau)\Big)
 \Big\rangle_{L_x^2}\,\mathrm{d}t\,\mathrm{d}\tau\Big|\\\label{zeroh}
 \lesssim&1.
\end{align}
Using the same argument as deriving
\eqref{qmv0gj}, we obtain \eqref{qmvh2es} by making use of \eqref{smalltest}, \eqref{largtest} and
\eqref{zeroh} to replace  \eqref{corddd3drw},
\eqref{yuxianxiao3drw} and \eqref{equbbpgj}. Therefore, we conclude
Proposition \ref{propgdzzx}.

\vskip 0.5in

{\bf Acknowledgements:} The authors would like to express their
gratitude to  the anonymous referee for their invaluable comments
and suggestions.  C. Miao was supported by the NSFC under grant
No.11171033 and 11231006, and by Beijing Center of Mathematics and
Information  Interdisciplinary Science. J. Zhang was supported by
the Beijing Natural Science Foundation(1144014) and National Natural
Science Foundation of China (11401024), Excellent young scholars
Research Fund of Beijing Institute of Technology. J. Zheng was
partly supported by the European Research Council, ERC-2012-ADG,
project number 320845: Semi-Classical Analysis of Partial
Differential Equations.

\begin{center}

\end{center}

\end{document}